\tikzstyle{every node}=[circle, draw, inner sep=0pt, minimum width=4pt]
\newcommand{\cmark}{\ding{51}}%
\newcommand{\xmark}{\ding{55}}%
\newtheorem{theorem}{Theorem}
\newtheorem{lemma}{Lemma}
\newtheorem{claim}{Claim}
\title{Uniquely restricted matchings in subcubic graphs \\ without short cycles}
\author{M. F\"{u}rst  \and D. Rautenbach}
\date{}
\begin{document}
%\onehalfspace
\maketitle
\begin{center}
{\small 
Institute of Optimization and Operations Research, Ulm University, Germany\\
\texttt{maximilian.fuerst,dieter.rautenbach@uni-ulm.de}\\[3mm]
}
\end{center}

\begin{abstract}
A matching $M$ in a graph $G$ is uniquely restricted if no other matching in $G$
covers the same set of vertices. We prove that any connected subcubic graph
with $n$ vertices and girth at least $5$ contains a uniquely restricted matching of size at least
$(n-1) / 3$ except for two exceptional cubic graphs of order $14$ and $20$.
\end{abstract}
{\small 
\begin{tabular}{lp{13cm}}
{\bf Keywords:} Matching; uniquely restricted matching; subcubic; girth 
\end{tabular}
} \\
{\small 
\begin{tabular}{lp{13cm}}
{\bf AMS subject classification:} 05C70
\end{tabular}
}
\clearpage
\section{Introduction}
We consider simple, finite, and undirected graphs, and 
use standard terminology.
A matching $M$ in a graph $G$ is \textit{uniquely restricted} \cite{ghl} if 
no other matching in $G$ covers the same set of vertices.
The maximum cardinalities of a matching and a uniquely
restricted matching in $G$ will be denoted by 
$\nu(G)$ and $\nu_{ur}(G)$,
respectively.
While determining the matching number is tractable \cite{lp},
determining the uniquely restricted matching number is NP-hard \cite{ghl}.
In the present paper, we establish a tight lower bound on the uniquely restricted
matching number in subcubic graphs of girth at least $5$.
The search for bounds in graphs of bounded degree and large girth is a common task 
for various hard graph invariants, cf. \cite{cdr, fh, ht, hr, lprr, s}.
In \cite{fr} we proved that 
\begin{align} \label{motivation}
 \nu_{ur}(G) \geq \frac{n(G)-1}{\Delta}
\end{align}
for a connected graph $G$
of order $n(G)$,
maximum degree at most $\Delta$ for some $\Delta\geq 4$,
and girth at least $5$,
and we conjectured that (\ref{motivation}) also holds for subcubic graphs,
which was recently confirmed for graphs of girth at least $7$ \cite{fhr}.
In the present paper, we verify our conjecture except for two small graphs     
illustrated in Figure \ref{fig_excub}.
\begin{figure}[H] 
 \begin{minipage}[b]{0.49\textwidth}
 \centering\tiny
 \begin{tikzpicture}[scale = 1.2]
	    \node (u1) at (0,0) {};
	    \node (u2) at (2,0) {};
	    \node (u3) at (1,2) {};
	    \node (u4) at (1,0.75) {};
	    \node (v1) at (1,0) {};
	    \node (v2) at (1.5,1) {};
	    \node (v3) at (0.5,1) {};
	    \node (v4) at (0.5,0.375) {};
	    \node (v5) at (1.5,0.375) {};
	    \node (v6) at (1,1.375) {};
	    
	    \node (w1) at (0,2) {};
	    \node  (w2) at (2,2) {};
	    \node  (w3) at (2,1.375/2) {};
	    
	    \node (x) at (2.5,2.5) {};

	    \foreach \from/\to in {u1/v1, u2/v2, u3/v3, v1/u2, v2/u3, v3/u1, v4/u4, v5/u4, v6/u4, w1/v4, w2/v5, x/w2, x/w3}
	    \draw [-] (\from) -- (\to);

	    \foreach \from/\to in {u1/v4, u2/v5, u3/v6, w1/x}
	    \draw [-, dashed] (\from) -- (\to);
	    
	    \draw[-] (w2) to[out=-120,in=15] (v3);
	    \draw[-] (w1) to[out=-60,in=165] (v2);
	    \draw[-] (w3) to[out=-120,in=15] (v1);
	    \draw[-] (w3) to[out=120,in=-15] (v6);
\end{tikzpicture}
 \end{minipage}
 \begin{minipage}[b]{0.49\textwidth}
 \centering\tiny
 \begin{tikzpicture}[scale = 1.2]
	    \foreach \i in {0,1,2,3,4,5} {
	    \node (v\i) at (0+\i/2,0.25) {};
	    \node (u\i) at (0+\i/2,1.75) {};
	    \node (w\i) at (0+\i/2,1) {};
	    }

	    \foreach \i in {0,2,4} {
	    \draw[-] (w\i) -- (v\i);
	    \draw[-,dashed] (w\i) -- (u\i);
	    }

	    \foreach \i in {1,3,5} {
	    \draw[-] (w\i) -- (v\i);
	    \draw[-,dashed] (w\i) -- (u\i);
	    }
	    
	    \foreach \i/\j in {0/1, 1/2, 2/3, 3/4, 4/5} {
	    \draw[-] (v\i) -- (v\j);
	    \draw[-] (u\i) -- (u\j);
	    }
	    
	    \node (z) at (1.25,1.375) {};
	    \foreach \i in {0,2,4} 
	    \draw[-] (z) -- (w\i);
	    
	    \node (x) at (1.75,0.625) {};
	    \foreach \i in {1,3,5} 
	    \draw[-] (x) -- (w\i);
	    
	    \draw[-] (v0) to[out=-22.5,in=-157.5] (v5);
	    \draw[-] (u0) to[out=22.5,in=157.5] (u5);

\end{tikzpicture}
 \end{minipage}
 \caption{The exceptional cubic graphs $H_1$ and $H_2$. The dashed edges illustrate a maximum uniquely restricted matching.
 Note that the dashed edges belong to the graph.} \label{fig_excub}
 \end{figure}
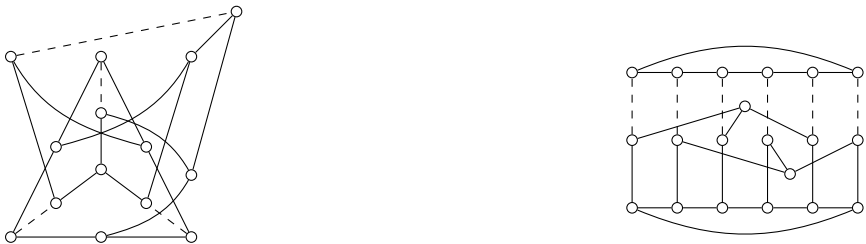 
Note that $H_1$ and $H_2$ have uniquely restricted matching number 
$4=\frac{n(H_1)-2}{3}$ and $6=\frac{n(H_2)-2}{3}$, respectively.
We prove the following.

\begin{theorem} \label{t1}
 If $G$ is a connected subcubic graph of girth at least $5$ that is not isomorphic to $H_1$ or $H_2$, then
 \begin{displaymath}
  \nu_{ur}(G) \geq \frac{n(G)-1}{3}.
 \end{displaymath}
\end{theorem}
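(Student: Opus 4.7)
The plan is to argue by induction on $n(G)$, equivalently by considering a counterexample $G$ with $n(G)$ minimum. By inequality (\ref{motivation}) the bound is already known when $\Delta(G)\geq 4$, and when $\Delta(G)\leq 2$ the graph $G$ is a path or a cycle of length at least $5$ for which selecting every third edge yields the bound directly. Hence we may assume $\Delta(G)=3$. Throughout I use the standard characterization that a matching $M$ in $G$ is uniquely restricted if and only if $G$ contains no $M$-alternating cycle, equivalently, $M$ is the unique perfect matching of $G[V(M)]$.

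The reduction scheme is the following. Locate a subgraph $H\subseteq G$ of bounded size such that the graph $G':=G-V(H)$ is connected (or decomposes into a few components to be handled separately), subcubic, of girth at least $5$, strictly smaller than $G$, and, crucially, not isomorphic to $H_1$ or $H_2$. By minimality of $G$, the inductive hypothesis produces a uniquely restricted matching $M'$ in $G'$ with $|M'|\geq (n(G')-1)/3$. One then extends $M'$ by roughly $|V(H)|/3$ suitably chosen edges from within or incident to $H$, verifying that the resulting matching $M$ is still uniquely restricted. The girth hypothesis is used here to control which $M$-alternating cycles could possibly appear through $V(H)$.

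The main case analysis is on $\delta(G)$. The cases $\delta(G)\leq 2$ are handled by peeling off a short pendant path or thread of degree-two vertices; these cases are routine but require distinguishing the possible lengths and attachment types. The heart of the argument is the cubic case: fix an edge $uv$, so that by girth at least $5$ the set $N(u)\cup N(v)$ consists of six distinct vertices, whose further neighborhoods spread out even more. A natural candidate for $H$ is the closed neighborhood of $\{u,v\}$, possibly augmented by one or two vertices of the next level to handle degenerate configurations; after removing $H$ and applying induction to $G'$, one extends $M'$ by two edges inside $H$ chosen so that each new edge has a ``private'' endpoint not threatening any alternating cycle.

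The main obstacle is twofold. First, one must show that whenever the natural reduction produces a graph isomorphic to $H_1$ or $H_2$, an alternative reduction is available; the two exceptional graphs of the theorem are exactly those in which no such alternative exists, so delineating this boundary drives the bulk of the case analysis and is where the detailed structure of $H_1$ and $H_2$ in Figure \ref{fig_excub} enters. Second, the extension step requires a careful verification that no $M$-alternating cycle is created: any such cycle must use at least one new edge, alternate between $V(H)$ and $V(G')$, and the girth-$\geq 5$ hypothesis sharply constrains the possibilities. Together these two points make the cubic case the principal technical challenge.
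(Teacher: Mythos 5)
There is a genuine gap: you induct on the literal statement of Theorem \ref{t1}, with only $H_1$ and $H_2$ excluded, and that hypothesis is too weak to close your main cases. In the cubic case, deleting a vertex $u$ and applying the bound to $G-u$ gives only $\nu_{ur}(G)\geq\frac{n(G)-2}{3}$, which is short of $\frac{n(G)-1}{3}$; to gain an edge at $u$ you would need a maximum uniquely restricted matching of $G-u$ that avoids all three neighbours of $u$, and the bare inductive bound gives no control whatsoever over which vertices such a matching can be made to miss. The same difficulty reappears in the minimum-degree-two case whenever the reduced graph $G-\{u,v,w\}$ is disconnected or tight: you then need to add at least two edges, hence again need maximum uniquely restricted matchings of the smaller pieces avoiding prescribed vertices. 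Crucially, the graphs for which this avoidance can fail are not governed by $H_1$ and $H_2$ alone: the bound $\frac{n-1}{3}$ is attained with equality by an infinite family (the paper's class $\mathcal{G}$, obtained by attaching the nine blocks $G_1,\dots,G_9$ of $\mathcal{B}$, starting from the subdivided $K_4$, to Gallai--Edmonds-tight trees), so your claim that the two exceptional graphs of the theorem are exactly the configurations admitting no alternative reduction is false, and the case analysis you sketch has no reason to terminate.

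The paper's remedy, which your proposal does not anticipate, is to strengthen the statement before inducting: Theorem \ref{t3} gives $\nu_{ur}\geq n/3$ per component except for cubic components, components in $\mathcal{G}$, and $H_1,H_2$. The extra $\frac13$ of slack for non-exceptional reduced graphs, combined with avoidance lemmas for the tight family (Lemma \ref{l2} for the host trees, Lemmas \ref{l6} and \ref{l7} for $\mathcal{G}$, resting on the computer-verified data in Table \ref{table1}), is exactly what makes the extension step work; the exceptions $H_1$ and $H_2$ then emerge precisely where Lemma \ref{l7} fails, namely when $G-u$ is isomorphic to $G_2$ or $G_4$ in Claim \ref{c4}. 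So while your overall template (minimum counterexample, delete a bounded piece, extend a uniquely restricted matching, use girth to rule out alternating cycles) matches the paper's, the central missing idea is the strengthened inductive statement together with the explicit description of the extremal family via the Gallai--Edmonds structure theorem; without some substitute for that machinery the proposed induction does not go through.
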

In \cite{fhr} Theorem \ref{t1} was proved for graphs of girth at least $7$.
For this, the authors proved that any subcubic graph $G$ of girth at least $7$ 
that is not cubic and not a tree
satisfies
\begin{align} \label{motivation2}
 \nu_{ur}(G) \geq \frac{n(G)}{3}.
\end{align}
We will prove Theorem \ref{t1} with a similar approach.
However, (\ref{motivation2}) does not hold for graphs of girth at least $5$.
Consider, for example, the graph $G_1$ that arises from a $K_4$ by
subdividing each edge once.
Hence, we need to enlarge the family of graphs that we exclude.
For the definition of this family of graphs,
we recall the Gallai-Edmonds Structure Theorem, cf. \cite{lp}.

\begin{theorem}[Gallai-Edmonds Structure Theorem] \label{t2}
 Let $G$ be a graph, let $D_G$ be the set of vertices that are not covered by some maximum 
 matching in $G$, let $A_G = N_G(D_G) \setminus D_G$, and let $C_G = V(G) \setminus (A_G \cup D_G)$.
 The following statements hold.
 \begin{enumerate}
 \item[(i)] $D_{G-x} = D_G$ for every vertex $x$ in $A_G$.
  \item[(ii)] Each maximum matching in $G$ contains a perfect matching of $G[C_G]$,
  a maximum matching of each component of $G[D_G]$,
  and a matching connecting each vertex in $A_G$ to a vertex in $D_G$.
  \item[(iii)] Each component $H$ of $G[D_G]$ is factor-critical, 
  that is, $H-x$ contains a perfect matching for every $x$ in $V(H)$.
  \item[(iv)] $\nu(G) = \frac{1}{2} (n(G) + |A_G| - c(G[D_G])$,
  where $c(G[D_G])$ denotes the number of components of the graph $G[D_G]$.
 \end{enumerate}
\end{theorem}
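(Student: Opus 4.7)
The plan is to deduce the theorem from two classical building blocks: the Tutte--Berge formula $\nu(G)=\frac{1}{2}\bigl(n(G)-\max_{S\subseteq V(G)}(o(G-S)-|S|)\bigr)$, where $o(\cdot)$ counts odd components, and Gallai's Lemma, stating that a connected graph in which every vertex is missed by some maximum matching is factor-critical. The principal technical device is the Berge--Edmonds alternating-tree analysis of maximum matchings, used together with symmetric-difference manipulations.

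\textbf{Part (iii).} I would first establish factor-criticality of each component $H$ of $G[D_G]$. For $v\in V(H)$, pick a maximum matching $M_v$ of $G$ missing $v$ and grow the $M_v$-alternating tree rooted at $v$. A standard induction shows that even-level vertices lie in $D_G$ and odd-level vertices in $A_G$. From this, each edge of $M_v$ with one endpoint $u\in V(H)\setminus\{v\}$ and the other endpoint in $A_G$ can be traded, via an alternating-path swap supplied by the tree, for an edge inside $H$, keeping the matching maximum and still missing $v$. Iterating yields a maximum matching $M_v^{\ast}$ of $G$ missing $v$ whose restriction to $E(H)$ is a perfect matching of $H-v$; since $v$ was arbitrary, $H$ is factor-critical (this can alternatively be packaged via Gallai's Lemma applied to $H$ once one argues that every vertex of $H$ is uncovered by some maximum matching of $H$).

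\textbf{Parts (i), (ii), and (iv).} For $x\in A_G$, fix $y\in D_G$ adjacent to $x$ and the matching $M_y^{\ast}$ above: $x$ occurs at odd level in the tree, matched by $M_y^{\ast}$ to some $y'\in D_G$, so removing $xy'$ yields a matching of size $\nu(G)-1$ in $G-x$. Equality $\nu(G-x)=\nu(G)-1$ is forced because $\nu(G-x)=\nu(G)$ would produce a maximum matching of $G$ missing $x$, contradicting $x\notin D_G$. A short symmetric-difference argument then shows that every $z\in D_G$ is missed by some maximum matching of $G-x$, giving $D_G\subseteq D_{G-x}$, hence (i). By (iii) the components of $G[D_G]$ are odd, so Tutte--Berge with $S=A_G$ bounds $\nu(G)\leq\frac{1}{2}(n(G)+|A_G|-c(G[D_G]))$; constructing a matching attaining this bound --- near-perfect matchings in each $G[D_G]$-component, a system of distinct representatives matching $A_G$ into distinct $G[D_G]$-components (produced by a Hall-type argument using (i)), and a perfect matching of $G[C_G]$ (whose existence follows once one observes via (i) that no maximum matching of $G$ crosses from $A_G$ into $C_G$) --- yields (iv), and the tightness of each inequality gives the structural assertions in (ii).

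\textbf{Main obstacle.} The crux is the ``leak removal'' step in part (iii): starting from a maximum matching of $G$ that misses $v\in V(H)$ but may match several other vertices of $H$ into $A_G$, one must reroute the matching through alternating paths inside $H$ to eliminate all such crossings without uncovering any new vertex. Making this rigorous --- either through a direct tree-based construction or an inductive application of Gallai's Lemma to $H$ --- is where the majority of the technical work lies. Once (iii) is in place, parts (i), (ii), and (iv) follow by comparatively routine bookkeeping with the Tutte--Berge formula, Hall's theorem, and symmetric-difference swaps.
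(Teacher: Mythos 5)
The paper contains no proof of this statement: Theorem \ref{t2} is the classical Gallai--Edmonds Structure Theorem, quoted as a known result with a pointer to Lov\'{a}sz and Plummer \cite{lp}, and used as a black box thereafter. So there is no proof in the paper to compare yours against; your attempt can only be judged on its own merits.

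Judged that way, it has genuine gaps. First, the level claim on which everything rests --- ``even-level vertices lie in $D_G$ and odd-level vertices in $A_G$'' --- is false as stated, because of blossoms: in a triangle $v,a,b$ with edge $ab$ matched and $v$ exposed, both $a$ and $b$ occur at odd \emph{and} even levels of the tree rooted at $v$, while $D_G=\{v,a,b\}$ and $A_G=\emptyset$. The correct dichotomy is between vertices reachable from an exposed vertex by an even alternating path (these form $D_G$) and vertices reachable only by odd alternating paths (these form $A_G$), and making that precise essentially forces the blossom-shrinking machinery you are implicitly avoiding. Second, the ``leak removal'' step in (iii) is not a deferrable detail; you yourself call it the crux, and the device you propose does not obviously work: flipping an even alternating path from $v$ to a vertex $u \in V(H)$ that is matched into $A_G$ produces a maximum matching missing $u$ --- it does not re-match $u$ inside $H$. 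Third, for (i) you only sketch the easy inclusion $D_G \subseteq D_{G-x}$, but the statement asserts equality, and the direction $D_{G-x} \subseteq D_G$ is the substantive one. Finally, your fallback for (iii) --- apply Gallai's Lemma to $H$ ``once one argues that every vertex of $H$ is uncovered by some maximum matching of $H$'' --- is circular in your ordering, since that fact is normally derived from the stability lemma (i), which you prove \emph{after} (iii). The standard clean route reverses your order: prove (i) first by an augmenting-path/symmetric-difference argument, iterate it to conclude $D_{G-A_G}=D_G$ and $\nu(G-A_G)=\nu(G)-|A_G|$, deduce that every vertex of each component $H$ of $G[D_G]$ is missed by a maximum matching of $H$ itself, apply Gallai's Lemma to get (iii), and then obtain (ii) and (iv) by counting against the Tutte--Berge bound with $S=A_G$. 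If a self-contained proof were actually required here, that is the path to take; in the paper as written, the citation to \cite{lp} is all that is needed.
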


Note that, for every tree $T$, the components of $G[D_T]$ are isolated vertices,
because every factor-critical graph is bridgeless.
Let $T$ be a tree such that each vertex in 
$A_T$ has degree at most $3$.
Let the forest $T'=T[A_T \cup D_T]$ have $\kappa$ components,
that is, $T'$ has $|A_T|+|D_T|-\kappa$ edges.
If $\kappa=1$, then $T'$ has at most $3|A_T|$ edges,
and, if $\kappa\geq 2$, then 
every component of $T'$ sends an edge to $C_T$ within $T$,
and $T'$ has at most $3|A_T|-\kappa$ edges.
It follows that $|D_T|\leq 2|A_T|+1$ 
with equality only if $\kappa=1$, 
every vertex in $A_T$ has degree $3$,
$A_T$ is independent, 
and $C_T$ is empty.
By Theorem \ref{t2}, we obtain
\begin{eqnarray}
 \nu(T) &\geq &\max\left\{|A_T|, \frac{1}{2} (n(T) + |A_T| - |D_T|)\right\} \label{ed1}\\
	&\geq &\max\left\{|A_T|, \frac{1}{2} (n(T) - |A_T| - 1)\right\} \label{ed2}\\
	&\geq &\frac{1}{3} |A_T| + \frac{2}{6} (n(T) - |A_T| - 1) \label{ed3}\\
	&= &\frac{n(T)-1}{3}.\nonumber
\end{eqnarray}
Let $\mathcal{T}$ be the set of all trees $T$
with matching number $\frac{n(T)-1}{3}$
such that each vertex in $A_T$ has degree at most $3$.
Note that there are infinitely many such trees, cf. e.g. \cite{hy}.

The set of graphs $\mathcal{B}$ contains $G_1$ and $8$ more graphs,
which will be defined in the next section.
At the moment, it is only important that those graphs
have maximum degree $3$, minimum degree $2$,
and between $2$ and $6$ vertices of degree $2$.
Let $H$ be a graph in $\mathcal{B}$, 
and let $u_1,\ldots,u_k$ be the vertices of degree $2$ in $H$.
For a tree $T$ in $\mathcal{T}$,
let $u$ be some vertex in $D_T$ with neighbors $v_1,\ldots,v_\ell$ in $A_T$,
and suppose that $k \geq \ell$.
We say that a graph $G$ arises from $T$
{\it by replacing $u$ by $H$}
if $G$ arises from the disjoint union of $T-u$
and $H$ by adding the edges 
$u_{i_1}v_1,\ldots,u_{i_\ell}v_\ell$
for some $\ell$ distinct indices $i_1,\ldots,i_\ell$ in $[k]$,
where $[k]$ denotes the set of positive integers at most $k$.

Let $\mathcal{G}$ be the set of graphs $G$
that arise from a tree $T$ in $\mathcal{T}$
by replacing some vertices in $D_T$ by graphs in $\mathcal{B}$,
see Figure \ref{fig_exexceptional}
for an illustration. 
We call $T$ the host tree of $G$.
Furthermore, let $\mathcal{G}_3$ be the set of cubic graphs of girth at least $5$.

\begin{figure}[H] 
 \centering\tiny
 \begin{tikzpicture}[scale = 1.2]
	    \node  (u1) at (0,0) {};
	    \node  (u2) at (2,0) {};
	    \node  (u3) at (1,2) {};
	    \node  (u4) at (1,0.75) {};
	    \node  (v1) at (1,0) {};
	    \node  (v2) at (1.5,1) {};
	    \node  (v3) at (0.5,1) {};
	    \node  (v4) at (0.5,0.375) {};
	    \node  (v5) at (1.5,0.375) {};
	    \node  (v6) at (1,1.375) {};
	    
	    \foreach \from/\to in {u1/v1, v1/u2, v2/u2, v2/u3, v3/u1, v3/u3, v4/u1, v4/u4, v5/u2, v5/u4, v6/u3, v6/u4}
	    \draw [-] (\from) -- (\to);
 
 \node (u) at (4.375,2) {};
 
 \foreach \i [evaluate=\i as \x using \i/2] in {0,1,...,5}
 \node (v\i) at (3.125+\x,1) {};
 
 \foreach \i in {0,1,...,5}
 \draw[-] (u) -- (v\i);
 
 \foreach \i [evaluate=\i as \x using \i/4] in {0,1,2,...,11}
 \node (u\i) at (3+\x,0) {};
 
 \foreach \i/\j in {0/0,0/1, 1/2, 1/3, 2/4, 2/5, 3/6, 3/7, 4/8, 4/9, 5/10, 5/11}
 \draw[-] (v\i) -- (u\j);
 
 \draw[-latex] (6.5,1) -- (8,1);

 	    \foreach \i in {0,1,2,3,4,5} {
	    \node (a\i) at (8.875+0.375/2+\i*0.75,2) {};	    
	    }
	    
	    \node (b0) at (8.875+0.375/2,1.33) {};
	    \node (b1) at (8.875+1.5+0.375/2,1.33) {};
	    \node (b2) at (8.875+3+0.375/2,1.33) {};	    
	    
	    \foreach \i/\j in {0/0,1/2,2/4} {
	    \draw[-] (b\i) -- (a\j);
	    }	    
	    
	    \foreach \i/\j in {0/1, 1/2, 2/3, 3/4, 4/5} {
	    \draw[-] (a\i) -- (a\j);
	    }
	    
	    \node (c) at (8.875+0.375/2+0.75+0.375,1.66) {};
	    \foreach \i in {0,1,2} 
	    \draw[-] (c) -- (b\i);
	    
	    \draw[-] (a0) to[out=22.5,in=157.5] (a5);
	    
	    \foreach \i [evaluate=\i as \x using \i*0.75] in {0,1,...,5}
	    \node (d\i) at (8.875+0.375/2+\x,0.66) {};
	    
	    \draw[-] (b0) -- (d0);
	    \draw[-] (a1) -- (d1);
	    \draw[-] (b1) -- (d2);
	    \draw[-] (a3) -- (d3);
	    \draw[-] (b2) -- (d4);
	    \draw[-] (a5) -- (d5);

	    \foreach \i [evaluate=\i as \x using \i*0.375] in {0,1,2,...,11}
	    \node (e\i) at (8.875+\x,0) {};

	    \foreach \i/\j in {0/0,0/1, 1/2, 1/3, 2/4, 2/5, 3/6, 3/7, 4/8, 4/9, 5/10, 5/11}
            \draw[-] (d\i) -- (e\j);

\end{tikzpicture}
 \caption{Some graphs in $\mathcal{B}$, $\mathcal{T}$, and $\mathcal{G}$.} \label{fig_exexceptional}
 \end{figure}
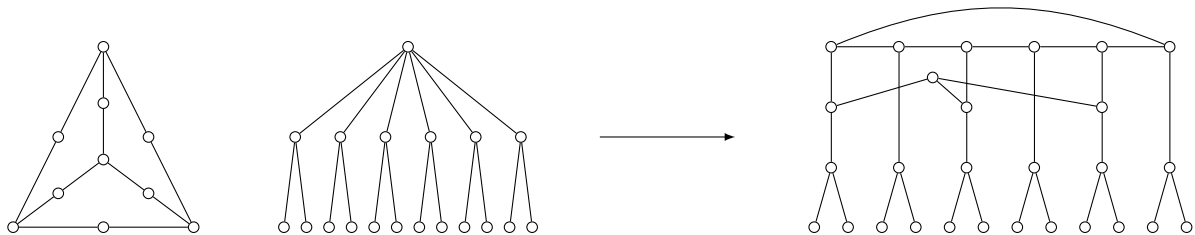 
 
For a set of connected graphs $\mathcal{H}$,
let $\kappa_{G}(\mathcal{H})$
be the number of components of 
$G$ isomorphic to a graph in $\mathcal{H}$. 
Theorem \ref{t1} is a consequence of the following more precise statement.

 \begin{theorem} \label{t3}
If $G$ is a subcubic graph of girth at least $5$, then
 \begin{displaymath}
  \nu_{ur}(G) \geq \frac{n(G)-\kappa_{G}(\mathcal{G}) - \kappa_{G}(\mathcal{G}_3) - \kappa_{G}(\{H_1,H_2\})}{3}.
 \end{displaymath}
\end{theorem}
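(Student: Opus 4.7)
The plan is to argue by strong induction on $n(G)$. If $G$ is disconnected the bound follows by applying the induction hypothesis to each component and summing, because $n(G)$ and each of $\kappa_G(\mathcal{G})$, $\kappa_G(\mathcal{G}_3)$, $\kappa_G(\{H_1,H_2\})$ are additive over components. We therefore reduce to the connected case and must show $\nu_{ur}(G)\ge (n(G)-\varepsilon)/3$, where $\varepsilon=1$ when $G\in\mathcal{G}\cup\mathcal{G}_3\cup\{H_1,H_2\}$ and $\varepsilon=0$ otherwise.

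If $G$ is a tree, then $G$ contains no cycle at all, so every matching is uniquely restricted and $\nu_{ur}(G)=\nu(G)$. The Gallai--Edmonds computation leading to (\ref{ed1})--(\ref{ed3}) already yields $\nu(G)\ge (n(G)-1)/3$, with equality exactly when $G\in\mathcal{T}\subseteq\mathcal{G}$. Non-extremal trees satisfy the strict bound $\nu(G)\ge n(G)/3$, completing the tree case.

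If $G$ is connected, not a tree, and lies in $\mathcal{G}\cup\mathcal{G}_3\cup\{H_1,H_2\}$, we construct a uniquely restricted matching of size at least $(n(G)-1)/3$ directly. For $H_1,H_2$ this is the dashed matching in Figure~\ref{fig_excub} once one checks it admits no alternating cycle. For $G\in\mathcal{G}$, we start from a maximum (and, being a tree, uniquely restricted) matching of the host tree $T\in\mathcal{T}$ and extend it inside each implanted block $H\in\mathcal{B}$ by an internal matching chosen so that no alternating cycle passes through the interface edges; since the blocks in $\mathcal{B}$ are low-order with controlled degree-$2$ boundary, this extension can be done block by block. For $G\in\mathcal{G}_3$ we combine the induction hypothesis on $G$ minus the vertices of a short cycle with an ad-hoc treatment of the smallest cubic girth-$5$ configurations.

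The main case is $G$ connected, not a tree, and $G\notin\mathcal{G}\cup\mathcal{G}_3\cup\{H_1,H_2\}$, where we need $\nu_{ur}(G)\ge n(G)/3$. We fix a minimum counterexample and search for a small set $X\subseteq V(G)$ of size $2$ or $3$ such that $G':=G-X$ is again a subcubic girth-$\ge 5$ graph, the induction hypothesis yields a uniquely restricted matching $M'$ of $G'$, and $M'$ can be extended by exactly one edge meeting $X$ to a uniquely restricted matching of $G$. Typical candidates for $X$ are a pendant edge; a degree-$2$ vertex together with a suitable neighbour or short attached path; or a vertex on a shortest cycle together with one or two of its neighbours. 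Each such reduction must satisfy two requirements: the new edge, together with $M'$, must create no $M$-alternating cycle (which the girth bound of $5$ and the degree bound of $3$ keep under control), and the reduction budget must absorb any new exceptional components of $G'$.

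The principal obstacle, and the reason for enlarging the forbidden family from $\{H_1,H_2\}$ to $\mathcal{G}\cup\mathcal{G}_3\cup\{H_1,H_2\}$, is bookkeeping the exceptional components produced in $G'=G-X$: each fresh component of $G'$ inside $\mathcal{G}\cup\mathcal{G}_3\cup\{H_1,H_2\}$ costs an additional $1/3$ on the right-hand side. The heart of the proof is a finite case analysis showing that whenever no choice of $X$ avoids this extra cost, $G$ itself must belong to $\mathcal{G}\cup\mathcal{G}_3\cup\{H_1,H_2\}$, contradicting the choice of counterexample. This is exactly the role of $\mathcal{B}$: its nine members are precisely the subcubic girth-$\ge 5$ blocks that, when attached to a tree $T\in\mathcal{T}$ at a $D_T$-vertex, preserve the extremal ratio $(n-1)/3$, and a careful degree-sequence and short-cycle analysis around $X$ must confirm that no other irreducible obstruction exists.
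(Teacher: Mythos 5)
Your outline reproduces the paper's general strategy (minimum counterexample, deleting a small vertex set, charging for exceptional components of the remainder), but it stops exactly where the actual proof begins, so there is a genuine gap. The whole difficulty lies in the machinery you defer to ``a careful degree-sequence and short-cycle analysis'': the paper needs (a) stability lemmas for the exceptional family, namely that for $G\in\mathcal{G}$ one can delete any set of at most two (Lemma~\ref{l6}), and under mild conditions three (Lemma~\ref{l7}), vertices taken from $D_T'$ and the degree-$2$ vertices without decreasing $\nu_{ur}$, which in turn rests on Hall-type counting in the host tree (Lemma~\ref{l2}), on the contraction lemmas~\ref{l3} and~\ref{l4} showing $\mathcal{T}$ is closed under absorbing the obstructing configurations, and on the computer-verified properties of the nine blocks in Table~\ref{table1}; and (b) a specific reduction, not the generic ``size $2$ or $3$ set $X$'' you propose: the paper deletes a degree-$2$ vertex $u$ together with both neighbours $v,w$, proves the remainder is a single graph in $\mathcal{G}$ with $d(v)+d(w)=6$ (Claims~\ref{c5},~\ref{c6}), and then the decisive step is Claim~\ref{c7} onward, a case analysis of how the four second-neighbours $X$ meet the blocks $H_i$, where each failure of the stability lemmas is converted, via Lemmas~\ref{l3} and~\ref{l4}, into the conclusion that $G$ itself lies in $\mathcal{G}$ or contains a forbidden block $G_5,\ldots,G_9$ (excluded by Claim~\ref{c3}). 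Your proposal asserts as fact that the nine members of $\mathcal{B}$ are ``precisely'' the irreducible obstructions, but neither the paper nor your argument proves such a completeness statement; the paper only needs, and only proves, that these configurations suffice to absorb every dead end of the chosen reduction, and that verification is the bulk of the work you have not supplied.

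Two further specific weaknesses: your treatment of the cubic case (``induction on $G$ minus the vertices of a short cycle plus ad-hoc treatment'') is not workable as stated, since deleting a $5$-cycle can leave several components, any of which may be exceptional, and the matching chosen on the cycle must be shown to create no alternating cycle with the rest; the paper instead deletes a single non-cutvertex and, when the remainder lies in $\mathcal{G}$, invokes Lemma~\ref{l7} to find a uniquely restricted matching missing all three former neighbours, which is only possible because girth $5$ forbids a common neighbour of those three vertices. Similarly, your claim that for $G\in\mathcal{G}$ the host-tree matching ``can be extended block by block'' is exactly the content of Lemma~\ref{l5} and requires property $(i)$ of Table~\ref{table1} (each block has a maximum uniquely restricted matching avoiding any one prescribed attachment vertex), which must be established, not assumed.
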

Since every graph in $\mathcal{B}$ has at least two vertices of degree $2$,
Theorem \ref{t1} follows immediately from Theorem \ref{t3} and
$\mathcal{G} \cap \mathcal{G}_3 = \emptyset$.

We close the introduction with a few basic results and notations.
For a matching $M$ in a graph $G$, 
let $V(M)$ be the set of vertices that are covered by $M$.
Golumbic, Hirst, and Lewenstein \cite{ghl} observed that a matching $M$ in a graph $G$ 
is uniquely restricted 
if and only if there is no $M$-alternating cycle in $G$.
For a set $X \subseteq V(G)$, let 
$E_G(X) = \{uv \in E(G) \mid u \in X \mbox{ and } v \in V(G)\setminus X\}$,
and let $m_G(X) = |E_G(X)|$.

\section{The exceptional graph family}
We start with some properties of the trees in $\mathcal{T}$.

\begin{lemma} \label{l1}
If $T$ is in $\mathcal{T}$, then
$C_T = \emptyset$, 
$A_T$ is independent,
each vertex in $A_T$ has degree exactly $3$ in $T$,
and $\nu(T) = |A_T|$.
\end{lemma}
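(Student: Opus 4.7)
The plan is to read off all four conclusions from the equality cases of the chain of inequalities (\ref{ed1})--(\ref{ed3}) already derived in the introduction. Since $T\in\mathcal{T}$ means $\nu(T)=\frac{n(T)-1}{3}$, the chain must collapse to equalities throughout, and each such equality will pin down one of the asserted structural properties. I expect no real obstacle; the work is purely bookkeeping of when each inequality is tight.

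First I would look at the passage (\ref{ed2})$\to$(\ref{ed3}). This step bounds $\max\{|A_T|,\frac12(n(T)-|A_T|-1)\}$ from below by the convex combination $\frac13|A_T|+\frac26(n(T)-|A_T|-1)$, which equals $\frac{n(T)-1}{3}$. Since the maximum of two reals equals a strict convex combination of them only when the two reals coincide, equality forces
\[
|A_T|=\tfrac{1}{2}(n(T)-|A_T|-1)=\tfrac{n(T)-1}{3}.
\]
Combined with the bound $\nu(T)\ge|A_T|$ from (\ref{ed1}), this already yields $\nu(T)=|A_T|$.

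Next I would look at (\ref{ed1})$\to$(\ref{ed2}). Equality here requires $|D_T|=2|A_T|+1$, and the discussion preceding (\ref{ed1}) in the introduction shows that this extremal case forces $\kappa=1$, that every vertex in $A_T$ has degree exactly $3$ in $T$, that $A_T$ is independent, and that $C_T=\emptyset$. These are precisely the remaining three conclusions of the lemma. (To make the argument self-contained one checks again that every component of $T[D_T]$ is a single vertex—as remarked before the statement of $\mathcal T$, a factor-critical subgraph of a tree cannot contain an edge—so that the formula of Theorem~\ref{t2}(iv) becomes $\nu(T)=\frac12(n(T)+|A_T|-|D_T|)$, which is exactly (\ref{ed1}) with equality in the second argument of the max.)

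Putting the two equality analyses together delivers all four statements of the lemma simultaneously.
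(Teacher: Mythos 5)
Your proposal is correct and follows essentially the same route as the paper: membership in $\mathcal{T}$ forces equality throughout (\ref{ed1})--(\ref{ed3}), equality in the convex-combination step (\ref{ed3}) gives $|A_T|=\tfrac{1}{2}(n(T)-|A_T|-1)=\tfrac{n(T)-1}{3}=\nu(T)$, and equality in (\ref{ed2}) gives $|D_T|=2|A_T|+1$, whose equality analysis in the introduction yields the independence of $A_T$, the degree-$3$ condition, and $C_T=\emptyset$. No substantive difference from the paper's argument.
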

\begin{proof}
For a tree $T$ in $\mathcal{T}$,
the inequalities (\ref{ed1}), (\ref{ed2}), and (\ref{ed3})
hold with equality.
Equality in (\ref{ed3}) implies $|A_T|=\frac{1}{2}(n(T)-|A_T|-1)$,
and, hence, $|A_T|=\frac{n(T)-1}{3}=\nu(T)$.
Equality in (\ref{ed2}) implies $|D_T|=2|A_T|+1$,
which implies that 
every vertex in $A_T$ has degree $3$,
$A_T$ is independent, 
and $C_T$ is empty.
\end{proof}
By Lemma \ref{l1}, every tree $T$ in $\mathcal{T}$
has partite sets $A_T$ and $D_T$.

\begin{lemma} \label{l2}
If $T$ is in $\mathcal{T}$, and $X \subseteq D_T$, then the following statements hold.
 \begin{enumerate}
  \item[(i)] 	If $|X| \leq 2$, 
		then $\nu(T-X) = \nu(T)$.
  \item[(ii)] 	If $|X| = 3$ and no vertex in $V(T) \setminus X$ has $3$ neighbors in $X$,
		then $\nu(T-X) = \nu(T)$.
  \item[(iii)] 	If $|X| = 4$, no vertex in $V(T) \setminus X$ has $3$ neighbors in $X$,
		and no vertex $w$ in $V(T) \setminus X$ has neighbors $u$ and $v$ in $V(T) \setminus X$ with $X \subseteq N_T(\{u,v\})$,
		then $\nu(T-X) = \nu(T)$.
 \end{enumerate}
\end{lemma}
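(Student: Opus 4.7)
The plan is to reduce each part to a verification of Hall's condition. By Lemma~\ref{l1} the tree $T$ is bipartite with bipartition $(A_T, D_T)$: every vertex in $A_T$ has degree exactly $3$, so the $3|A_T|$ edges of $T$ are precisely the edges incident to $A_T$, which forces $D_T$ to be independent as well. Moreover $\nu(T) = |A_T|$ is realized by a matching saturating $A_T$. Since $\nu(T - X) \leq \nu(T)$ is automatic, each of (i)--(iii) reduces to exhibiting a matching in $T - X$ saturating $A_T$; by Hall's theorem applied to the bipartite graph obtained from $T$ by deleting $X \subseteq D_T$, this amounts to verifying
\begin{displaymath}
  |N_T(S) \setminus X| \geq |S| \quad \text{for every } S \subseteq A_T.
\end{displaymath}

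The central ingredient is an edge count. For a nonempty $S \subseteq A_T$, the subgraph $T[S \cup N_T(S)]$ is a subforest whose edges are precisely the $3|S|$ edges of $T$ incident to $S$, since $A_T$ and $D_T$ are both independent and $N_T(S) \subseteq D_T$. If $c$ denotes the number of components of this subforest, then $|S| + |N_T(S)| - c = 3|S|$, so
\begin{displaymath}
  |N_T(S)| - |S| = |S| + c \geq 2.
\end{displaymath}
Hall's condition in $T - X$ therefore fails exactly when some nonempty $S$ has $|N_T(S) \cap X| > |S| + c$.

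Parts (i) and (ii) follow quickly. In (i) the inequality $|N_T(S) \cap X| \leq |X| \leq 2 \leq |S| + c$ is automatic. In (ii) a Hall failure forces $|N_T(S) \cap X| = 3$ and $|S| + c \leq 2$, hence $|S| = c = 1$, so the single vertex $v \in S$ satisfies $N_T(v) = X$, contradicting the hypothesis that no vertex outside $X$ has three neighbors in $X$. Part (iii) is the main obstacle and uses both hypotheses simultaneously. A Hall violation is either of the type just treated, giving some $v \in A_T$ with $N_T(v) \subseteq X$ (forbidden by the first hypothesis), or satisfies $|N_T(S) \cap X| = 4$ and $|S| + c \leq 3$. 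The bound $|N_T(S)| \geq 4$ rules out $|S| = 1$, leaving $|S| = 2$ and $c = 1$; a second edge count, $|N_T(u)| + |N_T(v)| - |N_T(u) \cap N_T(v)| = |N_T(S)| = 2|S| + c = 5$, shows that the two vertices $u, v \in S$ have a unique common neighbor $w$, and that exactly one vertex of $N_T(S)$ lies outside $X$. If $w \notin X$, then $w \in V(T) \setminus X$ is adjacent to $u, v \in V(T) \setminus X$ with $X \subseteq N_T(\{u,v\})$, contradicting the second hypothesis. If $w \in X$, then three of the four remaining neighbors of $\{u,v\}$ lie in $X$, and since these four split evenly between $u$ and $v$, one of $u, v$ has all three of its neighbors in $X$, again contradicting the first hypothesis.
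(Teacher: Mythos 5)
Your proof is correct and follows essentially the same route as the paper: reduce each part to Hall's condition for saturating $A_T$ in the bipartite graph $T-X$, and exploit the count of the $3|S|$ edges in the forest induced by $S\cup N_T(S)$ (the paper bounds this by $|N_T(S)|\ge 2|S|+1$ rather than tracking the component number $c$ exactly, and handles the $|S|=2$ case of (iii) slightly more directly, but the ideas coincide).
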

\begin{proof}
 Let $T^\prime = T - X$, and suppose that $T^\prime$ has matching number less than $\nu(T)=|A_T|$.
By Hall's Theorem, cf. \cite{lp}, this implies that there is a non-empty set $S \subseteq A_T$ such that $|N_{T^\prime}(S)| < |S|$.
 By Lemma \ref{l1}, $T[N_T[S]]$ is a forest with exactly $3|S|$ many edges, which implies that
 $|N_{T^\prime}(S)| \geq 2|S|+ 1 - |X|$, that is, $|S| \leq |X|-2$. This proves $(i)$.
 If $|S| = 1$, then $|N_{T^\prime}(S)| \geq 1$, because no vertex in $V(T) \setminus X$ is adjacent to $3$ vertices in $X$.
 This proves $(ii)$. Hence, we may assume that $|X| = 4$, and that $S$ contains two distinct vertices $u$ and $v$.
 Since $|N_{T}(S)| \geq 5$ and $|N_{T^\prime}(S)| \leq 1$, we obtain that $X \subseteq N_T(\{u,v\})$.
 Moreover, since neither $u$ nor $v$ have all its neighbors in $X$ 
 and $|N_{T^\prime}(S)| \leq 1$,
 it follows that $u$ and $v$ have a common neighbor $w$ in $V(T) \setminus X$, which is a contradiction. 
\end{proof}
The next two lemmas state that $\mathcal{T}$ is closed under contracting the edges
of the subgraphs corresponding to $(ii)$ and $(iii)$ of Lemma \ref{l2},
and that the Gallai-Edmonds decomposition is stable under these operations.

\begin{lemma}\label{l3}
Let $T$ be in $\mathcal{T}$. Let $v \in A_T$, and let $N_T(v) = \{u_1,u_2,u_3\}$.
If $T^\prime$ arises from $T$ by contracting the edges $vu_1$, $vu_2$, and $vu_3$,
and $w$ is the newly created vertex,
then $T^\prime \in \mathcal{T}$,
$D_{T^\prime} = \{w\} \cup D_T\setminus N_T(v)$, and
$A_{T^\prime} = A_T \setminus \{v\}$.
\end{lemma}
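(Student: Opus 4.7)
The plan is to pin down $\nu(T')$ first and then read off the Gallai--Edmonds decomposition of $T'$ by counting via Theorem~\ref{t2}(iv). Contracting edges of a tree produces a tree, so $T'$ is a tree with $n(T') = n(T) - 3$. By Lemma~\ref{l1}, $T$ is bipartite with parts $A_T$ and $D_T$, each vertex of $A_T$ has degree exactly $3$, and $\nu(T) = |A_T|$. Since the three contracted edges run between $v \in A_T$ and $u_1, u_2, u_3 \in D_T$, the tree $T'$ is bipartite with parts $A_T \setminus \{v\}$ and $(D_T \setminus N_T(v)) \cup \{w\}$, and each vertex of $A_T \setminus \{v\}$ retains its degree $3$ in $T'$.

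First I would show $\nu(T') = |A_T| - 1$. The upper bound is immediate from bipartiteness, since the smaller part $A_T \setminus \{v\}$ has only $|A_T| - 1$ vertices. For the lower bound, I apply Lemma~\ref{l2}(i) with $X = \{u_1, u_2\}$ to obtain a matching $N$ of $T - X$ of size $|A_T|$; this matching saturates $A_T$ and must therefore use the edge $vu_3$, the only surviving $T$-neighbor of $v$. Then $N \setminus \{vu_3\}$ is a matching of $T - \{v, u_1, u_2, u_3\}$, equivalently of $T' - w$, of size $|A_T| - 1$.

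Next I would show $\{w\} \cup (D_T \setminus N_T(v)) \subseteq D_{T'}$. The matching just constructed has size $\nu(T')$ and avoids $w$, so $w \in D_{T'}$. For any $u \in D_T \setminus N_T(v)$, I apply Lemma~\ref{l2}(ii) to $X = \{u, u_1, u_2\}$. To verify the hypothesis, a vertex with three neighbors in $X$ would lie in $A_T$ by bipartiteness, and would have to be adjacent to both $u_1$ and $u_2$; no $A_T$-vertex distinct from $v$ has this property, because otherwise $T$ would contain a cycle through $v$, and $v$ itself is not adjacent to $u$ since $u \notin N_T(v)$. Lemma~\ref{l2}(ii) then furnishes a matching of $T - X$ of size $|A_T|$ that again uses $vu_3$, and removing this edge yields a matching of $T'$ of size $\nu(T')$ missing both $w$ and $u$; hence $u \in D_{T'}$.

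To finish I would count. Because $T'$ is a tree, factor-critical subgraphs of $T'$ are single vertices, so $c(T'[D_{T'}]) = |D_{T'}|$. Substituting $\nu(T') = |A_T| - 1$ and $n(T') = 3|A_T| - 2$ into Theorem~\ref{t2}(iv) gives $|D_{T'}| - |A_{T'}| = |A_T|$; combining this with $|A_{T'}| + |D_{T'}| + |C_{T'}| = n(T')$ and with $|D_{T'}| \geq 2|A_T| - 1$ from the previous paragraph forces $|A_{T'}| = |A_T| - 1$, $|D_{T'}| = 2|A_T| - 1$, and $|C_{T'}| = 0$, yielding the two claimed equalities. Since each vertex of $A_{T'}$ still has degree $3$ in $T'$ and $\nu(T') = (n(T') - 1)/3$, we conclude $T' \in \mathcal{T}$. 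The main delicate point is the hypothesis check for Lemma~\ref{l2}(ii) in the third paragraph; everything else is bookkeeping with the Gallai--Edmonds formula.
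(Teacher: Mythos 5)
Your proof is correct, and it takes a partly different route from the paper's. The overlap: like the paper, you get $\nu(T')\ge \nu(T)-1$ from Lemma~\ref{l2}(i) with $X=\{u_1,u_2\}$, observing that a maximum matching of $T-X$ saturates $A_T$ and hence uses $vu_3$. The differences: the paper proves $\nu(T')\le\nu(T)-1$ and the reverse inclusion $D_{T'}\subseteq\{w\}\cup D_T\setminus N_T(v)$ by lifting maximum matchings of $T'$ back to matchings of $T$ and augmenting them with an edge at $v$, whereas you bound $\nu(T')$ above simply by the bipartition $(A_T\setminus\{v\},\,\{w\}\cup D_T\setminus N_T(v))$ and then clinch both set identities by plugging $\nu(T')=|A_T|-1$, $n(T')=3|A_T|-2$ and $c(T'[D_{T'}])=|D_{T'}|$ into Theorem~\ref{t2}(iv); together with the inclusion $\{w\}\cup D_T\setminus N_T(v)\subseteq D_{T'}$ this forces $|A_{T'}|=|A_T|-1$, $|D_{T'}|=2|A_T|-1$, $C_{T'}=\emptyset$, from which the claimed equalities follow. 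For the forward inclusion you use Lemma~\ref{l2}(ii) with the triple $\{u,u_1,u_2\}$ (your cycle argument verifying its hypothesis is the delicate point and is correct), while the paper gets by with Lemma~\ref{l2}(i) and the pairs $\{x,u_1\}$ resp.\ $\{u_1,u_2\}$, since in a maximum matching of $T$ missing $x$ and $u_1$ the vertex $v$ is matched to $u_2$ or $u_3$ and that edge can be discarded. Your counting finish is shorter and more mechanical (no analysis of how matchings of $T'$ lift to $T$) at the cost of invoking the full Gallai--Edmonds formula; the paper stays entirely at the level of explicit matchings. Two steps you leave implicit are easily supplied: passing from the cardinalities to the set identities (containment plus equal size for $D_{T'}$; $C_{T'}=\emptyset$ plus the bipartition for $A_{T'}$), and the observation that no vertex outside $\{v,u_1,u_2,u_3\}$ is adjacent to two of the contracted vertices, so $T'$ is simple and the degrees in $A_T\setminus\{v\}$ are unchanged.
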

\begin{proof}
Note that, since $T$ is a tree, $T^\prime$ is a tree.
By Lemma \ref{l2} $(i)$, there is a maximum matching $M$ in $T$ that does not cover $u_1$ and $u_2$.
Since $vu_3 \in M$, the matching $M \setminus \{vu_3\}$ is a matching in $T^\prime$, which implies that
$\nu(T^\prime) \geq \nu(T) -1$. 
Every maximum matching $M$ in $T^\prime$ corresponds to a matching 
$M^\prime$ in $T$ of the same size that does not cover $v$ 
as well as at least two vertices in $N_T(v)$.
By symmetry, we may assume that $u_1$ is not covered by $M^\prime$, which, 
since $M^\prime \cup \{vu_1\}$ is a matching in $T$,
implies that $\nu(T) \geq \nu(T^\prime)+1$.
Thus, $T^\prime$ has matching number $\frac{n(T)-4}{3} = \frac{n(T^\prime)-1}{3}$.

Let $x \in \{w\} \cup D_T\setminus N_T(v)$. If $x \neq w$, then let $X = \{x,u_1\}$, 
and, if $x = w$, then let $X = \{u_1,u_2\}$.
By Lemma \ref{l2} $(i)$, $T$ has a maximum matching $M$ that does not cover
the vertices in $X$.
If $x \neq w$, then either $vu_2$ or $vu_3$ belongs to $M$, which implies that at most one edge in 
$E_T(N_T[v])$ belongs to $M$. 
By symmetry, we may assume that $vu_2 \in M$,
which implies that
$M \setminus \{vu_2\}$ corresponds 
to a maximum matching in $T^\prime$ of size $\nu(T^\prime)$ that does not cover $x$.
If $x = w$, then $vu_3$ belongs to $M$, and, similarly as before,
the matching $M \setminus \{vu_3\}$ is a matching in $T^\prime$ that does not cover $w$.
Thus, $\{w\} \cup D_T\setminus N_T(v) \subseteq D_{T^\prime}$.
Suppose that $y \in D_{T^\prime} \setminus (\{w\} \cup D_T\setminus N_T(v))$, and let
$M$ be some maximum matching in $T^\prime$ such that $y$ is not covered by $M$.
Then $M$ corresponds to a matching $M^\prime$ in $T$ of the same size that covers at most 
one vertex in $N_T(v)$, say $u_1$. Since the matching $M^\prime \cup \{vu_2 \}$
is maximum in $T$, and $y$ is not covered by $M$, it follows that $y \in D_T$,
which is a contradiction. 
Hence, $\{w\} \cup D_T\setminus N_T(v) = D_{T^\prime}$,
which implies that
$A_{T^\prime} = N_{T^\prime}(D_{T^\prime}) = A_T \setminus \{v\}$.
Thus, all vertices in $A_{T^\prime}$ have degree at most $3$,
which implies that $T^\prime \in \mathcal{T}$.
\end{proof}

\begin{lemma} \label{l4}
For $T \in \mathcal{T}$, let $v_1,v_2 \in A_T$, let 
$N_T(v_1) = \{u_1,u_2,u_5\}$, and let 
$N_T(v_2) = \{u_3,u_4,u_5\}$.
If $T^\prime$ arises from $T$ by contracting the edges in 
$E_T(\{v_1,v_2\})$,
and $w$ is the newly created vertex,
then $T^\prime \in \mathcal{T}$,
$D_{T^\prime} = \{w\} \cup D_T \setminus N_T(\{v_1,v_2\})$, and
$A_{T^\prime} = A_T \setminus \{v_1,v_2\}$.
\end{lemma}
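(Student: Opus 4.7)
The plan is to follow the template of Lemma~\ref{l3}, adapted to the ``double contraction'' at the common neighbor $u_5$ of $v_1$ and $v_2$. Set $U=\{v_1,v_2,u_1,u_2,u_3,u_4,u_5\}$; since $T$ is a tree and $T[U]$ is a subtree on $7$ vertices, the graph $T^\prime$ obtained by collapsing $T[U]$ to the single vertex $w$ is again a tree, with $n(T^\prime)=n(T)-6$. The heart of the proof is to compute $\nu(T^\prime)$ and then to identify $D_{T^\prime}$ and $A_{T^\prime}$.

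First I would establish $\nu(T^\prime)=\nu(T)-2$. For the lower bound I would apply Lemma~\ref{l2}~$(ii)$ to $T$ with $X=\{u_2,u_4,u_5\}$; the hypothesis holds because $v_1$ and $v_2$ each have only two neighbors in $X$, and any further vertex adjacent to all of $u_2,u_4,u_5$ would create a cycle of length at most $6$ in the tree $T$. The resulting maximum matching $M$ of $T$ is forced to contain the internal edges $v_1u_1$ and $v_2u_3$, and no remaining $u_i$ is matched outside $U$, so its projection $M\setminus\{v_1u_1,v_2u_3\}$ is a matching of $T^\prime$ of size $\nu(T)-2$. For the upper bound I would lift a matching $M^\prime$ of $T^\prime$ back to $T$: if $M^\prime$ contains an edge $wz$, replace it by $u_iz$ for the unique $u_i\in U\cap N_T(z)$, and augment by two edges $v_1y_1$ and $v_2y_2$ chosen from $\{u_1,\ldots,u_5\}\setminus\{u_i\}$; since $v_1$ and $v_2$ have disjoint neighborhoods apart from $u_5$, such a choice always exists. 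Hence $\nu(T^\prime)=\nu(T)-2$, which by Lemma~\ref{l1} gives $\nu(T^\prime)=|A_T|-2=\frac{n(T^\prime)-1}{3}$.

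To identify $D_{T^\prime}$, I would first prove the inclusion $\{w\}\cup(D_T\setminus N_T(\{v_1,v_2\}))\subseteq D_{T^\prime}$ by exhibiting, for each such vertex, a maximum matching of $T^\prime$ not covering it. For $y\in D_T\setminus N_T(\{v_1,v_2\})$ I would apply Lemma~\ref{l2}~$(ii)$ with $X=\{y,u_2,u_4\}$ (the hypothesis again follows from the absence of short cycles in $T$); for $y=w$ the matching constructed in the previous paragraph already does the job. In both cases, $M$ uses at most one edge from $U$ to $V(T)\setminus U$, so its projection to $T^\prime$ has size $\nu(T^\prime)$ and avoids the prescribed vertex. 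The reverse inclusion follows by lifting: any maximum matching $M^\prime$ of $T^\prime$ extends to a maximum matching of $T$, and since Lemma~\ref{l1} tells us that every maximum matching of $T$ covers $A_T$, every $z\in A_T\setminus\{v_1,v_2\}$ is covered by $M^\prime$ and hence lies outside $D_{T^\prime}$. This yields $D_{T^\prime}=\{w\}\cup(D_T\setminus N_T(\{v_1,v_2\}))$; the identity $A_{T^\prime}=A_T\setminus\{v_1,v_2\}$ then follows because every such $z$ has all its $T$-neighbors in $D_T$ and hence all its $T^\prime$-neighbors in $D_{T^\prime}$. Finally, each such $z$ can have at most one neighbor in $U$ (else a cycle of length $4$ or $6$ appears in $T$), so its degree in $T^\prime$ equals its degree in $T$, which is at most $3$; therefore $T^\prime\in\mathcal{T}$.

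The main obstacle I expect is the lifting step: ensuring that, whichever $u_i$ is consumed by replacing $wz$ with $u_iz$, the remaining vertices in $\{u_1,\ldots,u_5\}$ still admit a matching of size $2$ with $v_1$ and $v_2$. Because $v_1$ and $v_2$ share the common neighbor $u_5$, this requires a small case analysis distinguishing $u_i=u_5$ from $u_i\in\{u_1,u_2\}$ and $u_i\in\{u_3,u_4\}$, and checking in each case that the residual bipartite matching instance between $\{v_1,v_2\}$ and $\{u_1,\ldots,u_5\}\setminus\{u_i\}$ admits a perfect matching.
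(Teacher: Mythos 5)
Your proof is correct and takes essentially the same route as the paper's own (omitted) argument, which simply mimics Lemma~\ref{l3}: use Lemma~\ref{l2}~$(ii)$ to get maximum matchings of $T$ missing suitable triples, project and lift matchings across the contraction to obtain $\nu(T^\prime)=\nu(T)-2$, and then identify $D_{T^\prime}$ and $A_{T^\prime}$ exactly as you do. The lifting case analysis you flag as the main obstacle goes through just as you describe, so no gap remains.
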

The proof of Lemma \ref{l4} mimics the proof of Lemma \ref{l3}
and is therefore left to the reader.
Next, we define the graphs in $\mathcal{B}$, see Figures \ref{fig1}-\ref{fig3}.	
\begin{itemize}
 \item Let $G_1$ arise from $K_4$ by subdividing all edges once,
 \item let $G_2$ arise from $G_1$ by adding $3$ independent vertices and edges as illustrated in Figure \ref{fig1},
 \item let $G_3$ arise from $G_1$ by adding a $6$-cycle and edges as illustrated in Figure \ref{fig1}, and
 \item let $G_4$ arise from $G_3$ by adding $3$ independent vertices and edges as illustrated in Figure \ref{fig1}. 
\end{itemize}

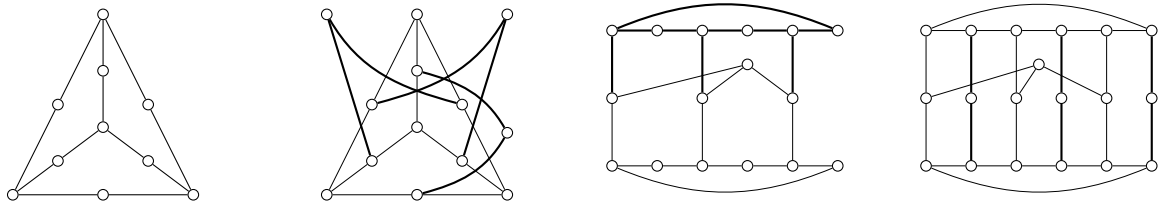
\begin{figure}[H] 
 \begin{minipage}[b]{0.25\textwidth}
 \centering\tiny
\begin{tikzpicture}[scale = 1.2]
	    \node  (u1) at (0,0) {};
	    \node  (u2) at (2,0) {};
	    \node  (u3) at (1,2) {};
	    \node  (u4) at (1,0.75) {};
	    \node  (v1) at (1,0) {};
	    \node  (v2) at (1.5,1) {};
	    \node  (v3) at (0.5,1) {};
	    \node  (v4) at (0.5,0.375) {};
	    \node  (v5) at (1.5,0.375) {};
	    \node  (v6) at (1,1.375) {};
	    
	    \foreach \from/\to in {u1/v1, v1/u2, v2/u2, v2/u3, v3/u1, v3/u3, v4/u1, v4/u4, v5/u2, v5/u4, v6/u3, v6/u4}
	    \draw [-] (\from) -- (\to);
\end{tikzpicture}
 \end{minipage}
\begin{minipage}[b]{0.24\textwidth}
 \centering\tiny
\begin{tikzpicture}[scale = 1.2]
	    \node (u1) at (0,0) {};
	    \node (u2) at (2,0) {};
	    \node (u3) at (1,2) {};
	    \node (u4) at (1,0.75) {};
	    \node (v1) at (1,0) {};
	    \node (v2) at (1.5,1) {};
	    \node (v3) at (0.5,1) {};
	    \node (v4) at (0.5,0.375) {};
	    \node (v5) at (1.5,0.375) {};
	    \node (v6) at (1,1.375) {};
	    
	    \node (w1) at (0,2) {};
	    \node  (w2) at (2,2) {};
	    \node  (w3) at (2,1.375/2) {};
	    
	    \foreach \from/\to in {u1/v1, v1/u2, v2/u2, v2/u3, v3/u1, v3/u3, v4/u1, v4/u4, v5/u2, v5/u4, v6/u3, v6/u4}
	    \draw [-, very thin] (\from) -- (\to);
	    
	    \foreach \from/\to in {w1/v4, w2/v5}
	    \draw [-, thick] (\from) -- (\to);
	    
	    %w2 v3 w1 v2
	    \draw[-,thick] (w2) to[out=-120,in=15] (v3);
	    \draw[-,thick] (w1) to[out=-60,in=165] (v2);
	    \draw[-,thick] (w3) to[out=-120,in=15] (v1);
	    \draw[-,thick] (w3) to[out=120,in=-15] (v6);
\end{tikzpicture}
\end{minipage}
\begin{minipage}[b]{0.24\textwidth}
 \centering\tiny
\begin{tikzpicture}[scale = 1.2]

	    \foreach \i in {0,1,2,3,4,5} {
	    \node (v\i) at (0+\i/2,0.25) {};
	    \node (u\i) at (0+\i/2,1.75) {};	    
	    }
	    
	    \node (w0) at (0,1) {};
	    \node (w1) at (1,1) {};
	    \node (w2) at (2,1) {};

	    \foreach \i/\j in {0/0,1/2,2/4} {
	    \draw[-, very thin] (w\i) -- (v\j);
	    \draw[-,thick] (w\i) -- (u\j);
	    }

	    \foreach \i/\j in {0/1, 1/2, 2/3, 3/4, 4/5} {
	    \draw[-, very thin] (v\i) -- (v\j);
	    \draw[-,thick] (u\i) -- (u\j);
	    }
	    
	    \node (z) at (1.5,1.375) {};
	    \foreach \i in {0,1,2} 
	    \draw[-, very thin] (z) -- (w\i);
	    
	    \draw[-, very thin] (v0) to[out=-22.5,in=-157.5] (v5);
	    \draw[-, thick] (u0) to[out=22.5,in=157.5] (u5);
	    
\end{tikzpicture}
\end{minipage}
\begin{minipage}[b]{0.25\textwidth}
 \centering\tiny
\begin{tikzpicture}[scale = 1.2]

	    \foreach \i in {0,1,2,3,4,5} {
	    \node (v\i) at (0+\i/2,0.25) {};
	    \node (u\i) at (0+\i/2,1.75) {};
	    \node (w\i) at (0+\i/2,1) {};
	    }

	    \foreach \i in {0,2,4} {
	    \draw[-, very thin] (w\i) -- (v\i);
	    \draw[-, very thin] (w\i) -- (u\i);
	    }
	    
	    \foreach \i in {1,3,5} {
	    \draw[-,thick] (w\i) -- (v\i);
	    \draw[-,thick] (w\i) -- (u\i);
	    }
	    
	    \foreach \i/\j in {0/1, 1/2, 2/3, 3/4, 4/5} {
	    \draw[-, very thin] (v\i) -- (v\j);
	    \draw[-, very thin] (u\i) -- (u\j);
	    }
	    
	    \node (z) at (1.25,1.375) {};
	    \foreach \i in {0,2,4} 
	    \draw[-, very thin] (z) -- (w\i);
	    
	    \draw[-, very thin] (v0) to[out=-22.5,in=-157.5] (v5);
	    \draw[-, very thin] (u0) to[out=22.5,in=157.5] (u5);
	    
\end{tikzpicture}
\end{minipage}
\caption{The graphs $G_1$, $G_2$, $G_3$, and $G_4$. The thick edges illustrate the added part.} \label{fig1}
\end{figure} 

All remaining graphs $G_5,\ldots,G_9$ arise from $G_2$, $G_3$, or $G_4$ either by adding a path of length $2$ 
or a cycle of length $6$ and connecting them to the existing graph by some additional edges, 
see Figures \ref{fig2} and \ref{fig3}.
\begin{figure}[H] 
 \begin{minipage}[b]{0.49\textwidth}
 \centering\tiny
 \begin{tikzpicture}[scale = 1.2]
	    \node (u1) at (0,0) {};
	    \node (u2) at (2,0) {};
	    \node (u3) at (1,2) {};
	    \node (u4) at (1,0.75) {};
	    \node (v1) at (1,0) {};
	    \node (v2) at (1.5,1) {};
	    \node (v3) at (0.5,1) {};
	    \node (v4) at (0.5,0.375) {};
	    \node (v5) at (1.5,0.375) {};
	    \node (v6) at (1,1.375) {};
	    
	    \node(x1)[label=above:\footnotesize$v$] at (1,2.75) {};
	    \node(x2)[label=right:\footnotesize$u$] at (2.5,2) {};
	    \node(x3)[label=right:\footnotesize$w$] at (2.5,5.375/4) {};

	    \node  (w1) at (0,2) {};
	    \node  (w2) at (2,2) {};
	    \node  (w3) at (2,1.375/2) {};
	    
	    \foreach \from/\to in {u1/v1, v1/u2, v2/u2, v2/u3, v3/u1, v3/u3, v4/u4, v5/u4, v6/u4}
	    \draw [-, very thin] (\from) -- (\to);
	    
	    \foreach \from/\to in {w1/v4, w2/v5}
	    \draw [-, very thin] (\from) -- (\to);
	    
	    \foreach \from/\to in {x1/w2, x2/x3, x3/w3}
	    \draw [-, thick] (\from) -- (\to);
	    
	    \draw[-,thick] (x1) to[out=0,in=120] (x2);
	    \draw[-, very thin] (w2) to[out=-120,in=15] (v3);
	    \draw[-, very thin] (w1) to[out=-60,in=165] (v2);
	    \draw[-, dashed, very thin] (w3) to[out=-120,in=15] (v1);
	    \draw[-, very thin] (w3) to[out=120,in=-15] (v6);
	    
	    \foreach \from/\to in {u1/v4, u2/v5, u3/v6}
	    \draw [-, dashed, very thin] (\from) -- (\to);
	    
	     \draw[-, dashed, thick] (x1) -- (w1);
\end{tikzpicture}
 \end{minipage}
 \begin{minipage}[b]{0.49\textwidth}
 \centering\tiny
 \begin{tikzpicture}[scale = 1.2]
	    \node (u1) at (0,0) {};
	    \node (u2) at (2,0) {};
	    \node (u3) at (1,2) {};
	    \node (u4) at (1,0.75) {};
	    \node (v1) at (1,0) {};
	    \node (v2) at (1.5,1) {};
	    \node (v3) at (0.5,1) {};
	    \node (v4) at (0.5,0.375) {};
	    \node (v5) at (1.5,0.375) {};
	    \node (v6) at (1,1.375) {};
	    
	    \foreach \i in {0,1,2,3,4,5}
	    \node (x\i) at (3,2-0.4*\i) {};
	    
	    \foreach \i/\j in {0/1, 1/2, 2/3, 3/4, 4/5} {
	    \draw[-, thick] (x\i) -- (x\j);
	    }
	        
	    \node  (w1) at (0,2) {};
	    \node  (w2) at (2,2) {};
	    \node  (w3) at (2,1.375/2) {};
	    
	    \foreach \from/\to in {u1/v1, v1/u2, v2/u2, v2/u3, v3/u1, v3/u3, v4/u4, v5/u4, v6/u4}
	    \draw [-, very thin] (\from) -- (\to);
	    
	    \foreach \from/\to in {w1/v4, w2/v5}
	    \draw [-, very thin] (\from) -- (\to);

	    \draw[-, very thin] (w2) to[out=-120,in=15] (v3);
	    \draw[-, very thin] (w1) to[out=-60,in=165] (v2);
	    \draw[-, dashed, very thin] (w3) to[out=-120,in=15] (v1);
	    \draw[-, very thin] (w3) to[out=120,in=-15] (v6);
	    
	    \draw[-, dashed, thick] (x0) to[out=150,in=30] (w1);
	    \draw[-, dashed, thick] (x2) -- (w2);
	    \draw[-, thick] (x4) -- (w3);
	    \draw[-, thick] (x5) to[out=67.5,in=-67.5] (x0);
	    \foreach \from/\to in {u1/v4, u2/v5, u3/v6}
	    \draw [-, dashed, very thin] (\from) -- (\to);

\end{tikzpicture}
 \end{minipage}
 \caption{The graphs $G_5$ and $G_6$. The dashed edges illustrate a maximum uniquely restricted matching.
 Note that the dashed edges belong to the graph.} \label{fig2}
 \end{figure}
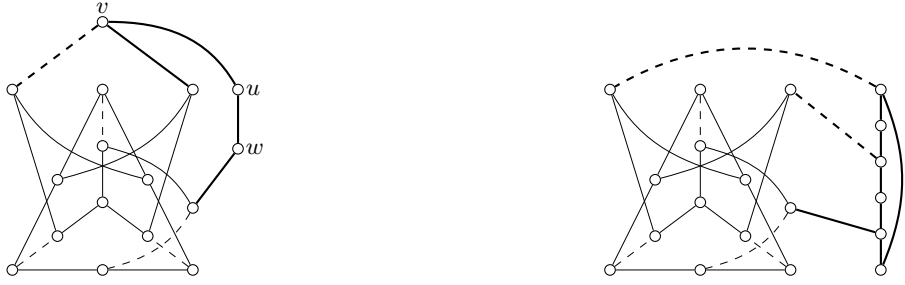 
 \begin{figure}[H] 
 \begin{minipage}[b]{0.33\textwidth}
 \centering \tiny
\begin{tikzpicture}[scale = 1.2]
	  \foreach \i in {0,1,2,3,4,5} {
	    \node (v\i) at (0+\i/2,0.25) {};
	    \node (u\i) at (0+\i/2,1.75) {};	    
	    }
	    
	    \node (w0) at (0,1) {};
	    \node (w1) at (1,1) {};
	    \node (w2) at (2,1) {};
	    
	    \node (x0) at (3.5,1) {};
	    \node (x1) at (3,1.375) {};
	    \node (x2) at (3,1.25/2) {};
	    
	    \draw[-, thick] (x0) -- (x1);
	    \draw[-, thick] (x0) -- (x2);
	    
	    \draw[-, thick, dashed] (x1) -- (v3);
	    \draw[-, thick] (x1) -- (u5);
	    \draw[-, thick] (x2) -- (u3);
	    \draw[-, thick, dashed] (x2) -- (v5);
	    
	    \foreach \i/\j in {2/4} {
	    \draw[-, very thin] (w\i) -- (v\j);
	    \draw[-,very thin] (w\i) -- (u\j);
	    }
	    \draw[-, very thin] (w0) -- (u0);
	    \draw[-, very thin, dashed] (w0) -- (v0);
	    \draw[-, very thin] (w1) -- (u2);
	    \draw[-, very thin, dashed] (w1) -- (v2);

	    \foreach \i/\j in {0/1, 1/2, 3/4, 4/5} {
	    \draw[-, very thin] (v\i) -- (v\j);
	    \draw[-,very thin] (u\i) -- (u\j);
	    }
	    \draw[-, very thin, dashed] (u2) -- (u3);
	    \draw[-, very thin] (v2) -- (v3);
	    
	    \node (z) at (1.5,1.375) {};
	    \foreach \i in {0,1} 
	    \draw[-, very thin] (z) -- (w\i);
	    
	    \draw[-, very thin, dashed] (z) -- (w2);
	    
	    \draw[-, very thin] (v0) to[out=-22.5,in=-157.5] (v5);
	    \draw[-, very thin] (u0) to[out=22.5,in=157.5] (u5);
	    
\end{tikzpicture}
\end{minipage}
 \begin{minipage}[b]{0.33\textwidth}
 \centering\tiny
 \begin{tikzpicture}[scale = 1.2]
	    \foreach \i in {0,1,2,3,4,5} {
	    \node (v\i) at (0+\i/2,0.25) {};
	    \node (u\i) at (0+\i/2,1.75) {};
	    \node (w\i) at (0+\i/2,1) {};
	    }
	    \node (x1)[label=above:\footnotesize$v$] at (1,2.3) {};
	    \node (x2)[label=above:\footnotesize$u$] at (1.5,2.3) {};
	    \node (x3)[label=above:\footnotesize$w$] at (2,2.3) {};
	    
	    \foreach \i/\j in {1/2,2/3} {
	    \draw[-, thick] (x\i) -- (x\j);
	    }

	    \foreach \i in {1,2,3} {
	    \draw[-, very thin] (w\i) -- (v\i);
	    \draw[-, very thin] (w\i) -- (u\i);
	    }
	    \draw[-, very thin] (u0) -- (w0);
	    \draw[-, very thin, dashed] (v0) -- (w0);
	    
	    \draw[-, very thin] (u4) -- (w4);
	    \draw[-, very thin, dashed] (v4) -- (w4);
	    
	    \draw[-, very thin, dashed] (u5) -- (w5);
	    \draw[-, very thin] (v5) -- (w5);
	    
	    \foreach \i/\j in {1/2, 3/4, 4/5} {
	    \draw[-, very thin] (v\i) -- (v\j);
	    \draw[-, very thin] (u\i) -- (u\j);
	    }
	    
	    \draw[-, very thin, dashed] (u0) -- (u1);
	    \draw[-, very thin, dashed] (u2) -- (u3);
	    
	    \draw[-, very thin] (v0) -- (v1);
	    \draw[-, very thin, dashed] (v2) -- (v3);

	    \node (z) at (1.25,1.375) {};
	    \foreach \i in {0,2,4} 
	    \draw[-, very thin] (z) -- (w\i);
	    
	    \draw[-, very thin] (v0) to[out=-22.5,in=-157.5] (v5);
	    \draw[-, very thin] (u0) to[out=22.5,in=157.5] (u5);
	    
	    \draw[-, thick, dashed] (x1) -- (w1);
	    \draw[-, thick] (x1) -- (w3);
	    \draw[-, thick] (x3) -- (w5);
\end{tikzpicture}
 \end{minipage}
 \begin{minipage}[b]{0.33\textwidth}
 \centering\tiny
 \begin{tikzpicture}[scale = 1.2]
	    \foreach \i in {0,1,2,3,4,5} {
	    \node (v\i) at (0+\i/2,0.25) {};
	    \node (u\i) at (0+\i/2,1.75) {};
	    \node (w\i) at (0+\i/2,1) {};
	    \node (x\i) at (0+\i/2,2.3) {};
	    }

	    \foreach \i in {1,2,3,4} {
	    \draw[-, very thin] (w\i) -- (v\i);
	    \draw[-, very thin] (w\i) -- (u\i);
	    }
	    \draw[-, very thin, dashed] (v0) -- (w0);
	    \draw[-, very thin] (u0) -- (w0);
	    
	    \draw[-, very thin] (v5) -- (w5);
	    \draw[-, very thin, dashed] (u5) -- (w5);

	    \foreach \i/\j in {0/1, 2/3, 4/5} {
	    \draw[-, very thin] (u\i) -- (u\j);
	    }
	    \draw[-, very thin, dashed] (u1) -- (u2);
	    \draw[-, very thin, dashed] (u3) -- (u4);
	    
	    \foreach \i/\j in {0/1, 1/2, 3/4, 4/5} {
	    \draw[-, very thin] (v\i) -- (v\j);
	    }
	    \draw[-, very thin, dashed] (v2) -- (v3);
	    
	    \foreach \i/\j in {0/1, 1/2, 2/3, 3/4, 4/5} {
	    \draw[-, thick] (x\i) -- (x\j);
	    }

	    \node (z) at (1.25,1.375) {};
	    \foreach \i in {0,2} 
	    \draw[-, very thin] (z) -- (w\i);
	    
	    \draw[-, very thin, dashed] (z) -- (w4);
	    
	    \draw[-, very thin] (v0) to[out=-22.5,in=-157.5] (v5);
	    \draw[-, very thin] (u0) to[out=22.5,in=157.5] (u5);
	    \draw[-, thick] (x0) to[out=22.5,in=157.5] (x5);
	    
	    \draw[-, thick, dashed] (x0) -- (w1);
	    \draw[-, thick, dashed] (x2) -- (w3);
	    \draw[-, thick] (x4) -- (w5);
\end{tikzpicture}
 \end{minipage}
 \caption{The graphs $G_7$, $G_8$, and $G_9$.} \label{fig3}
 \end{figure}
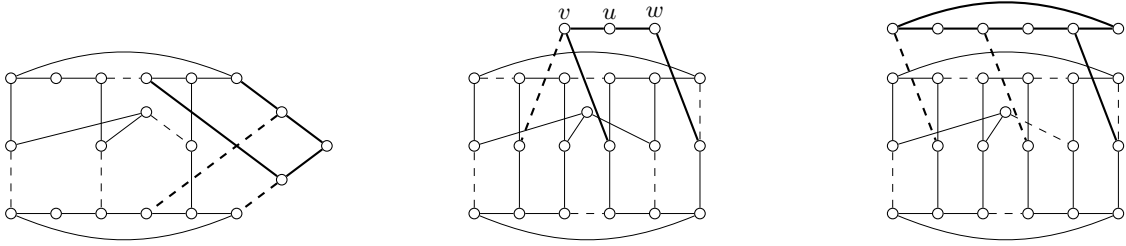
Let $\mathcal{B} = \{G_1,\ldots,G_9 \}$, and let $\mathcal{B}_c = \{G_1,\ldots,G_4\}$.
The following table collects some relevant information on the graphs in $\mathcal{B}$.
It also captures whether one of the following statements holds for a graph $H$ in $\mathcal{B}$.
 \begin{enumerate}
  \item[$(i)$] 	
$\nu_{ur}(H-X) = \nu_{ur}(H)$
for every set $X \subseteq \{u \in V(H) \mid d_G(u) = 2\}$
with $|X| \leq 2$.
  \item[$(ii)$] 	
$\nu_{ur}(H-X) = \nu_{ur}(H)$
for every set $X \subseteq \{u \in V(H) \mid d_G(u) = 2\}$
such that $|X| = 3$ and no vertex in $V(H) \setminus X$ has $3$ neighbors in $X$.
  \item[$(iii)$] 	
$\nu_{ur}(H-X) = \nu_{ur}(H)$
for every set $X \subseteq \{u \in V(H) \mid d_G(u) = 2\}$
such that $|X| = 4$, no vertex in $V(H) \setminus X$ has $3$ neighbors in $X$,
and no vertex $w$ in $V(H) \setminus X$ 
has neighbors $u$ and $v$ in $V(H) \setminus X$ with $X \subseteq N_H(\{u,v\})$.
 \end{enumerate}
While it is easy to see whether the statements $(i)$-$(iii)$ hold for a graph in $\mathcal{B}$,
we determined their uniquely restricted matching number using a computer.
\begin{table}[H]
\begin{center} 
\begin{tabular}{c|c|c|c|c|c}
 & $n(H)$ & $\nu_{ur}(H)$ & $(i)$ & $(ii)$ & $(iii)$ \\ \hline
 $G_1$ & 10 & 3 & \cmark & \cmark & -\\
 $G_2$ & 13 & 4 & \cmark & \xmark & - \\ 
 $G_3$ & 16 & 5 & \cmark & \cmark & \cmark \\ 
 $G_4$ & 19 & 6 & \cmark & \xmark & - \\ 
 $G_5$ & 16 & 5 & \cmark & - & - \\ 
 $G_6$ & 19 & 6 & \cmark & \cmark & - \\ 
 $G_7$ & 19 & 6 & \cmark & \cmark & - \\ 
 $G_8$ & 22 & 7 & \cmark & - & - \\ 
 $G_9$ & 25 & 8 & \cmark & \cmark & - \\ 
\end{tabular}
\end{center}
\caption{The sign '\xmark' means that such a set $X$ exists but the conclusion is not true
while the sign '-' means that there is no such set $X$.} \label{table1}
\end{table} 
Before we proceed, we give some notation which will be used in the remaining
lemmas of this section as well as in the proof of Theorem \ref{t3}.
For a graph $G$ in $\mathcal{G}$, 
let $T$ be the host tree of $G$,
and let $H_1,\ldots,H_k$ be the blocks of $G$ isomorphic to a graph in $\mathcal{B}$.
Let $I \subseteq [k]$, let $G^\prime$ arise from
$G$ by contracting the edges of $H_i$ for every $i$ in $I$,
and let $v_i$ be the vertex in $G^\prime$ corresponding to $H_i$ for every $i$ in $I$.
Note that, if $I = [k]$, then $G^\prime$ is isomorphic to $T$.
For the host tree $T$, let $D_T^\prime = D_T \setminus \{v_1,\ldots,v_k\}$.
Each vertex of $G$ corresponds to a vertex in $G^\prime$;
namely if $u$ is some vertex in $G$, then $u$ either belongs to 
$(A_T \cup D_T) \setminus \{v_i \mid i \in I\} \subseteq V(G^\prime)$, or
$u$ belongs to some block $H_i$, in which case we say that 
$u$ corresponds to $v_i$.

For some uniquely restricted matching $M$ in $G^\prime$, 
we call $M^\prime$ the corresponding 
matching in $G$, if $M^\prime$ arises from $M$
by replacing the edges $uv_i$ in $M$ by $uv$ for some $v$ in $V(H_i)$
that is adjacent to $u$. Since all edges in 
$E(G) \setminus \bigcup_{i=1}^k{E(H_i)}$ are bridges, 
the matching $M^\prime$ is uniquely restricted in $G$.
\begin{lemma} \label{l5}
Every graph $G$ in $\mathcal{G}$ satisfies $\nu_{ur}(G) = \frac{n(G)-1}{3}$.
\end{lemma}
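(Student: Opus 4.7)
The plan is to prove the two inequalities $\nu_{ur}(G)\leq (n(G)-1)/3$ and $\nu_{ur}(G)\geq (n(G)-1)/3$ separately. The bookkeeping ingredient is the identity $n(G)=n(T)+\sum_{i=1}^k(n(H_i)-1)$, which, together with Lemma \ref{l1} and the fact that every block $H$ in $\mathcal{B}$ satisfies $\nu_{ur}(H)=(n(H)-1)/3$ (readable from Table \ref{table1}), rewrites the target as $\nu_{ur}(G)=|A_T|+\sum_{i=1}^{k}\nu_{ur}(H_i)$.

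For the upper bound, I would take an arbitrary uniquely restricted matching $M$ in $G$ and split it as $M=M_{0}\cup\bigcup_{i}M_{i}$, where $M_{i}=M\cap E(H_i)$ and $M_{0}$ consists of the remaining edges, all of which are tree edges. Any $M_{i}$-alternating cycle in $H_i$ is an $M$-alternating cycle in $G$, so $M_{i}$ is uniquely restricted in $H_i$ and satisfies $|M_{i}|\leq\nu_{ur}(H_i)$. By Lemma \ref{l1} together with Theorem \ref{t2}(iii) (which forces the components of $T[D_T]$ to be isolated vertices, as a tree has no bridgeless factor-critical subgraph of order at least three), $T$ is bipartite with parts $A_T$ and $D_T$; hence every tree edge of $G$ has exactly one endpoint in $A_T$, which forces $|M_{0}|\leq|A_T|$. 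Summing these two estimates yields the bound.

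For the lower bound I would construct an explicit uniquely restricted matching. Choose a maximum matching $M_T$ of $T$, which has size $|A_T|$ by Lemma \ref{l1} and is automatically uniquely restricted since $T$ has no cycles. Lift $M_T$ to $G$ by replacing each edge $uv_i\in M_T$ with the edge $uw$, where $w$ is the degree-$2$ attachment vertex of $H_i$ adjacent to $u$. For each $i$, at most one attachment vertex of $H_i$ is used by this lift, so property $(i)$ in Table \ref{table1} supplies a uniquely restricted matching $N_i$ in $H_i$ of size $\nu_{ur}(H_i)$ avoiding that vertex. Taking the union $M'$ of the lifted $M_T$ with all $N_i$ produces a matching of size exactly $|A_T|+\sum\nu_{ur}(H_i)=(n(G)-1)/3$.

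The crux is to verify that $M'$ is uniquely restricted. The key structural claim is that every cycle of $G$ lies inside a single block $H_i$, equivalently, every tree edge of $G$ is a bridge. This is true because the neighbors of $v_i$ in $T$ lie in distinct components of $T-v_i$, so any path in $G$ between two distinct attachment vertices of $H_i$ must traverse $H_i$ itself, precluding any cycle through a tree edge. Granting this, an $M'$-alternating cycle must lie inside some $H_i$, where $M'\cap E(H_i)=N_i$ is uniquely restricted by construction, so no such cycle exists. I expect the bridge claim to be the main obstacle; after it is established, both directions reduce to routine bookkeeping.
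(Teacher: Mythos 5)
Your proposal is correct and follows essentially the same route as the paper: the upper bound by splitting a uniquely restricted matching into block edges (bounded by $\sum_i \nu_{ur}(H_i)$) and the remaining edges (bounded by $|A_T|$ via bipartiteness), and the lower bound by lifting a maximum matching of the host tree and extending it inside each block using property $(i)$ of Table \ref{table1}. The bridge claim you single out is exactly the fact the paper also relies on (stated just before Lemma \ref{l5}), and your contraction-to-$T$ justification of it is sound.
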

\begin{proof}
 Let $M$ be a maximum uniquely restricted matching in $G$,
 and let $M^\prime = M \cap \left(\bigcup_{i=1}^k{E(H_i)}\right)$.
 Since the graph $G^\prime = G - \bigcup_{i=1}^k{E(H_i)}$ is bipartite 
 with partition classes $A_T$ and $V(G) \setminus A_T$, 
 we have that $|M\setminus M^\prime| \leq |A_T|$.
 Furthermore, we have that $|M^\prime| \leq \sum_{i=1}^k{\nu_{ur}(H_i)}$.
 By Lemma \ref{l1} and Table \ref{table1}, we obtain that
 \begin{align*}
  \nu_{ur}(G) \leq |A_T| + \sum_{i=1}^k{\nu_{ur}(H_i)} 
	      =\frac{n(T)-1}{3} + \sum_{i=1}^k{\frac{n(H_i)-1}{3}} 
	      =\frac{n(G)-1}{3}.
 \end{align*}
Let $M$ be some maximum matching in $T$,
and let $M^\prime$
be the corresponding 
uniquely restricted matching in $G$.
Since $|V(M^\prime) \cap V(H_i)| \leq 1$ for each $i \in [k]$,
$H_i$ contains a uniquely restricted matching $M_i$
of size $\frac{n(H_i)-1}{3}$ that is disjoint from $M^\prime$, see Table \ref{table1}.
Since all edges in $E(G) \setminus \bigcup_{i=1}^k{E(H_i)}$ are bridges in $G$,
the matching $M^\prime \cup \bigcup_{i=1}^k{M_i}$ is uniquely restricted in $G$.
Hence, we obtain that
\begin{align*}
 \nu_{ur}(G) \geq \left| M^\prime \cup \bigcup_{i=1}^k{M_i} \right| 
	       = \frac{n(T)-1}{3} + \sum_{i=1}^k{\frac{n(H_i)-1}{3}} 
	       = \frac{n(G)-1}{3}.
\end{align*}
\end{proof}
Our next lemma shows that Lemma \ref{l2} $(i)$ essentially holds for 
the graphs in $\mathcal{G}$.
\begin{lemma} \label{l6}
Let $G \in \mathcal{G}$, and let $X \subseteq D_{T}^\prime \cup \{u \in V(G) \mid d_G(u) = 2\}$.
If $|X| \leq 2$, then $\nu_{ur}(G-X) = \nu_{ur}(G)$.
\end{lemma}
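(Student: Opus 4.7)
The plan is to mimic the second half of the proof of Lemma \ref{l5}, constructing a uniquely restricted matching $M$ in $G$ of size $\frac{n(G)-1}{3}$ that avoids $X$; combined with Lemma \ref{l5}, this yields $\nu_{ur}(G-X) \geq \nu_{ur}(G)$, while the reverse inequality is trivial.

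First I would translate $X$ to a set $X^\prime \subseteq V(T)$ by putting every vertex of $X \cap D_T^\prime$ into $X^\prime$ and, for every vertex of $X$ lying inside some block $H_i$, putting $v_i$ into $X^\prime$. Since $D_T^\prime \subseteq D_T$ and each $v_i \in D_T$, we have $X^\prime \subseteq D_T$ and $|X^\prime| \leq |X| \leq 2$, so Lemma \ref{l2}(i) yields a maximum matching $M_T$ of $T$ disjoint from $X^\prime$, of size $|A_T| = \frac{n(T)-1}{3}$ by Lemma \ref{l1}. Let $M^\prime$ be the corresponding matching in $G$; by construction, $M^\prime$ covers no vertex of $X \cap D_T^\prime$, and $M^\prime$ has no endpoint in $V(H_i)$ whenever $v_i \in X^\prime$.

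Next, for each block $H_i$ I would select a uniquely restricted matching $M_i$ in $H_i$ of size $\nu_{ur}(H_i) = \frac{n(H_i)-1}{3}$ avoiding the set $F_i := (V(H_i) \cap V(M^\prime)) \cup (X \cap V(H_i))$. By the definition of the replacement operation, any vertex of $H_i$ covered by $M^\prime$ is one of the designated degree-$2$ vertices of $H_i$, and every vertex of $X \cap V(H_i)$ has degree $2$ in $G$ and hence in $H_i$ as well (since attachment vertices acquire degree $3$ in $G$). A brief case analysis based on whether $v_i \in X^\prime$ shows $|F_i| \leq 2$: if $v_i \in X^\prime$, then $V(H_i) \cap V(M^\prime) = \emptyset$, so $F_i = X \cap V(H_i)$ has size at most $2$; otherwise the definition of $X^\prime$ forces $X \cap V(H_i) = \emptyset$, so $|F_i| \leq 1$. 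Property $(i)$ of Table \ref{table1} therefore produces $M_i$.

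Finally, $M := M^\prime \cup \bigcup_i M_i$ lies in $G-X$ and has size $\frac{n(T)-1}{3} + \sum_{i=1}^k \frac{n(H_i)-1}{3} = \frac{n(G)-1}{3}$ by the same count as in Lemma \ref{l5}, and it is uniquely restricted because any $M$-alternating cycle would have to lie inside some single block $H_i$ (since every edge of $E(G) \setminus \bigcup_i E(H_i)$ is a bridge), contradicting the unique restrictedness of the chosen $M_i$. The main point to verify carefully is the bookkeeping $|F_i| \leq 2$, which hinges on the observation that $X^\prime$ contains $v_i$ whenever $X \cap V(H_i) \neq \emptyset$, so that an attachment vertex and an $X$-vertex cannot contribute to $F_i$ simultaneously.
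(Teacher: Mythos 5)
Your proposal is correct and follows essentially the same route as the paper: translate $X$ to a set of at most two $D_T$-vertices of the host tree, apply Lemma \ref{l2}(i) to get a maximum matching of $T$ avoiding it, lift it to $G$, and complete each block $H_i$ using property $(i)$ of Table \ref{table1} on a set of at most two degree-$2$ vertices (your $F_i$ bookkeeping matches the paper's case split on whether $V(M')\cap V(H_i)$ is empty), with unique restrictedness following from the bridges outside the blocks. No gaps.
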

\begin{proof}
Let $X^\prime = X \cap D_T^{\prime}$, and let 
$X_T = X^\prime \cup \{v_i \mid i \in [k] \mbox{ and } X \cap V(H_i) \neq \emptyset \}$.
Since $|X| \leq 2$, it follows that $|X_T| \leq 2$, 
which, by Lemma \ref{l2} $(i)$, implies that $T-X_T$ contains a matching $M$ of size $\nu(T) = \frac{n(T)-1}{3}$.
Let $M^\prime$ be the corresponding 
uniquely restricted matching in $G-X$.
Let $i \in [k]$. If $V(M^\prime) \cap V(H_i) = \emptyset$, then
there is a uniquely restricted matching $M_i$ of size 
$\frac{n(H_i)-1}{3}$ in $H_i - X$ since $|X| \leq 2$.
If $V(M^\prime) \cap V(H_i) \neq \emptyset$, then 
$X \cap V(H_i) = \emptyset$ and $|V(M^\prime) \cap V(H_i)| = 1$, which
implies that there is a uniquely restricted matching $M_i$ of size 
$\frac{n(H_i)-1}{3}$ in $H_i$
that is disjoint from $M^\prime$, see Table \ref{table1}.
Since 
the matching $M^\prime \cup \bigcup_{i=1}^k{M_i}$ is
uniquely restricted in $G-X$, 
we obtain that
\begin{align*}
 \nu_{ur}(G-X) \geq \frac{n(T)-1}{3} + \sum_{i=1}^k{\frac{n(H_i)-1}{3}} 
	       = \frac{n(G)-1}{3}.
\end{align*}
\end{proof}
Let $\mathcal{G}_c$ be the set of graphs $G$ in $\mathcal{G}$
such that no block of $G$ belongs to $\mathcal{B} \setminus \mathcal{B}_c$.
Our final lemma in this section shows that Lemma \ref{l2} $(ii)$ essentially holds for 
the graphs in $\mathcal{G}_c$.
\begin{lemma} \label{l7}
 Let $G \in \mathcal{G}_c$ be such that it is not isomorphic to $G_2$ or $G_4$, 
 and let $X \subseteq D_{T}^\prime \cup \{u \in V(G) \mid d_G(u) = 2\}$.
 If $|X| = 3$ and no vertex in $V(G) \setminus X$ has $3$ neighbors in $X$,
 then $\nu_{ur}(G-X) = \nu_{ur}(G)$.
\end{lemma}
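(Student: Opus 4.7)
The plan is to adapt the argument of Lemma \ref{l6}, but to choose the set of vertices deleted from the host tree $T$ more carefully depending on how $X$ projects onto $T$. Set $X' = X \cap D_T'$ and $X_i = X \cap V(H_i)$ for $i \in [k]$, and let $X_T = X' \cup \{v_i : X_i \neq \emptyset\}$, so that $|X_T| \in \{1,2,3\}$; moreover, if $|X_T|=3$ then each nonempty $X_i$ has size exactly $1$, and if $|X_T|=1$ then $X = X_i \subseteq V(H_i)$ for a single $i$. The generic recipe will be: pick $X_T^\star \subseteq X_T$ so that Lemma \ref{l2} provides a matching $M$ of $T - X_T^\star$ of size $|A_T|$, lift $M$ to $M'$ in $G-X$ by replacing every edge $bv_i \in M$ by the unique edge $bu \in E(G)$ with $u \in V(H_i)$, and then pick a uniquely restricted matching $M_i$ in $H_i$ of size $(n(H_i)-1)/3$ that is contained in $H_i - X_i$ and disjoint from $M'$; since inter-block edges are bridges, $M' \cup \bigcup_i M_i$ is then uniquely restricted in $G-X$ and has size $(n(G)-1)/3 = \nu_{ur}(G)$ by Lemma \ref{l5}.

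If $|X_T| \leq 2$, I would take $X_T^\star = X_T$ and apply Lemma \ref{l2} $(i)$; property $(i)$ of Table \ref{table1}, applied with at most $2$ vertices to avoid in each block, handles every block. The delicate situation is $|X_T|=1$ with $|X_i|=3$, which needs property $(ii)$ of $H_i$. Here the exclusion $G \not\cong G_2, G_4$ enters: since the host tree $T$ is not the single vertex, $v_i$ has at least one neighbour in $A_T$, so at least one degree-$2$ vertex of $H_i$ as an element of $\mathcal{B}$ becomes degree-$3$ in $G$. As $G_2$ and $G_4$ have only three degree-$2$ vertices each, $|X_i|=3$ forces $H_i \in \{G_1,G_3\}$, and both have property $(ii)$; the $G$-hypothesis of Lemma \ref{l7} restricts to the $H_i$-hypothesis because the degree-$2$ vertices of $H_i$ in $G$ have no neighbours outside $V(H_i)$.

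If $|X_T|=3$ and no vertex of $V(T) \setminus X_T$ has three neighbours in $X_T$, I would take $X_T^\star = X_T$ and invoke Lemma \ref{l2} $(ii)$; every block matching is then supplied by property $(i)$ of Table \ref{table1}. The main obstacle is the remaining subcase, in which a (necessarily unique) vertex $a \in A_T$ satisfies $N_T(a) = X_T$. The key observation is that although $a$ has three neighbours in $X_T$ in $T$, it has at most two in $X$ in $G$: for each $v_{i^*} \in X_T$ the neighbour $p$ of $a$ inside $V(H_{i^*})$ has degree $3$ in $G$, so $p \notin X$. Since $a$ has at most two neighbours in $X$ by assumption, some $v_{i^*} \in X_T$ is a block vertex, and I set $X_T^\star = X_T \setminus \{v_{i^*}\}$. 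Lemma \ref{l2} $(i)$ then produces $M$ of size $|A_T|$ in $T - X_T^\star$, and since $v_{i^*}$ is $a$'s only surviving neighbour in $T - X_T^\star$, the edge $av_{i^*}$ lies in $M$ and lifts to $ap \in M'$. For block $H_{i^*}$ the required matching must avoid the two degree-$2$ vertices $p$ and $u_{i^*}$ of $H_{i^*}$, which is achieved by property $(i)$ of Table \ref{table1}; the remaining blocks are handled exactly as in the previous case.
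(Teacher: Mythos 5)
Your proof is correct, but it takes a genuinely different route from the paper's. The paper does not decompose all the way down to the host tree: it first treats $X \subseteq D_T^\prime$ via Lemma \ref{l2} $(ii)$, and otherwise chooses a block $H_i$ with smallest nonempty intersection with $X$, contracts only that block, and applies Lemma \ref{l6} to the contracted graph (which is again in $\mathcal{G}$) to avoid the at most two vertices of $(X\setminus V(H_i))\cup\{v_i\}$; the block itself is then handled by Table \ref{table1}, and in the subcase $|X\cap V(H_i)|=3$ the exclusion of $G_2$ and $G_4$ enters through connectedness ($H_i$ would have no attachment to $A_T$, so $G\cong H_i$). You instead work globally over the host tree, split on $|X_T|$, and isolate the one delicate configuration, namely $|X_T|=3$ with a vertex $a\in A_T$ satisfying $N_T(a)=X_T$; your observation that the hypothesis on $G$ forces some element of $X_T$ to be a block vertex $v_{i^*}$ (attachment vertices have degree $3$ in $G$, hence lie outside $X$), so that one may delete only $X_T\setminus\{v_{i^*}\}$, note that $av_{i^*}$ is forced into the tree matching, and absorb the resulting two-vertex avoidance in $H_{i^*}$ via property $(i)$ of Table \ref{table1}, replaces the paper's recursive appeal to Lemma \ref{l6}. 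What your version buys is a self-contained, single-pass argument that makes explicit exactly where the ``no vertex with three neighbours in $X$'' hypothesis is used; what the paper's version buys is brevity, since Lemma \ref{l6} already encapsulates the bookkeeping for all blocks not containing the critical one. One small repair: your assertion that the exclusion of $G_2$ and $G_4$ implies the host tree is not a single vertex is not quite right, since $G_1$ and $G_3$ themselves lie in $\mathcal{G}$ with trivial host tree; but in that degenerate case $G=H_i\in\{G_1,G_3\}$ and property $(ii)$ of Table \ref{table1} finishes directly, so the conclusion $H_i\in\{G_1,G_3\}$ you need survives in all cases and nothing essential is lost.
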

\begin{proof}
Let $X = \{x_1,x_2,x_3\}$.
If $X \subseteq D_{T}^\prime$, then, by Lemma \ref{l2} $(ii)$,
$T$ contains a maximum matching $M$ of
size $\frac{n(T)-1}{3}$ 
that does not cover any vertex in $X$.
Let $M^\prime$ be the corresponding
uniquely restricted matching in $G-X$.
Let $i \in [k]$.
Since $|V(M^\prime) \cap V(H_i)| \leq 1$, 
the subgraph $H_i$ of $G$ contains a  
uniquely restricted matching $M_i$ 
of size $\frac{n(H_i)-1}{3}$
such that $M_i$ is disjoint from $M^{\prime}$,
see Table \ref{table1}.
Since the matching 
$M^{\prime} \cup \bigcup_{i=1}^k{M_i}$
is uniquely restricted in $G-X$,
it follows that
\begin{align*}
  \nu_{ur}(G-X) \geq \frac{n(T)-1}{3} + \sum_{i=1}^k \frac{n(H_i)-1}{3} 
	      = \frac{n(G)-1}{3}. 
\end{align*}
Now, we assume that $X \not\subseteq D_T^\prime$.
Let $i$ be such that it minimizes $|V(H_i) \cap X|$ among
all non-empty intersections, and let $G^\prime$ arise from
$G$ by contracting the edges in $H_i$.
First, we assume that 
$|V(H_i) \cap X| = 1$, say $x_1 \in V(H_i)$.
By Lemma \ref{l6}, $G^{\prime}$ contains a
uniquely restricted matching $M$ of size at least
$\frac{n(G^{\prime})-1}{3}$
that does not cover $x_2$ and $x_3$.
Let $M^\prime$ be the corresponding 
uniquely restricted matching in $G-X$.
Since $|V(M^\prime) \cap V(H_i)| \leq 1$,
it follows that
$H_i$ contains a 
uniquely restricted matching $M_i$
of size
$\frac{n(H_i)-1}{3}$
that does not cover $x_1$ and is disjoint from $M^\prime$, see Table \ref{table1}.
Since the matching 
$M^{\prime} \cup M_i$ is
uniquely restricted in $G-X$, it follows that
\begin{align*}
 \nu_{ur}(G-X) \geq \frac{n(G^\prime)-1}{3} + \frac{n(H_i)-1}{3} 
	      = \frac{n(G)-1}{3}.
\end{align*}
Hence, we may assume that $|V(H_i) \cap X| \geq 2$.
By Lemma \ref{l6}, $G^{\prime}$ contains a
uniquely restricted matching $M$ of size at least
$\frac{n(G^{\prime})-1}{3}$
that does not cover $X \setminus V(H_i)$ and $v_i$,
because $|X \setminus V(H_i)| \leq 1$.
Note that $V(H_i) \cap V(M) = \emptyset$.
If $|V(H_i) \cap X| = 2$, then $H_i$ contains 
a matching $M_i$
of size $\frac{n(H_i)-1}{3}$
that does not cover the vertices in $X \cap V(H_i)$, see Table \ref{table1}.
Next, we assume that $|V(H_i) \cap X| = 3$. 
Every vertex in $V(H_i)$ that is adjacent to a vertex
in $A_T$ has degree $3$ in $G$.
Since every vertex in $X \cap V(H_i)$ has degree $2$ in $G$,
this implies that that no vertex in $X \cap V(H_i)$
is adjacent to a vertex in $A_T$.
If $H_i$ is isomorphic to $G_2$ or $G_4$,
then $H_i$ contains exactly $3$ vertices of degree $2$,
which must all belong to $X$. Therefore,
no vertex in $V(H_i)$ is adjacent to a vertex in $A_T$,
which, since $G$ is connected, implies that
$G$ is isomorphic to $H_i$, a contradiction. 
Hence, $H_i$ is isomorphic to $G_1$ or $G_3$, 
and thus contains a 
uniquely restricted matching $M_i$
of size
$\frac{n(H_i)-1}{3}$
that does not cover the vertices in $X \cap V(H_i)$, see Table \ref{table1}.
Since $V(M) \cap V(H_i) = \emptyset$, it follows that
$M \cup M_i$ is a uniquely restricted matching
in $G-X$ and of size at least $\frac{n(G)-1}{3}$.
\end{proof}

\section{Proof of Theorem \ref{t3}}
\begin{proof}[Proof of Theorem \ref{t3}]
Suppose, for a contradiction, that the theorem is false, and let 
$G$ be a counterexample of minimum order. $G$ is connected,
 not in $\mathcal{G}$, and not isomorphic to $H_1$ or $H_2$.
 Therefore, any proper induced subgraph $H$ of $G$ satisfies
 \begin{align*}
  \nu_{ur}(H) \geq \frac{n(G)-\kappa_H(\mathcal{G})}{3}.
 \end{align*}
 \begin{claim} \label{c1}
 Any induced subgraph $H$ of $G$ that belongs to $\mathcal{G}$
 satisfies $m_G(V(H)) \geq 2$.
\end{claim}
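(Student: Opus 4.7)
The plan is to argue by contradiction, assuming $m_G(V(H)) \leq 1$ for some induced subgraph $H \in \mathcal{G}$. If $m_G(V(H))=0$, then $V(H)$ is a union of components of the connected graph $G$, so $H=G$ and $G\in\mathcal{G}$, a contradiction. So I take $m_G(V(H))=1$ with unique crossing edge $uv$ ($u \in V(H)$, $v \notin V(H)$), set $G^{**}:=G[(V(G)\setminus V(H))\cup\{u\}]$ --- namely, $G-V(H)$ together with the pendant $u$ attached to $v$ via the bridge $uv$ --- and note that $G^{**}$ is a connected proper induced subgraph of $G$, not cubic and not isomorphic to $H_1$ or $H_2$, because $u$ is a pendant. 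Since $d_H(u)=d_G(u)-1\leq 2$, inspecting where $u$ sits in $H$ (either $u \in D_T^\prime$ for the host tree $T$ of $H$, or $u$ is a degree-$2$ vertex of a block of $H$) places $u$ in $D_T^\prime \cup \{w\in V(H)\mid d_H(w)=2\}$, so Lemma~\ref{l6} applied to $H$ with $X=\{u\}$ furnishes a uniquely restricted matching $M$ in $H$ of size $(n(H)-1)/3$ that avoids $u$.

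By the minimality of the counterexample,
\[
\nu_{ur}(G^{**}) \geq \frac{n(G^{**})-\kappa_{G^{**}}(\mathcal{G})}{3}.
\]
For any maximum uniquely restricted matching $M'$ in $G^{**}$, the union $M\cup M'$ is a matching in $G$ (the only possible vertex conflict $u$ is not in $V(M)$) and it is uniquely restricted, since $uv$ is a bridge and every alternating cycle is forced to lie entirely within $H$ or within $G-V(H)$, violating the unique restrictedness of $M$ or of $M'$ there. Summing sizes yields $3\nu_{ur}(G) \geq (n(H)-1)+n(G^{**})-\kappa_{G^{**}}(\mathcal{G}) = n(G) - \kappa_{G^{**}}(\mathcal{G})$. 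On the other hand the counterexample hypothesis gives $3\nu_{ur}(G) \leq n(G)-1$, so $\kappa_{G^{**}}(\mathcal{G}) \geq 1$, whence $G^{**} \in \mathcal{G}$.

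It remains to derive the sought contradiction by exhibiting $G\in\mathcal{G}$. Let $T^{**}$ be the host tree of $G^{**}$; since $u$ is a pendant, $u \in D_{T^{**}}^\prime$ with unique neighbor $v \in A_{T^{**}}$ of degree $3$. If $u \in D_T^\prime$, I would identify $u$ in $T$ and in $T^{**}$ to form a candidate host tree $T^G = T \cup T^{**}$ for $G$; if instead $u$ lies in a block $H_j$ of $H$ (so $u$ is necessarily an unattached degree-$2$ vertex of $H_j$), I would set $T^G = T \cup (T^{**}-u)$ together with an extra edge from the $D$-vertex of $T$ replaced by $H_j$ to $v$, and in $T^G$ replace that vertex by $H_j$ with its attached-ports set enlarged by $u$. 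A parameter count ($|A_{T^G}|=|A_T|+|A_{T^{**}}|$, $|D_{T^G}|=2|A_{T^G}|+1$, $n(T^G)=3|A_{T^G}|+1$, every $A_{T^G}$-vertex of degree exactly $3$, and $A_{T^G}$ independent) places $T^G$ in $\mathcal{T}$, and the combined block replacements realise $G$, placing $G$ in $\mathcal{G}$. The main obstacle will be verifying this last gluing, in particular the block case: one must check that $H_j$ still belongs to $\mathcal{B}$ after absorbing $uv$ as an additional attached port, which follows from $H_j$ having had $u$ unattached in $H$ and so having at least one spare degree-$2$ vertex.
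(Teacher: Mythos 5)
Your argument fails in the case $n(H)=1$, and this case cannot be dropped: a single vertex is a tree in $\mathcal{T}$ and hence belongs to $\mathcal{G}$ (this is exactly the instance of the claim that is later combined with Claim \ref{c4} to obtain $\delta(G)=2$, and it is why Claim \ref{c2} carries the extra hypothesis $n(H)\ge 2$). For $H=K_1=\{u\}$ your graph $G^{**}=G[(V(G)\setminus V(H))\cup\{u\}]$ is all of $G$, so it is not a proper induced subgraph, the appeal to minimality is circular, and the whole chain collapses. The defect is not cosmetic: for a leaf $u$ with neighbour $v$, applying minimality to $G-u$ only yields $\nu_{ur}(G)\ge\nu_{ur}(G-u)\ge\frac{n(G)-1}{3}$ when $G-u\notin\mathcal{G}$, which does not contradict the counterexample hypothesis $\nu_{ur}(G)<\frac{n(G)}{3}$. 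To deal with a leaf one has to delete $v$ as well and analyse the at most two components of $G-\{u,v\}$ --- which is exactly what the paper does: it sets $G'=G-(V(H)\cup\{v\})$ uniformly for all $n(H)$, wins by counting unless both components of $G'$ lie in $\mathcal{G}$, and in that remaining case glues the three host trees at the new vertex $v$ and invokes Theorem \ref{t2} (i) to conclude $G\in\mathcal{G}$.

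For $n(H)\ge 2$ your route is genuinely different from the paper's (you keep $v$ and the pendant vertex $u$, deduce $G^{**}\in\mathcal{G}$, and then glue the two host trees at the cut vertex $u$), and it can be made to work, but the decisive step is only asserted. A ``parameter count'' does not place $T^G$ in $\mathcal{T}$: membership in $\mathcal{T}$ and the admissibility of the block replacements require identifying the Gallai--Edmonds sets of the glued tree, i.e.\ proving $D_{T^G}=D_T\cup D_{T^{**}}$ (respectively $D_T\cup(D_{T^{**}}\setminus\{u\})$ in the block case), so that every vertex of $A_{T^G}$ has degree $3$ and every replaced vertex still lies in $D_{T^G}$. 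This does hold --- split a maximum matching of $T^G$ at the cut vertex $u$, respectively at the new tree edge joining the replaced vertex to $v$, and use Lemma \ref{l2} (i) together with the fact that vertices of $A_T$ and $A_{T^{**}}$ are covered by every maximum matching of $T$ and $T^{**}$ --- but that verification is the real content of the gluing; the point you single out as the main obstacle (a spare degree-$2$ port of $H_j$) is the easy part, and ``$H_j$ still belongs to $\mathcal{B}$'' is not even in question since $H_j$ itself is unchanged.
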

\begin{proof}
 Let $H \in \mathcal{G}$ be an induced subgraph of $G$
 and suppose, for a contradiction, that $uv$ is the only edge in $E_G(V(H))$ such that
 $u \in V(H)$.
 Let $G^\prime = G-(V(H) \cup \{v\})$ and suppose that at most one 
 component of $G^\prime$ is in $\mathcal{G}$.
 By Lemma \ref{l6}, $H$ contains a uniquely restricted matching $M_H$
 of size $\frac{n(H)-1}{3}$ that does not cover $u$.
 Furthermore, $G^\prime$ contains a uniquely restricted matching $M^\prime$
 of size at least $\frac{n(G^\prime)-1}{3}$.
 Since the matching $M_H \cup M^\prime \cup \{uv\}$ is uniquely restricted in $G$,
 it follows that
 \begin{align*}
  \nu_{ur}(G) \geq \frac{n(H)+n(G^\prime)-2}{3} + 1
	      = \frac{n(G)}{3}, 
 \end{align*}
which is a contradiction. 
Hence, we may assume that 
$G^\prime$ has two components, and that	
both components 
$H_1$ and $H_2$ of $G^\prime$ belong to $\mathcal{G}$.
Let $T_H$, $T_1$, and $T_{2}$ be the host trees of $H$, $H_1$, and $H_2$, respectively.
Let $w_H$, $w_1$, and $w_2$ be the vertices in
$T_H$, $T_1$, and $T_2$ such that their corresponding vertices in $G$
are adjacent to $v$.
Let $T$ arise from $T_H \cup T_1 \cup T_2$ by adding $v$
along with the edges $vw_H$, $vw_1$, and $vw_2$.
Lemma \ref{l1} implies that $w_H$ belongs to $D_{T_H}$, which implies that 
$v$ belongs to any maximum matching in $T$.
Hence, Theorem \ref{t2} $(i)$ implies that $D_{T} = D_{T_H} \cup D_{T_1} \cup D_{T_2}$.
In particular, $G$ arises from $T$ by replacing some vertices of $D_T$ by 
graphs in $\mathcal{B}$, which implies that $G \in \mathcal{G}$, a contradiction.
\end{proof}
\begin{claim} \label{c2}
Any induced subgraph $H$ of $G$ that belongs to $\mathcal{G}$
satisfies $m_G(V(H)) \geq 3$ if $n(H) \geq 2$.
\end{claim}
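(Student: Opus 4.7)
The plan is to suppose, for a contradiction, that $m_G(V(H)) = 2$, and derive a contradiction with the minimality of $G$. Let the two edges leaving $V(H)$ be $e_1 = u_1 v_1$ and $e_2 = u_2 v_2$, with $u_1, u_2 \in V(H)$ and $v_1, v_2 \in V(G) \setminus V(H)$ (possibly $v_1 = v_2$). First I would verify that Lemma \ref{l6} applies to $H$ with $X = \{u_1, u_2\}$: since $G$ is subcubic and each $u_i$ has one external neighbor, $d_H(u_i) \leq 2$; if $d_H(u_i) = 1$ then $u_i$ is a leaf of $H$, which, since every block in $\mathcal{B}$ has minimum degree $2$, forces $u_i$ to be a leaf of the host tree $T_H$ and hence to lie in $D_{T_H}^\prime$. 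Lemma \ref{l6} therefore produces a uniquely restricted matching $M_H$ in $H$ of size $\frac{n(H)-1}{3}$ avoiding both $u_1$ and $u_2$.

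Next I would split into the cases $v_1 \neq v_2$ and $v_1 = v_2 =: v$. In the first case set $G^\prime = G - V(H)$; in the second set $G^\prime = G - V(H) - v$ and note that $v$ has at most one further neighbor $w \in V(G^\prime)$. For each component $F$ of $G^\prime$ the induction hypothesis gives a uniquely restricted matching $M_F$ of size at least $\frac{n(F) - \kappa_F(\mathcal{G}) - \kappa_F(\mathcal{G}_3) - \kappa_F(\{H_1, H_2\})}{3}$; when $F \in \mathcal{G}$ contains $v_1$, $v_2$, or $w$, a further application of Lemma \ref{l6} arranges that $M_F$ also avoids that distinguished vertex. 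Assembling $M = M_H \cup \bigcup_F M_F \cup \{e_1, e_2\}$, or, in the case $v_1 = v_2 = v$, $M_H \cup \bigcup_F M_F \cup \{e_1, vw\}$ when $\deg_G(v) = 3$ and $M_H \cup \{e_1\}$ when $\deg_G(v) = 2$, yields in the generic configuration a matching of size at least $\frac{n(G) + 5 - c}{3}$, where $c$ is the total number of exceptional components of $G^\prime$; for small $c$ this exceeds $\frac{n(G)}{3}$ and contradicts the fact that $G$ is a counterexample.

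When $c$ is too large for the counting to close, I would instead show directly that $G \in \mathcal{G}$, which contradicts the choice of $G$. Following the closing paragraph of Claim \ref{c1}'s proof, the host tree $T$ of $G$ is assembled from $T_H$ and from the host trees of the exceptional components of $G^\prime$ by identifying $v_1, v_2$ (or $v$) as new vertices in the appropriate partite sets; Lemmas \ref{l1}, \ref{l3}, and \ref{l4} then certify that the assembled tree lies in $\mathcal{T}$ and that its Gallai-Edmonds decomposition is consistent with the block replacements inherited from $H$ and the exceptional components.

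The hardest part will be ensuring that the glued matching $M$ is uniquely restricted. With a single bridge, as in Claim \ref{c1}, no $M$-alternating cycle can exist; but two bridges permit an alternating cycle consisting of an alternating $v_1$--$v_2$ path in $G^\prime$, the bridge $e_2$, an alternating $u_2$--$u_1$ path in $H$, and the bridge $e_1$. I would rule this out by a joint choice of $M_H$ and the $M_F$ so that at least one of these two paths is forced to have the wrong parity---for instance, additionally avoiding $v_2$ on the outside via a second application of Lemma \ref{l6} or Lemma \ref{l7}---and, in the residual configurations where no such choice is available, by dropping one of $e_1, e_2$ and absorbing the unit loss into the slack. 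A secondary bookkeeping task is the verification in the previous paragraph that the candidate host tree of $G$ has all $A_T$-vertices of degree at most $3$ and that the block replacements align correctly; I expect this to go through as in Claim \ref{c1} but to require care in each subcase.
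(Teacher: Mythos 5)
Your plan diverges from the paper's proof in a way that leaves a real hole. The paper, in the case of two disjoint cut edges $uv$ and $wx$, deletes the \emph{outside} endpoint $v$ together with $V(H)$, takes $M_H$ avoiding \emph{both} $u$ and $w$ (Lemma \ref{l6}), and adds only the single edge $uv$: then $w$ is uncovered, so no $M$-alternating cycle can cross the cut, and the count $\frac{n(H)-1}{3}+\frac{n(G)-n(H)-2}{3}+1=\frac{n(G)}{3}$ closes because the minimality bound applies to $G-(V(H)\cup\{v\})$ with no requirement on which vertices its matching covers (Claim \ref{c1} plus $m_G(V(H)\cup\{v\})\le 3$ guarantee at most one $\mathcal{G}$-component). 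You instead keep $v_1,v_2$ and want $e_1,e_2\in M$, which forces the outside matchings to avoid $v_1$ and $v_2$; but Lemma \ref{l6} is only available for components in $\mathcal{G}$, and for a non-exceptional component the induction hypothesis gives no control over coverage. If $v_1$ (and $v_2$) lie in such components you can add neither bridge, and without any bridge edge your count is only $\frac{n(H)-1}{3}+\frac{n(G)-n(H)}{3}=\frac{n(G)-1}{3}$, which does not contradict the choice of $G$. Your fallback of dropping one bridge edge is the right instinct for the unique-restriction issue (the proposed alternative, making $M_F$ avoid $v_2$, does nothing, since $e_2\in M$ covers $v_2$ anyway and an alternating cycle may simply use $e_2$ as its $M$-edge at $v_2$), but it does not repair the coverage problem at the remaining outer endpoint; the clean repair is exactly the paper's deletion of that endpoint.

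Two further omissions: you never treat the case $u_1=u_2$, where both cut edges leave $H$ from one vertex $u$; there your assembly is impossible ($e_1,e_2$ share $u$), and even one bridge edge admits an alternating cycle through $u$ using $e_1\in M$ and $e_2\notin M$ unless handled differently --- the paper instead considers $G-(V(H)\setminus\{u\})$ and $H-u$, adds no bridge edge, and uses Claim \ref{c1} (via $m_G(V(H)\setminus\{u\})=1$) to get the count $\frac{n(H)-1}{3}+\frac{n(G)-n(H)+1}{3}=\frac{n(G)}{3}$. In your case $v_1=v_2=v$ the proposed matching $M_H\cup\bigcup_F M_F\cup\{e_1,vw\}$ is not even a matching, as $e_1$ and $vw$ share $v$ (the paper adds only $uv$ with $M_H$ avoiding both $u$ and $w$). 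Finally, your ``$c$ too large'' branch with a reconstruction of $G\in\mathcal{G}$ is unnecessary: since only two edges leave $V(H)$, Claim \ref{c1} already forces at most one exceptional component in every configuration considered here, and the paper's proof of this claim needs no host-tree reassembly at all.
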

\begin{proof}
 Let $H \in \mathcal{G}$ be an induced subgraph of $G$ with order at least $2$ 
 and suppose, for a contradiction, that
 $uv$ and $wx$ are the only two edges in $E_G(V(H))$ such that
 $u, w \in V(H)$. 
 First, we assume that $uv$ and $wx$ are disjoint.
 Let $G^\prime = G-(V(H) \cup \{v\})$.
 By Lemma \ref{l6}, $H$ contains a uniquely restricted matching $M_H$
 of size $\frac{n(H)-1}{3}$ that does not cover $u$ and $w$.
 Since $m_G(V(H) \cup \{v\}) \leq 3$, Claim \ref{c1} implies that
 at most one component of $G^\prime$ 
 belongs to $\mathcal{G}$,
 which, by the choice of $G$,
 implies that $G$
 contains a uniquely restricted matching $M^\prime$
 of size at least $\frac{n(G^\prime)-1}{3}$.
 Since the matching $M_H \cup M^\prime \cup \{uv\}$ is uniquely restricted in $G$,
 it follows that
 \begin{align*}
  \nu_{ur}(G) \geq \frac{n(H)+n(G^\prime)-2}{3} + 1
	      = \frac{n(G)}{3}, 
 \end{align*}
which is a contradiction.
Hence, we may assume that $uv$ and $wx$ share a common vertex.
First, we assume that $u=w$. Let $G^\prime = G-(V(H) \setminus \{u\})$.
Since $m_G(V(H)\setminus \{u\}) = 1$, it follows, by Claim \ref{c1}, that neither
$G^\prime$ nor $H-u$ belong to $\mathcal{G}$, which implies that
$G^\prime$ and $H-u$ contain uniquely restricted matchings $M^\prime$ and $M_H$
of sizes at least $\frac{n(G)-n(H)+1}{3}$ and $\frac{n(H)-1}{3}$, respectively.
Since the matching $M_H \cup M^\prime$ is uniquely restricted in $G$,
it follows that
\begin{align*}
  \nu_{ur}(G) \geq \frac{n(H)-1 + n(G) -n(H) +1}{3}
	      = \frac{n(G)}{3}, 
 \end{align*}
which is a contradiction.
Hence, we may assume that $v=x$.
Let $G^\prime = G-(V(H) \cup \{v\})$.
By Lemma \ref{l6}, $H$ contains a uniquely restricted matching $M_H$
of size $\frac{n(H)-1}{3}$ that does not cover $u$ and $w$.
Since $m_G(V(H)\cup \{v\}) \leq 1$, it follows, by Claim \ref{c1}, that
$G^\prime$ is not in $\mathcal{G}$, which implies that
$G^\prime$ contains a uniquely restricted matching $M^\prime$
of size at least $\frac{n(G)-n(H)-1}{3}$.
Since the matching $M_H \cup M^\prime \cup \{uv\}$ is uniquely restricted in $G$,
it follows that
\begin{align*}
  \nu_{ur}(G) \geq \frac{n(H)-1 + n(G) -n(H) - 1}{3} +1
	      > \frac{n(G)}{3}, 
 \end{align*}
which is a contradiction.
\end{proof}
\begin{claim} \label{c3}
 No induced subgraph $H$ of $G$ is isomorphic to a graph in $\mathcal{B} \setminus \mathcal{B}_c$.
\end{claim}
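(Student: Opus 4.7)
I would proceed by contradiction and assume that $G$ contains an induced subgraph $H$ isomorphic to some $G_j$ with $j \in \{5,6,7,8,9\}$. Each $G_j$ itself lies in $\mathcal{G}$ via the trivial host tree consisting of a single vertex in $D_T$ replaced by $G_j$, so Claims \ref{c1} and \ref{c2} apply to $H$. For $j \in \{5,8\}$, inspection of Figures \ref{fig2} and \ref{fig3} shows that $G_5$ and $G_8$ each have only two vertices of degree $2$ (the middle and the end of the attached path of length $2$). Since $G$ is subcubic, only degree-$2$ vertices of $H$ can have neighbors outside $V(H)$, so $m_G(V(H)) \leq 2$, contradicting the bound $m_G(V(H)) \geq 3$ from Claim \ref{c2}.

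For $j \in \{6,7,9\}$ the graph $H$ has exactly three pairwise non-adjacent degree-$2$ vertices $a_1,a_2,a_3$, and the same degree count combined with Claim \ref{c2} yields $m_G(V(H))=3$ together with a unique external neighbor $b_i \in V(G)\setminus V(H)$ for each $a_i$. Property $(ii)$ of Table \ref{table1} holds for $G_6, G_7, G_9$, and the set $X=\{a_1,a_2,a_3\}$ satisfies its hypothesis (a direct inspection shows that no vertex of $V(H)\setminus X$ has three neighbors in $X$), so $H-X$ contains a uniquely restricted matching $M_H$ of size $(n(H)-1)/3$. I would then set $G^\prime = G - V(H)$ and, using the minimality of $G$ applied to the proper induced subgraph $G^\prime - \{b_1,b_2,b_3\}$, extract a uniquely restricted matching $M^\prime$ of size at least $(n(G^\prime)-3-K)/3$, where $K$ counts the components of $G^\prime - \{b_1,b_2,b_3\}$ that lie in $\mathcal{G} \cup \mathcal{G}_3 \cup \{H_1,H_2\}$. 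When the $b_i$ are pairwise distinct, combining $M_H$, $M^\prime$, and the three external edges $\{a_ib_i\}$ produces a matching in $G$ of size at least $(n(G)+5-K)/3$, which exceeds $n(G)/3$ whenever $K \leq 5$; the cases in which two or three $b_i$ coincide are handled analogously using property $(i)$ of Table \ref{table1} and fewer external edges.

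Two technical points require care. First, one must verify that the combined matching is uniquely restricted in $G$: an alternating cycle crossing the boundary of $V(H)$ would have to combine an $M_H$-alternating path in $H$ linking two $M_H$-exposed vertices of $X$ with an $M^\prime$-alternating path in $G^\prime$ between the corresponding $b_i$, and via an augmenting argument such a configuration would either contradict the maximality of $M^\prime$ or allow an improved uniquely restricted matching in $G$ that already exceeds $n(G)/3$. The main obstacle, however, will be the case when $K$ is too large to close the counting argument (or when coincidences among the $b_i$ force $K$ to grow after their removal). In that regime, I expect one must exploit the rigid structure of the host trees of the $\mathcal{G}$-components of $G^\prime$, together with $H$ viewed as a $\mathcal{B}$-block attached through the edges $a_ib_i$, to reassemble a host tree witnessing $G \in \mathcal{G}$, contradicting the hypothesis that $G$ is not in $\mathcal{G}$. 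This reassembly will parallel the concluding step in the proof of Claim \ref{c1}, where two $\mathcal{G}$-components of $G^\prime$ are combined with the removed cut vertex to yield a valid host tree.
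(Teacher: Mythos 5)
Your treatment of $G_5$ and $G_8$ via Claim \ref{c2} is exactly the paper's. For $G_6$, $G_7$, $G_9$ your route diverges and has two genuine gaps. The first is the unique-restriction of the combined matching. Your proposed fix --- that an alternating cycle crossing the boundary of $V(H)$ ``would either contradict the maximality of $M^\prime$ or allow an improved uniquely restricted matching'' --- is not a valid argument: an alternating cycle does not change the size of a matching, so it contradicts no maximality, and exchanging along it yields a matching of the same size with no guarantee of being uniquely restricted. Moreover, property $(ii)$ of Table \ref{table1} only gives \emph{some} uniquely restricted matching of $H$ of size $\frac{n(H)-1}{3}$ avoiding the three degree-$2$ vertices; such a matching may admit an $M_H$-alternating path between two of them, and then gluing with the outside (especially since you also put the pendant edges $a_ib_i$ into your matching) can create an alternating cycle. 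What is actually needed, and what the paper verifies directly from the dashed matchings in Figures \ref{fig2} and \ref{fig3}, is the stronger property that $M_H$ avoids $X$ \emph{and} there is no $M_H$-alternating path between any two vertices of $X$; this does not follow from Table \ref{table1} alone.

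The second gap is the counting. Deleting $V(H)$ together with $\{b_1,b_2,b_3\}$ can leave up to six components, some of which may lie in $\mathcal{G}$ (or even become cubic or isomorphic to $H_1,H_2$ after removing the $b_i$), so your bound $\frac{n(G)+5-K}{3}$ need not beat $\frac{n(G)}{3}$; the promised fallback of ``reassembling a host tree'' to show $G\in\mathcal{G}$ is only a sketch, and the coincident-$b_i$ cases additionally break the construction of the matching itself. The paper sidesteps all of this by removing only $V(H)\setminus X$ and keeping the three degree-$2$ vertices: every component of $G^\prime=G-(V(H)\setminus X)$ then contains a vertex of $X$ of degree at most $1$, so there are at most three components, none cubic and none isomorphic to $H_1$ or $H_2$, and Claims \ref{c1} and \ref{c2} cap the number of $\mathcal{G}$-components at two. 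This gives $\nu_{ur}(G^\prime)\geq\frac{n(G)-n(H)+1}{3}$, and adding the carefully chosen $M_H$ (with no external edges placed in the matching at all) yields $\nu_{ur}(G)\geq\frac{n(G)}{3}$ directly. To repair your proof you would have to both establish the no-alternating-path property of $M_H$ and close the large-$K$ and coincident-$b_i$ cases, at which point the paper's cut is simply the cleaner one.
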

\begin{proof}
Suppose, for a contradiction, that
$H$ is an induced subgraph of $G$ that is isomorphic to a graph in 
$\mathcal{B} \setminus \mathcal{B}_c$.
By Claim \ref{c2}, $H$ is not isomorphic to $G_5$ or $G_8$
since both have exactly two vertices of degree $2$.
Let $X$ denote the set of vertices of degree $2$ in $H$,
and let $G^\prime = G-(V(H) \setminus X)$.
Note that $|X| = 3$, which implies that $G^\prime$ contains at most $3$ components.
Since $G$ is not isomorphic to $H$, there is a vertex $u \in X$ that is not isolated in $G^\prime$.
Let $H_1$ be the component of $G^\prime$ that contains $u$.
If $X \cap V(H_1) = \{u\}$, then, by Claim \ref{c2}, $H_1 \not\in \mathcal{G}$.
Therefore, at most two components of $G^\prime$ belong to $\mathcal{G}$.
If $X \cap V(H_1) \setminus \{u\} \neq \emptyset$, 
then $G^\prime$ contains at most $2$ components.
In both cases, $G^\prime$ contains at most $2$ components that belong to 
$\mathcal{G}$, which implies that $G^\prime$ contains a uniquely restricted matching 
$M^\prime$ of size at least $\frac{n(G)-n(H)+3-2}{3}$.
As illustrated in Figure \ref{fig2} and \ref{fig3}, $H$ contains 
a uniquely restricted matching $M_H$ of size $\frac{n(H)-1}{3}$
that does not cover the vertices in $X$
such that there is no $M_H$-alternating path between any
two vertices in $X$, which implies that the matching
$M^\prime \cup M_H$ is uniquely restricted in $G$.
This implies that
\begin{align*}
  \nu_{ur}(G) \geq \frac{n(G)-n(H)+1 + n(H)-1}{3}
	      = \frac{n(G)}{3}, 
 \end{align*}
which is a contradiction.
\end{proof}
\begin{claim} \label{c4}
 $G$ is not cubic.
\end{claim}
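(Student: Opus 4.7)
My plan is to argue by contradiction: suppose $G$ is cubic. Since $G$ is connected, cubic, of girth at least $5$, and not isomorphic to $H_1$ or $H_2$, the counterexample hypothesis amounts to $\nu_{ur}(G)<\frac{n(G)-1}{3}$. I will delete the vertex set of a shortest cycle of $G$, apply the minimality of $G$ to the remainder, and extend the resulting uniquely restricted matching by edges of the deleted cycle.

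Let $C$ be a shortest cycle of $G$ with $V(C)=\{v_1,\ldots,v_g\}$ and let $u_i$ denote the unique neighbor of $v_i$ outside $V(C)$. The girth condition forces the $u_i$ to be pairwise distinct and forbids any $u_i$ to have two neighbors on $C$. Set $G'=G-V(C)$; this is a subcubic graph of girth at least $5$ whose degree-$2$ vertices are exactly $u_1,\ldots,u_g$. Applying the minimality of $G$ to each component of $G'$ and summing, I would obtain
\[
\nu_{ur}(G')\ \geq\ \frac{n(G')-t}{3}\ =\ \frac{n(G)-g-t}{3},
\]
where $t=\kappa_{G'}(\mathcal{G})+\kappa_{G'}(\mathcal{G}_3)+\kappa_{G'}(\{H_1,H_2\})$. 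To reach a contradiction, I then plan to extend a well-chosen URM of $G'$ by at least $\lceil(g+t-1)/3\rceil$ independent edges of $C$, which would yield a URM of $G$ of size at least $\frac{n(G)-1}{3}$.

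First I would bound $t$. Any component of $G'$ lying in $\mathcal{G}_3$ or in $\{H_1,H_2\}$ is cubic and therefore contains none of the $u_i$, which, since $G$ is connected, is impossible. Components of $G'$ in $\mathcal{G}$ are controlled using Claims~\ref{c1}--\ref{c3}: a component together with an appropriate subset of $V(C)$ is viewed as an induced subgraph of $G$ in $\mathcal{G}$, and the claims restrict both the number of such exceptional components and their pattern of attachment to $C$. If, in addition, the girth of $G$ were at least $7$, I would invoke the result of~\cite{fhr} for non-cubic, non-tree subcubic graphs of girth at least $7$ applied to $G'$, which already gives a URM of $G'$ too large for a counterexample to exist; thus effectively only $g\in\{5,6\}$ remains.

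The final step is the extension. Using Lemmas~\ref{l6} and~\ref{l7}, I would adjust the URMs inside the exceptional components so as to leave a prescribed set of $u_i$ uncovered, and then add to the matching the edges of $C$ joining pairs of those uncovered $u_i$'s, using at most $\lfloor g/2\rfloor$ independent edges in total. Any $M$-alternating cycle in the combined matching would have to enter and leave $V(C)$, and the girth-$\geq 5$ condition together with the chosen avoidance pattern would prevent such a cycle. The hardest part will be the bookkeeping in the case $g=6$ with $t$ near its maximum allowable value: the matching deficits of several exceptional components must be absorbed simultaneously by the cycle edges, and verifying that such a simultaneous absorption is always possible, while still forbidding $M$-alternating cycles, will require a detailed case analysis based on which $u_i$'s lie in which exceptional components of $G'$.
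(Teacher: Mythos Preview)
Your extension step has a genuine gap. When a component $K$ of $G'=G-V(C)$ does \emph{not} lie in $\mathcal{G}$, the minimality of $G$ only hands you some uniquely restricted matching $M_K$ of size at least $n(K)/3$; it gives no control over which of the $u_i$ lying in $K$ are covered. Lemmas~\ref{l6} and~\ref{l7} let you prescribe uncovered vertices only inside $\mathcal{G}$-components, so your ``chosen avoidance pattern'' is unavailable precisely where it is needed most. After adding, say, $v_1v_2\in E(C)$ to the matching, nothing rules out an $M$-alternating cycle consisting of $v_1v_2$, then $v_2u_2$, then an $M_K$-alternating path in $K$ from $u_2$ to $u_1$, then $u_1v_1$. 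The appeal to \cite{fhr} for $g\ge 7$ does not close this either: applied to $G'$ it yields only $\nu_{ur}(G')\ge n(G')/3=(n(G)-g)/3$, still short of $(n(G)-1)/3$ by $(g-1)/3$, with the same uncontrolled extension left over. There is also an arithmetic obstruction at $g=6$, $t=2$: your count forces three independent edges of the $6$-cycle into the matching, but any such triple turns $C$ itself into an $M$-alternating cycle.

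The paper sidesteps all of this by deleting a single vertex. Choose $u$ to be a leaf of a spanning tree of $G$, so $G'=G-u$ is connected and not cubic. If $G'\notin\mathcal{G}$, induction gives $\nu_{ur}(G)\ge\nu_{ur}(G')\ge (n(G)-1)/3$ at once. If $G'\in\mathcal{G}$, then Claim~\ref{c3} forces $G'\in\mathcal{G}_c$, and $G'\not\cong G_2,G_4$ since otherwise $G\cong H_1$ or $H_2$; Lemma~\ref{l7} with $X=N_G(u)$ (three vertices, no common neighbour by the girth condition) then yields a uniquely restricted matching of $G'$ of size $(n(G)-2)/3$ avoiding all of $N_G(u)$, and adding a single edge $uv$ is trivially safe. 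Removing one vertex instead of a whole cycle leaves only three neighbours to uncover, which is exactly the situation Lemma~\ref{l7} is built for.
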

\begin{proof}
Suppose, for a contradiction, that $G$ is cubic.
Let $u$ be a leaf of some spanning tree of $G$,
and let $G^\prime = G-u$. Note that $G^\prime$ is connected.
If $G^\prime$ does not belong to $\mathcal{G}$,
then 
\begin{align*}
 \nu_{ur}(G) \geq \nu_{ur}(G^\prime) \geq \frac{n(G)-1}{3},
\end{align*}
which is a contradiction.
Hence, we may assume that $G^\prime$ belongs to $\mathcal{G}$.
Clearly, $G^\prime$ is not isomorphic to $G_2$ or $G_4$
since otherwise $G$ would be isomorphic to $H_1$ or $H_2$.
By Claim \ref{c3}, $G^\prime \in \mathcal{G}_c$ and,
by the girth condition, no vertex in $V(G^\prime) \setminus N_G(u)$
has $3$ neighbors in $N_G(u)$,
which, by Lemma \ref{l7}, implies that 
$G^\prime$ contains a uniquely restricted matching $M^\prime$
of size at least $\frac{n(G)-2}{3}$
that does not cover the vertices in $N_G(u)$.
Let $v$ be any neighbor of $u$.
The matching
$M^\prime \cup \{uv\}$ is uniquely restricted in $G$,
which implies that
\begin{align*}
 \nu_{ur}(G) \geq \frac{n(G)-2}{3}+1 >\frac{n(G)}{3},
\end{align*}
a contradiction.
\end{proof}

Claim \ref{c1} and \ref{c4} imply that $\delta(G) = 2$.
Let $u$ be a vertex of degree $2$, and
let $v$ and $w$ be the neighbors of $u$.
By the girth condition, $v$ and $w$ are not adjacent,
and the only common neighbor of $v$ and $w$ is $u$.
Let $G^\prime = G-\{u,v,w\}$.
\begin{claim} \label{c5}
 $G^\prime \in \mathcal{G}$ and $d_G(v) + d_G(w) \geq 5$.
\end{claim}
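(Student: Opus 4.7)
The plan is a proof by contradiction: assuming either $G^\prime \notin \mathcal{G}$ or $d_G(v) + d_G(w) \leq 4$, I would construct a uniquely restricted matching in $G$ of size at least $n(G)/3$, which contradicts $G$ being a counterexample (so that $\nu_{ur}(G) < n(G)/3$, since $G$ is connected and avoids $\mathcal{G} \cup \mathcal{G}_3 \cup \{H_1,H_2\}$). Two preliminaries drive the argument. First, because $G$ is connected and $u$ has no neighbor in $G^\prime$, every component of $G^\prime$ contains a vertex losing an edge to $\{v,w\}$ and hence of $G^\prime$-degree at most $2$; no component is cubic, giving $\kappa_{G^\prime}(\mathcal{G}_3) = \kappa_{G^\prime}(\{H_1,H_2\}) = 0$. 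Second, Claim \ref{c2} forces every $\mathcal{G}$-component of $G^\prime$ to absorb at least three of the $d_G(v) + d_G(w) - 2 \leq 4$ attaching edges from $\{v,w\}$, so $\kappa_{G^\prime}(\mathcal{G}) \leq 1$. Minimality of $G$ then gives $\nu_{ur}(G^\prime) \geq (n(G^\prime) - \kappa_{G^\prime}(\mathcal{G}))/3$.

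For the degree bound, suppose $d_G(v) = d_G(w) = 2$. Only two attaching edges exist, so Claim \ref{c2} forces $\kappa_{G^\prime}(\mathcal{G}) = 0$ and $\nu_{ur}(G^\prime) \geq n(G^\prime)/3$. Appending $\{uv\}$ to any uniquely restricted matching of $G^\prime$ gives such a matching of $G$ of size at least $n(G)/3$ --- any alternating cycle through the new edge would have to pass through the unmatched vertex $w$, which is impossible --- a contradiction.

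For $G^\prime \in \mathcal{G}$, assume $G^\prime \notin \mathcal{G}$ with $d_G(v) + d_G(w) \geq 5$. If $\kappa_{G^\prime}(\mathcal{G}) = 0$, the preceding argument again finishes. Otherwise $\kappa_{G^\prime}(\mathcal{G}) = 1$; let $H$ denote the unique $\mathcal{G}$-component. Since $G^\prime \neq H$ while $H$ absorbs at least three attaching edges, this forces $d_G(v) = d_G(w) = 3$ with $H$ containing three of the four attaching vertices and a second component $H^\prime$ containing the remaining one, call it $b$. I take $M := M_H \cup M_{H^\prime - b} \cup \{uv, wb\}$ (or the symmetric version when $b \in N_G(v)$), with $M_H$ any uniquely restricted matching of $H$ of size $(n(H) - 1)/3$ and $M_{H^\prime - b}$ a uniquely restricted matching of $H^\prime - b$ obtained by minimality; Claim \ref{c1} applied inside $H^\prime$ bounds $\kappa_{H^\prime - b}(\mathcal{G}) \leq 1$. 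Because $H$ and $H^\prime$ are distinct components of $G^\prime$, any alternating cycle through a bridging edge would require a $G^\prime$-edge between them, which does not exist, so $M$ is uniquely restricted; a size count then gives $|M| \geq n(G)/3$, a contradiction.

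The main obstacle is the unique-restrictedness verification of the augmented matching: the component separation of $H$ and $H^\prime$ in $G^\prime$ is the key device that blocks any alternating cycle through the bridging edges from closing. The bookkeeping of $\kappa_{H^\prime - b}(\mathcal{G})$ requires slight care --- removing $b$ can create one new $\mathcal{G}$-component in $H^\prime - b$, so in the borderline sub-case $\kappa_{H^\prime - b}(\mathcal{G}) = 1$ the size comes out exactly equal to $n(G)/3$, which still contradicts the strict counterexample inequality $\nu_{ur}(G) < n(G)/3$.
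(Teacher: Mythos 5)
Your proposal is correct and follows the paper's skeleton for most of the way: you bound the number of $\mathcal{G}$-components of $G^\prime$ by one via Claim~\ref{c2} and the at most $d_G(v)+d_G(w)-2\le 4$ edges joining $\{v,w\}$ to $G^\prime$, you extend a uniquely restricted matching of $G^\prime$ by $uv$ when no component lies in $\mathcal{G}$ (the uncovered vertex $w$ blocks any alternating cycle), and this forces $d_G(v)+d_G(w)\ge 5$ together with exactly one $\mathcal{G}$-component. You diverge only in the final sub-case where $G^\prime$ has a second component $H^\prime$ attached by a single edge $wb$: the paper cuts $G$ along this bridge, notes via Claim~\ref{c1} that neither $H^\prime$ nor $G-V(H^\prime)$ is in $\mathcal{G}$, applies minimality to both sides and unions the two matchings across the bridge, whereas you keep the pieces $H$ and $H^\prime-b$, take a maximum uniquely restricted matching of $H$ (of size $\frac{n(H)-1}{3}$ by Lemma~\ref{l5}), apply minimality to $H^\prime-b$, and add the two edges $uv,wb$; your count $\frac{(n(H)-1)+(n(H^\prime)-2)}{3}+2=\frac{n(G)}{3}$ indeed survives $\kappa_{H^\prime-b}(\mathcal{G})=1$, and the unique-restrictedness check is sound, since an alternating cycle through $uv$ is forced through $uw$ and then through the matched edge $wb$, and any cycle through $wb$ would need a second edge leaving $V(H^\prime)$, which does not exist. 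Two loose ends should be tied up. First, a single-vertex component of $G^\prime$ lies in $\mathcal{T}\subseteq\mathcal{G}$ and escapes Claim~\ref{c2} (Claim~\ref{c1} only demands two outgoing edges), so your statement that every $\mathcal{G}$-component absorbs three attaching edges needs the observation, recorded in the paper just before the claim, that by the girth condition $u$ is the only common neighbor of $v$ and $w$; hence $G^\prime$ has no isolated vertices. Second, in the size bound for $H^\prime-b$ you should also note $\kappa_{H^\prime-b}(\mathcal{G}_3)=\kappa_{H^\prime-b}(\{H_1,H_2\})=0$, which holds because every component of $H^\prime-b$ contains a neighbor of $b$ whose degree has dropped below $3$.
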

\begin{proof}
 Since $\delta(G) = 2$, $G^\prime$ contains 
 no isolated vertices, which, 
 by Claim \ref{c2},
 implies that at most one component of $G^\prime$
 belongs to $\mathcal{G}$.
 If no component of $G^\prime$
 belongs to $\mathcal{G}$,
 then $G^\prime$ contains a uniquely restricted matching $M^\prime$
 of size at least $\frac{n(G^\prime)}{3}$.
 Since the matching $M^\prime \cup \{uv\}$
 is uniquely restricted in $G$,
 we obtain that
 \begin{align*}
  \nu_{ur}(G) \geq \frac{n(G)-3}{3} + 1 = \frac{n(G)}{3},
 \end{align*}
 which is a contradiction.
 Hence, exactly one component of $G^\prime$ belongs to $\mathcal{G}$,
 which, by Claim \ref{c2}, implies that $d_G(v) + d_G(w) \geq 5$.
 Suppose that $G^\prime$ contains a second component $H$.
 Since $m_G(V(G^\prime)) \leq 4$, Claim \ref{c2} implies that $m_G(V(H)) = 1$.
 This implies, by Claim \ref{c1}, that
 neither $H$ nor $G-V(H)$ belong to $\mathcal{G}$.
 Therefore, $H$ and $G-V(H)$ contain 
 uniquely restricted matchings
 $M_H$ and $M^\prime$ of sizes at least
 $\frac{n(H)}{3}$ and
 $\frac{n(G)-n(H)}{3}$, respectively.
 Since the matching
 $M_H \cup M^\prime$ 
 is uniquely restricted in $G$,
 we obtain that
 \begin{align*}
 \nu_{ur}(G) \geq \frac{n(H) + n(G)-n(H)}{3} = \frac{n(G)}{3},
\end{align*}
which is a contradiction.
\end{proof}
Let $T$ be the host tree of $G^\prime$,
and let $H_1,\ldots,H_k$ be the blocks of $G^\prime$ isomorphic to a graph in $\mathcal{B}$,
which, by Claim \ref{c3}, implies that the blocks $H_1,\ldots,H_k$ belong to $\mathcal{B}_c$. 
Let $I \subseteq [k]$, let $G^{\prime\prime}$ arise from
$G^\prime$ by contracting the edges of $H_i$ for every $i$ in $I$,
and let $v_i$ be the vertex in $G^{\prime\prime}$ corresponding to $H_i$ for every $i$ in $I$.
Note that, if $I = [k]$, then $G^{\prime\prime}$ is isomorphic to $T$.
For the host tree $T$, let $D_T^\prime = D_T \setminus \{v_1,\ldots,v_k\}$.
\begin{claim} \label{c6}
 $d_G(v) + d_G(w) = 6$.
\end{claim}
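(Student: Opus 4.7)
The plan is to proceed by contradiction: since $\delta(G)=2$ was shown prior to the claim and Claim~\ref{c5} gives $d_G(v)+d_G(w)\ge 5$, failure of Claim~\ref{c6} forces $d_G(v)=2$ and $d_G(w)=3$ after renaming. Let $v^\prime$ be the neighbor of $v$ other than $u$, and let $w_1,w_2$ be the two neighbors of $w$ other than $u$. The girth-$5$ hypothesis makes $v^\prime,w_1,w_2$ pairwise distinct and rules out any common neighbor of $w_1,w_2$ outside $\{u,w\}$, so no vertex of $V(G^\prime)\setminus X$ is adjacent to all of $X:=\{v^\prime,w_1,w_2\}$.

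I would then prepare to invoke Lemma~\ref{l7} on $G^\prime$. Claim~\ref{c5} gives $G^\prime\in\mathcal{G}$, and Claim~\ref{c3} applied to the induced subgraph $G^\prime$ forces all of its blocks into $\mathcal{B}_c$, so $G^\prime\in\mathcal{G}_c$. Each element of $X$ has $G^\prime$-degree at most $2$: a degree-$2$ vertex belongs to the target set of Lemma~\ref{l7} directly, and a degree-$1$ vertex $x$ cannot sit in a block of $G^\prime$ (blocks have minimum degree $2$) nor in $A_T$ of the host tree $T$ of $G^\prime$ (those vertices have $G^\prime$-degree exactly $3$ by Lemma~\ref{l1}), so necessarily $x\in D_T^\prime$. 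Therefore $X\subseteq D_T^\prime\cup\{x\in V(G^\prime):d_{G^\prime}(x)=2\}$.

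The main case is $G^\prime\not\cong G_2,G_4$. Here Lemma~\ref{l7} yields a uniquely restricted matching $M^*$ in $G^\prime-X$ of size $(n(G)-4)/3$ avoiding $X$. I would set $M:=M^*\cup\{uw,vv^\prime\}$ and verify that $M$ is uniquely restricted in $G$ via the alternating-cycle criterion of Golumbic, Hirst, and Lewenstein: since $w_1,w_2$ are uncovered by $M$, no $M$-alternating cycle can visit them; if $uw$ belonged to such a cycle $C$, tracing $C$ through $v^\prime-v-u-w$ would eventually force $C$ to continue from $w$ along a non-$M$ edge into $w_1$ or $w_2$, a contradiction, and the analogous argument excludes $vv^\prime$ from $C$. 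Hence $u,v,w,v^\prime\notin V(C)$, so $C\subseteq G^\prime-X$ alternates with respect to $M^*$, contradicting its unique restrictedness. Then $|M|=(n(G)+2)/3>n(G)/3$ contradicts $G$ being a counterexample.

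The real obstacle is the exceptional case $G^\prime\cong G_2$ or $G^\prime\cong G_4$, in which Lemma~\ref{l7} is unavailable precisely because condition $(ii)$ in Table~\ref{table1} fails for these graphs. In this case $X$ must coincide with the three degree-$2$ vertices of $G^\prime$, and the symmetry of $G_2$ (respectively $G_4$) acting transitively on those three vertices makes the identification of $v^\prime$ with the ``lone'' attachment and of $\{w_1,w_2\}$ with the ``paired'' attachments unique up to isomorphism. I would then check directly against Figures~\ref{fig2} and~\ref{fig3} that attaching the path $v-u-w$ to $G^\prime$ via the edges $vv^\prime,ww_1,ww_2$ reproduces exactly $G_5$ (respectively $G_8$), agreeing on vertex count, degree sequence, and adjacencies. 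Since both $G_5$ and $G_8$ belong to $\mathcal{G}$ through the trivial host tree consisting of a single $D_T$-vertex replaced by the block, it would follow that $G\in\mathcal{G}$, contradicting the standing hypothesis that the counterexample $G$ lies outside~$\mathcal{G}$.
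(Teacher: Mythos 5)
Your proposal is correct and follows essentially the same route as the paper: assume degrees $2$ and $3$, use the girth condition to verify that no vertex outside $X$ has three neighbors in $X$, apply Lemma~\ref{l7} to get a uniquely restricted matching of size $\frac{n(G)-4}{3}$ avoiding $X$, add two edges to exceed $\frac{n(G)}{3}$, and dispose of the cases $G^\prime\cong G_2,G_4$ by observing that $G$ would then be $G_5$ or $G_8$, which is excluded since these lie in $\mathcal{G}$ (equivalently, by Claim~\ref{c3}). You merely spell out details the paper leaves implicit (the membership $X\subseteq D_T^\prime\cup\{x: d_{G^\prime}(x)=2\}$, the alternating-cycle check, and the symmetry argument identifying $G_5$/$G_8$), which is fine.
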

\begin{proof}
Suppose, for a contradiction, 
that $v$ has degree $3$ and $w$ has degree $2$ in $G$.
Let $N_G(v) = \{u, x_1,x_2\}$, let $N_G(w) = \{u,x_3\}$,
and let $X = \{x_1,x_2,x_3\}$.
Since $G$ is not isomorphic to $G_5$ or $G_8$, it follows that
$G^\prime$ is not isomorphic to $G_2$ or $G_4$.
By the girth condition, 
no vertex in $V(G^\prime) \setminus X$ has $3$
neighbors in $X$.
Hence, Lemma \ref{l7} implies that
$G^\prime$ contains a uniquely restricted matching $M^\prime$
of size at least $\frac{n(G)-4}{3}$
that does not cover the vertices in $X$.
Since the matching $M^\prime \cup \{uv, wx_3\}$ is
uniquely restricted in $G$,
we obtain that
\begin{align*}
 \nu_{ur}(G) \geq \frac{n(G)-4}{3}+2 >\frac{n(G)}{3},
\end{align*}
which is a contradiction.
\end{proof}
Claim \ref{c6} implies that both $v$ and $w$ have degree $3$ in $G$.
Let $N_G(v) = \{u,x_1,x_2\}$, let $N_G(w) = \{u, x_3, x_4\}$, 
and let $X = \{x_1,x_2,x_3,x_4\}$.
\begin{claim} \label{c7}
$\nu_{ur}(G^\prime-X) < \nu_{ur}(G^\prime)$.
\end{claim}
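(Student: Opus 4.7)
The plan is to prove the strict inequality by contradiction. Suppose that $\nu_{ur}(G'-X) \geq \nu_{ur}(G')$. By Claim~5 the graph $G'$ belongs to $\mathcal{G}$, so Lemma~5 yields $\nu_{ur}(G') = (n(G)-4)/3$, and there exists a uniquely restricted matching $M'$ in $G'-X$ of this size. In particular, $V(M') \cap (\{u,v,w\} \cup X) = \emptyset$.

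I will enlarge $M'$ by two edges to obtain a uniquely restricted matching of $G$ whose size is incompatible with $G$ being a counterexample. By the girth hypothesis $u$ is the only common neighbor of $v$ and $w$, so the four vertices $x_1, x_2, x_3, x_4$ are pairwise distinct and distinct from $u$, $v$, and $w$. Consequently, $M := M' \cup \{vx_1, wx_3\}$ is a matching in $G$ of size $|M'|+2 = (n(G)+2)/3$.

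It remains to verify that $M$ is uniquely restricted. Suppose, for contradiction, that $C$ is an $M$-alternating cycle in $G$. Every vertex on $C$ is covered by $M$, hence $V(C) \subseteq V(M) = V(M') \cup \{v,w,x_1,x_3\}$; in particular $u \notin V(C)$. If $v \in V(C)$, the unique $M$-edge at $v$, namely $vx_1$, lies on $C$, and the other edge of $C$ at $v$ must be $uv$ or $vx_2$; the first is impossible because $u \notin V(C)$, and the second because $x_2 \notin V(M)$, using $X \cap V(M') = \emptyset$ together with $x_2 \neq x_1, x_3$ (from the girth condition). Symmetric arguments rule out $w$, $x_1$, and $x_3$ from $V(C)$, so $V(C) \subseteq V(M')$; but then $C$ is an $M'$-alternating cycle in $G'-X$, contradicting the unique restrictedness of $M'$ there. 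Therefore $\nu_{ur}(G) \geq |M| = (n(G)+2)/3 > n(G)/3$, which contradicts $G$ being a counterexample to Theorem~3 (the right-hand side reduces to $n(G)/3$ by Claim~4 and the hypotheses on $G$). The argument is essentially immediate once the extension $\{vx_1, wx_3\}$ is chosen; the only delicate step is excluding $v$, $w$, $x_1$, and $x_3$ from any potential alternating cycle, and no substantive obstacle is anticipated.
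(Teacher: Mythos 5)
Your proof is correct and follows essentially the same route as the paper: assume a uniquely restricted matching of $G'-X$ of size $\nu_{ur}(G')=\frac{n(G)-4}{3}$ exists, extend it by two edges covering $v$ and $w$, and contradict the minimality of the counterexample $G$ via $\nu_{ur}(G)\geq\frac{n(G)+2}{3}>\frac{n(G)}{3}$. The only (immaterial) difference is that you add $\{vx_1,wx_3\}$ where the paper adds $\{uv,wx_3\}$, and you spell out the verification that no $M$-alternating cycle exists, which the paper leaves implicit.
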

\begin{proof}
If $G^\prime$ contains a uniquely restricted matching $M^\prime$
of size at least
$\nu_{ur}(G^\prime)=\frac{n(G^\prime)-1}{3}$
that does not cover the vertices of $X$,
then the matching $M^\prime \cup \{uv, wx_3\}$ is
uniquely restricted in $G$, which implies that
\begin{align*}
 \nu_{ur}(G) \geq \frac{n(G)-4}{3} + 2 > \frac{n(G)}{3},
\end{align*}
a contradiction.
\end{proof}
\begin{claim} \label{c8}
 There is some $i \in [k]$ such that $X \cap V(H_i) \neq \emptyset$.
\end{claim}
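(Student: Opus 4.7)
Suppose, for contradiction, that $X \cap V(H_i) = \emptyset$ for every $i \in [k]$. Each $x_j$ has degree $2$ in $G'$: in $G$ it has degree $3$ (by girth, the only common neighbor of $v$ and $w$ is $u$), and it loses exactly one edge to $\{v,w\}$ when passing to $G'$. Since every vertex of $A_T$ has degree $3$ in $G'$ (the block-replacement construction preserves the degree of every $A_T$-vertex), the assumption forces $X \subseteq D_T'$, and in fact $X$ consists of four degree-$2$ vertices of $T$ in $D_T$. I would also note that $G'$ cannot be isomorphic to $G_2$ or $G_4$: each of those has only three degree-$2$ vertices, all lying within its unique block, so accommodating a size-$4$ subset of degree-$2$ vertices that avoids every block is impossible.

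The plan is then to apply Lemma \ref{l2}(iii) to the host tree $T$ with the set $X$. If both hypotheses of that lemma are satisfied, $T-X$ contains a matching of size $\nu(T)$, and extending it block by block via the corresponding-matching construction together with Table \ref{table1}(i)--(ii), in the spirit of the proof of Lemma \ref{l7}, produces a uniquely restricted matching in $G'-X$ of size $\nu_{ur}(G') = (n(G')-1)/3$, directly contradicting Claim \ref{c7}. The first hypothesis of Lemma \ref{l2}(iii)---that no vertex of $V(T)\setminus X$ has three neighbors in $X$---follows from the girth condition: any three of the $x_i$'s must include two attached to the same element of $\{v,w\}$, and a common further neighbor of those two would, together with $v$ or $w$, produce a $4$-cycle in $G$.

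The main obstacle is verifying the second hypothesis of Lemma \ref{l2}(iii): that there is no vertex $c \in V(T)\setminus X$ with neighbors $a, b \in V(T)\setminus X$ such that $X \subseteq N_T(\{a,b\})$. Since $T$ is bipartite with parts $A_T$ and $D_T$ (Lemma \ref{l1}) and $X \subseteq D_T$, any such $a,b$ must lie in $A_T$ and $c \in D_T \setminus X$; a short girth argument then forces, up to the obvious symmetry, $N_T(a) = \{c,x_1,x_3\}$ and $N_T(b) = \{c,x_2,x_4\}$. In this exceptional configuration, the ten vertices $\{u,v,w,x_1,x_2,x_3,x_4,a,b,c\}$ induce a copy of $G_1$ in $G$. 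I would then exploit this explicit induced $G_1$---combined with Claim \ref{c2} applied to its external edges (those from the $x_i$'s and possibly $c$ to the rest of $G$), Table \ref{table1}, and, if necessary, the minimality of $G$ applied after excising this copy of $G_1$---to construct a uniquely restricted matching in $G$ of size exceeding $n(G)/3$, contradicting the choice of $G$. This last step, ruling out the exceptional $G_1$-configuration, is where the most careful case analysis is needed.
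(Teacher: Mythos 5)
Your reduction is the same as the paper's up to the last step: $X\subseteq D_T^\prime$, the girth condition kills the ``three neighbours in $X$'' obstruction, and if the second hypothesis of Lemma~\ref{l2}~$(iii)$ also held, lifting a maximum matching of $T-X$ block by block via Table~\ref{table1} gives $\nu_{ur}(G^\prime-X)=\nu_{ur}(G^\prime)$, contradicting Claim~\ref{c7}; in the remaining case the ten vertices $\{u,v,w,x_1,x_2,x_3,x_4,a,b,c\}$ induce a copy of $G_1$. This is exactly how the paper argues (with $a,b,c$ called $y_1,y_2,y_3$).

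The final step, however, is a genuine gap, and the route you sketch cannot succeed. In the exceptional configuration $G$ is not a graph whose large uniquely restricted matching you merely have not yet found: it actually belongs to $\mathcal{G}$. Since $a,b\in A_T$ share the neighbour $c\in D_T$, Lemma~\ref{l4} shows that the tree $T^\prime$ obtained from $T$ by contracting the edges in $E_T(\{a,b\})$ lies in $\mathcal{T}$ and the new vertex $z$ lies in $D_{T^\prime}$; replacing $z$ by the induced $G_1$ and $v_1,\ldots,v_k$ by $H_1,\ldots,H_k$ recovers $G$, so $G\in\mathcal{G}$. By Lemma~\ref{l5} this forces $\nu_{ur}(G)=\frac{n(G)-1}{3}<\frac{n(G)}{3}$, so no uniquely restricted matching of size at least $\frac{n(G)}{3}$ exists, and none of your proposed tools can manufacture one: Claim~\ref{c2} applied to the $G_1$ copy is simply satisfied (it has four or five external edges), and excising the copy and using minimality yields at best $\frac{n(G)-10}{3}+3=\frac{n(G)-1}{3}$. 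The only available contradiction is structural: $G\in\mathcal{G}$ contradicts the standing assumption that the minimum counterexample is not in $\mathcal{G}$. So you must invoke Lemma~\ref{l4} (which your outline never uses) to certify that the contracted host tree is in $\mathcal{T}$ with the contracted vertex in its $D$-set, rather than attempt a matching count.
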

\begin{proof}
Suppose that $X \subseteq D_T^\prime$.
Lemma \ref{l2} $(iii)$ and Claim \ref{c7} imply that there is
a vertex $y_3 \in D_T^\prime$ with neighbors $y_1$ and $y_2$ in $A_T$
such that $X \subseteq N_{G^{\prime}}(\{y_1,y_2\})$.
By the girth condition and symmetry,
we may assume that $y_1$ is adjacent to $x_1$ and $x_3$,
and that $y_2$ is adjacent to $x_2$ and $x_4$, see Figure \ref{fig_c8}.
Since $D_T^\prime$ is independent, 
this implies that the graph induced by 
$\{u,v,w,y_1,y_2,y_3\} \cup X$ is isomorphic to $G_1$.
Let $T^\prime$ arise from $T$
by contracting all edges incident with $y_1$ or $y_2$,
and let $z$ be the newly created vertex.
$G$ arises from from $T^\prime$
by replacing the vertices 
$v_1,\ldots,v_k$
by $H_1,\ldots,H_k$ and $z$ by $G_1$, see Figure \ref{fig_c8}.
By Lemma \ref{l4}, $G \in \mathcal{G}$,
which is a contradiction.
\end{proof}
\begin{figure}[H] 
 \centering\tiny
 \begin{tikzpicture}[scale = 1.2]
\draw (4.5,0) ellipse(4 and 0.35);
\node[label=left:\footnotesize$y_1$] (y1) at (1,0) {};
\node[label=right:\footnotesize$y_2$] (y2) at (1.5,0) {};

\draw (2,1.2) ellipse (2 and 0.35);

\foreach \i in {1,2,3,4} {
\node[label=left:\footnotesize$x_{\i}$] (x\i) at (\i/2,1.2) {};
	    }
\node[label=right:\footnotesize$y_3$] (y3) at (2.5,1.2) {};

\node[label=above:\footnotesize$u$] (u) at (1.25,2.5) {};
\node[label=left:\footnotesize$v$] (v) at (0.75,2) {};
\node[label=right:\footnotesize$w$] (w) at (1.75,2) {};
	    
\draw (5.35,1.2) circle (0.35);
\draw (6.55,1.2) circle (0.35);
\draw (8.65,1.2) circle (0.35);

\draw[-,dotted] (7.2,1.2) -- (8,1.2);

\pgftext[x=3.25cm,y=1.2cm] {$D_T^\prime$};
\pgftext[x=4.5cm,y=0] {$A_T$};
\pgftext[x=5.35cm,y=1.2cm] {$H_1$};
\pgftext[x=6.55cm,y=1.2cm] {$H_2$};
\pgftext[x=8.65cm,y=1.2cm] {$H_k$};

\draw[-] (u) -- (v);
\draw[-] (u) -- (w);
\draw[-] (v) -- (x1);
\draw[-] (v) -- (x2);
\draw[-] (w) -- (x3);
\draw[-] (w) -- (x4);

\draw[-] (y1) -- (x1);
\draw[-] (y1) -- (x3);

\draw[-] (y2) -- (x2);
\draw[-] (y2) -- (x4);

\draw[-] (y1) -- (y3);
\draw[-] (y2) -- (y3);

\foreach \i [evaluate=\i as \x using 1.5+\i/2] in {1,2,3,4}
\draw[-,dotted] (x\i) -- (\x,0);

\draw[-,dotted] (y3) -- (4,0);

\end{tikzpicture}
 \caption{An illustration of Claim \ref{c8}.} \label{fig_c8}
 \end{figure}
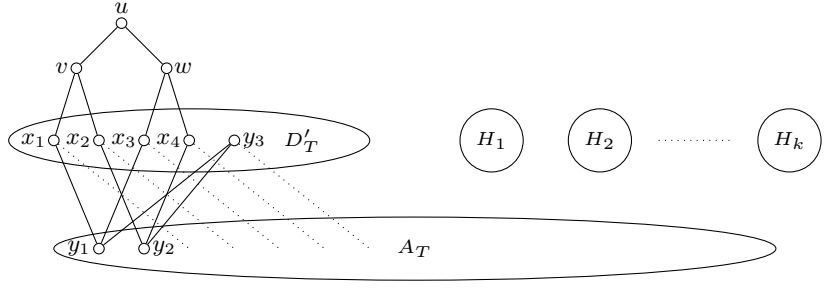 
\begin{claim} \label{c9}
 For all $i \in [k]$, we have $|X \cap V(H_i)| \neq 1$.
\end{claim}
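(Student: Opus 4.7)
The plan is to argue by contradiction: suppose $|X \cap V(H_i)| = 1$ for some $i \in [k]$, and by relabelling within $X$ (and possibly swapping the roles of $v$ and $w$) assume $x_1 \in V(H_i)$ and $x_2, x_3, x_4 \notin V(H_i)$. Since $x_1$ has degree $2$ in $G^\prime$ and no edges of $G^\prime$ lead outside $V(H_i)$ (its third neighbour in $G$ is $v$), it must be one of the degree-$2$ vertices of $H_i$ in the sense of Table~\ref{table1}. The target, paralleling the proof of Claim~\ref{c7}, is to produce a uniquely restricted matching of $G^\prime$ of size $\frac{n(G^\prime)-1}{3}$ avoiding $X$; this will contradict Claim~\ref{c7}.

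The key step is to contract $H_i$ in $G^\prime$ to obtain $G^{\prime\prime} \in \mathcal{G}_c$ with contracted vertex $v_i \in D_T^\prime$; by Claim~\ref{c3} the remaining blocks all lie in $\mathcal{B}_c$. The vertices $x_2, x_3, x_4$ keep degree $2$, so $X^{\prime\prime} = \{x_2, x_3, x_4\}$ lies in $D_T^\prime \cup \{u : d_{G^{\prime\prime}}(u) = 2\}$, and the girth-$5$ hypothesis rules out any vertex having three neighbours in $X^{\prime\prime}$, since a common neighbour of $x_3$ and $x_4$ would already close a $4$-cycle through $w$. Lemma~\ref{l7}, applied to $G^{\prime\prime}$ with $X^{\prime\prime}$, then delivers a uniquely restricted matching $M^{\prime\prime}$ in $G^{\prime\prime}$ of size $\frac{n(G^{\prime\prime})-1}{3}$ avoiding $X^{\prime\prime}$. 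I would now lift $M^{\prime\prime}$ back to $G^\prime$ and pad inside $H_i$: if $M^{\prime\prime}$ does not cover $v_i$, then the lifted matching is disjoint from $V(H_i)$, and I adjoin a uniquely restricted matching $M_i$ of $H_i$ of size $\frac{n(H_i)-1}{3}$ avoiding $x_1$, which exists by property $(i)$ in Table~\ref{table1}; if $M^{\prime\prime}$ covers $v_i$ through an edge $v_i y$, I replace it by $y y^\prime$ where $y^\prime$ is the degree-$2$ vertex of $H_i$ neighbouring $y$ in $G^\prime$, and invoke property $(i)$ a second time to pick $M_i$ of size $\frac{n(H_i)-1}{3}$ avoiding both $x_1$ and $y^\prime$. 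In both cases, since every edge of $E(G^\prime) \setminus \bigcup_j E(H_j)$ is a bridge of $G^\prime$, the combined matching is uniquely restricted, has size $\frac{n(G^{\prime\prime})-1}{3} + \frac{n(H_i)-1}{3} = \frac{n(G^\prime)-1}{3}$, and avoids $X$, contradicting Claim~\ref{c7}.

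The main obstacle is the case excluded from Lemma~\ref{l7}, namely $G^{\prime\prime} \cong G_2$ or $G^{\prime\prime} \cong G_4$. Here $G^{\prime\prime}$ has exactly three degree-$2$ vertices, and I would treat it by a separate finite case analysis: either those three vertices coincide with $\{x_2, x_3, x_4\}$, in which case the attachment point of $H_i$ at $v_i$ together with the girth condition forces $G$ to be isomorphic to one of the exceptional graphs $H_1$ or $H_2$, or to an element of $\mathcal{G}$ obtainable from the host tree via Lemma~\ref{l4}, each contradicting the choice of $G$; or the three degree-$2$ vertices of $G^{\prime\prime}$ do not coincide with $X^{\prime\prime}$, in which case some $x_j$ has degree $3$ in $G^{\prime\prime}$, a structural impossibility given the construction, or Lemma~\ref{l6} applied to $G^{\prime\prime}$ with a well-chosen $2$-element subset of $\{v_i\} \cup X^{\prime\prime}$ leaves enough freedom for the same lifting procedure. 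I expect this small structural case analysis, rather than the main lifting argument, to be the most delicate part of the proof.
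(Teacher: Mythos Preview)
Your main argument---contract $H_i$, apply Lemma~\ref{l7} to $G''$ with $X''=\{x_2,x_3,x_4\}$, then lift back and pad inside $H_i$ using property~$(i)$ of Table~\ref{table1}---is exactly what the paper does, and your verification that no vertex of $G''$ can see all three of $x_2,x_3,x_4$ (via the $4$-cycle through $w$) is in fact more explicit than the paper's.

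Where you diverge is in treating $G''\cong G_2$ or $G''\cong G_4$ as a genuine obstacle requiring a separate case analysis. It is not: this case never occurs. The host tree of $G''$ is the same tree $T$ as for $G'$ (contracting $H_i$ simply undoes the replacement at $v_i$), and since $x_2,x_3,x_4\notin V(H_i)$ the tree $T$ has at least one vertex besides $v_i$, so $n(T)\geq 2$. But any graph in $\mathcal{G}$ with host tree of order at least $2$ (hence at least $4$, by Lemma~\ref{l1}) contains a cut vertex in $A_T$, whereas $G_2$ and $G_4$ are $2$-connected. The paper disposes of the issue in one line with exactly this observation. Your proposed finite case analysis is therefore unnecessary, and as sketched it is also shaky: if $G''\cong G_2$ actually held, then $G$ would have at least $13+n(H_i)-1+3$ vertices and could not be $H_1$ or $H_2$, so that branch of your analysis would not close as stated. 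Replace the whole ``main obstacle'' paragraph by the one-line remark that $n(T)\geq 2$ forces $G''\notin\{G_2,G_4\}$, and the proof is complete.
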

\begin{proof}
Suppose, for a contradiction, 
that there is some $i \in \lbrack k \rbrack$
such that $|V(H_i) \cap X| = 1$.
By symmetry, we may assume that $x_1 \in V(H_i)$. 
Let $G^{\prime\prime}$ arise from $G^\prime$ by 
contracting the edges of $H_i$.
Since $n(T) \geq 2$ and $T$ is the host tree of $G^{\prime\prime}$,
it follows that
$G^{\prime\prime}$ is not isomorphic to $G_2$ or $G_4$.
Hence, Lemma \ref{l7}
implies that $G^{\prime\prime}$
contains a uniquely restricted matching $M$
of size $\frac{n(G^{\prime\prime})-1}{3}$
that does not cover the vertices in $X \setminus \{x_1\}$.
Let $M^\prime$ be the corresponding matching
in $G^\prime - X$.
Since $|V(M^\prime) \cap V(H_i)| \leq 1$, 
$H_i$ contains a uniquely restricted matching $M_i$
of size
$\frac{n(H_i)-1}{3}$
that does not cover $x_1$ and is disjoint from $M^\prime$, see Table \ref{table1}.
Since the matching
$M^\prime \cup M_i$
is uniquely restricted in $G^\prime - X$,
we obtain that 
\begin{align*}
 \nu_{ur}(G^\prime -X) \geq \frac{n(G^\prime) - n(H_i) + n(H_i)-1}{3} = \nu_{ur}(G^\prime),
\end{align*}
which is a contradiction to Claim \ref{c7}.
\end{proof}
\begin{claim} \label{c10}
 There is exactly one $i \in [k]$ with $X \cap V(H_i) \neq \emptyset$.
\end{claim}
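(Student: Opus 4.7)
The plan is to argue by contradiction: assume there are two distinct indices $i,j \in [k]$ with $X \cap V(H_i) \neq \emptyset$ and $X \cap V(H_j) \neq \emptyset$. Claim \ref{c9} forces $|X \cap V(H_i)|, |X \cap V(H_j)| \geq 2$, which together with $|X|=4$ pins down $|X \cap V(H_i)| = |X \cap V(H_j)| = 2$ and $X \subseteq V(H_i) \cup V(H_j)$.

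The goal would then be to build a uniquely restricted matching in $G^\prime - X$ of size $\nu_{ur}(G^\prime)$, contradicting Claim \ref{c7}. I would let $G^{\prime\prime}$ arise from $G^\prime$ by contracting both $H_i$ and $H_j$. Then $G^{\prime\prime}$ still belongs to $\mathcal{G}$ with the same host tree $T$, and the contracted vertices $v_i$ and $v_j$ lie in $D_T^{\prime\prime}$. Applying Lemma \ref{l6} to $G^{\prime\prime}$ with the set $\{v_i,v_j\}$ would yield a uniquely restricted matching $M$ of size $\frac{n(G^{\prime\prime})-1}{3}$ covering neither $v_i$ nor $v_j$, and, since $M$ uses no edge incident to $v_i$ or $v_j$, it can be viewed as a uniquely restricted matching in $G^\prime - V(H_i) - V(H_j)$. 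Property $(i)$ of Table \ref{table1}, which every graph in $\mathcal{B}$ satisfies, would then supply uniquely restricted matchings $M_i$ in $H_i$ and $M_j$ in $H_j$ of the full sizes $\frac{n(H_i)-1}{3}$ and $\frac{n(H_j)-1}{3}$ that avoid $X \cap V(H_i)$ and $X \cap V(H_j)$, respectively.

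It then remains to verify that $M \cup M_i \cup M_j$ is uniquely restricted in $G^\prime - X$. The three matchings live on pairwise disjoint vertex sets, so their union is automatically a matching. For unique restrictedness, I would use that every edge of $G^\prime$ outside $\bigcup_{\ell=1}^{k} E(H_\ell)$ is a bridge and thus cannot lie on any cycle, so any alternating cycle would have to be confined to a single block $H_\ell$, where the restriction of $M$, $M_i$, or $M_j$ is uniquely restricted by construction. A routine calculation with $n(G^{\prime\prime})=n(G^\prime)-n(H_i)-n(H_j)+2$ then gives
\[
 |M \cup M_i \cup M_j| = \frac{n(G^\prime)-1}{3} = \nu_{ur}(G^\prime),
\]
contradicting Claim \ref{c7}. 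The step that needs the most care is the structural bookkeeping confirming that $G^{\prime\prime}$ really sits inside $\mathcal{G}$ with host tree $T$ and $\{v_i,v_j\} \subseteq D_T^{\prime\prime}$, so that Lemma \ref{l6} is applicable, together with the bridge argument localizing alternating cycles to individual blocks; once those are in place, the matching sizes telescope exactly.
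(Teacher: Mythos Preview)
Your proposal is correct and follows essentially the same approach as the paper: both use Claim~\ref{c9} to force $|X\cap V(H_i)|=|X\cap V(H_j)|=2$, then build a uniquely restricted matching of size $\nu_{ur}(G')$ in $G'-X$ by combining a matching avoiding $H_i$ and $H_j$ with maximum uniquely restricted matchings inside $H_i$ and $H_j$ via Table~\ref{table1}(i), contradicting Claim~\ref{c7}. The only cosmetic difference is that the paper descends all the way to the host tree $T$ and invokes Lemma~\ref{l2}(i) directly (then fills in every block $H_\ell$), whereas you contract only $H_i$ and $H_j$ and invoke the packaged Lemma~\ref{l6}; the arithmetic and the bridge argument are identical.
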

\begin{proof}
Suppose, for a contradiction, that there are two indices $i\neq j$ such that 
$X \cap V(H_i) \neq \emptyset$ and $X \cap V(H_j) \neq \emptyset$.
Since $|X| = 4$, Claim \ref{c9} implies that
$|X \cap V(H_i)| = 2$ and
$|X \cap V(H_j)| = 2$.
By Lemma \ref{l2} $(i)$,
$T$ contains a matching $M$ of size 
$\frac{n(T)-1}{3}$
that does not cover $v_i$ and $v_j$.
Let $M^\prime$ be the corresponding
uniquely restricted matching in $G^\prime$.
Since $|V(M^\prime) \cap V(H_\ell)| \leq 1$ for $\ell \in [k]$
and $V(M^\prime) \cap V(H_\ell) = \emptyset$ for $\ell \in \{i,j\}$, 
it follows that
$H_1,\ldots,H_k$ contain 
uniquely restricted matchings $M_1,\ldots,M_k$ of 
sizes $\frac{n(H_1)-1}{3},\ldots,\frac{n(H_k)-1}{3}$
such that no vertex in $X$ is covered by
them and all are disjoint from $M^\prime$, see Table \ref{table1}.
Since the matching
$M^\prime \cup \bigcup_{i=1}^k{M_i}$
is uniquely restricted in $G^\prime - X$,
we obtain that
\begin{align*}
 \nu_{ur}(G^\prime -X) \geq \frac{n(T)-1}{3} + \sum_{i=1}^k{\frac{n(H_i)-1}{3}} = \nu_{ur}(G^\prime),
\end{align*}
which is a contradiction to Claim \ref{c7}.
\end{proof}
\begin{claim} \label{c11}
 There is some $i \in [k]$ with $|X \cap V(H_i)| \geq 3$.
\end{claim}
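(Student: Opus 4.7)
The plan is to derive a contradiction by assuming $|X \cap V(H_i)| = 2$, where $i$ is the unique index given by Claim \ref{c10}. Write $\{x_a, x_b\} = X \cap V(H_i)$ and $\{x_c, x_d\} = X \setminus V(H_i)$. Since each $x_j$ has at most two neighbors in $G'$ (as $G$ is subcubic and each $x_j$ has a neighbor in $\{v,w\}$), while vertices of $A_T$ have degree $3$ in $G'$ by Lemma \ref{l1}, and by Claim \ref{c10} the set $X$ meets no block $H_l$ with $l \neq i$, we have $x_c, x_d \in D_T'$. Contract the edges of $H_i$ in $G'$ to form $G'' \in \mathcal{G}_c$ with new vertex $v_i$, and set $Y = \{v_i, x_c, x_d\}$, a $3$-element subset of the $D_T'$-set for $G''$. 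Moreover $G''$ is not isomorphic to $G_2$ or $G_4$: if the host tree $T$ is non-trivial, then $G''$ contains tree-edge bridges and is not $2$-connected, while a trivial $T$ forces $k = 1$ and $X \subseteq V(H_1)$, contradicting $|X \cap V(H_i)| = 2$.

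The core step is to apply Lemma \ref{l7} to $(G'', Y)$. A vertex $z \in V(G'') \setminus Y$ with three neighbors in $Y$ would be adjacent in $G''$ to both $x_c$ and $x_d$; since these lie in $D_T'$ and $D_T$ is independent in the bipartite tree $T$, we would have $z \in A_T$ with $T$-neighborhood exactly $\{v_i, x_c, x_d\}$ by Lemma \ref{l1}. If $\{x_c, x_d\} = \{x_3, x_4\}$ or $\{x_c, x_d\} = \{x_1, x_2\}$, the girth-$\geq 5$ condition forces the unique common neighbor of $x_c$ and $x_d$ in $G$ to be $w$ or $v$, respectively, neither of which lies in $V(G')$; so no such $z$ exists. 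Lemma \ref{l7} then produces a uniquely restricted matching $M$ in $G'' - Y$ of size $(n(G'') - 1)/3$, whose corresponding matching $M''$ in $G' - V(H_i)$ avoids $\{x_c, x_d\}$; combined with a matching $M_i$ in $H_i$ of size $(n(H_i) - 1)/3$ avoiding $\{x_a, x_b\}$ (provided by property $(i)$ in Table \ref{table1}), one obtains a uniquely restricted matching in $G' - X$ of size $(n(G') - 1)/3 = \nu_{ur}(G')$, contradicting Claim \ref{c7}.

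The remaining subcase is $\{x_c, x_d\} = \{x_2, x_4\}$ with $x_1, x_3 \in V(H_i)$ and the vertex $z \in A_T$ with $T$-neighborhood $\{v_i, x_2, x_4\}$ actually existing. Let $y_1 \in V(H_i)$ be the degree-$2$ vertex of $H_i$ realizing the $T$-edge $zv_i$ in $G'$. Using the girth-$\geq 5$ and subcubic conditions, one verifies that the induced subgraph $H'$ of $G$ on $V(H_i) \cup \{u, v, w, z, x_2, x_4\}$ is $H_i$ together with the induced $6$-cycle $v - u - w - x_4 - z - x_2 - v$ attached via the three edges $vx_1$, $wx_3$, $zy_1$ to three pairwise non-adjacent degree-$2$ vertices of $H_i$; this is precisely the construction producing $G_3, G_6, G_8, G_9$ from $G_1, G_2, G_3, G_4$ in Figures \ref{fig1}--\ref{fig3}. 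If $H' \in \mathcal{B} \setminus \mathcal{B}_c$, Claim \ref{c3} is violated; otherwise $H' \cong G_3$ (so $H_i \cong G_1$), and applying Lemma \ref{l3} to $z \in A_T$ in $T$ yields $T' \in \mathcal{T}$ whose merged vertex $w' \in D_{T'}$ can be replaced by $H' \cong G_3$ so that $G$ arises from $T'$ together with the blocks $H_l$ ($l \neq i$), placing $G \in \mathcal{G}$ and contradicting that $G$ is a counterexample. The principal obstacle is the explicit identification of $H'$ with the corresponding $\mathcal{B}$-graph in each of the four choices of $H_i$, which requires careful inspection of degrees and non-edges dictated by the girth condition.
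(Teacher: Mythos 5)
Your reduction to the subcase where the obstructing vertex $z\in A_T$ exists is fine (and your use of Lemma \ref{l7} on the contracted graph $G''$ with $Y=\{v_i,x_c,x_d\}$ is a legitimate variant of the paper's direct use of Lemma \ref{l2}(ii) on $T$ together with property $(i)$ of Table \ref{table1}), but the way you finish that subcase has a genuine gap. You claim that attaching the induced $6$-cycle $v\,u\,w\,x_4\,z\,x_2$ via the edges $vx_1$, $wx_3$, $zy_1$ to three pairwise non-adjacent degree-$2$ vertices of $H_i$ is ``precisely'' the construction producing $G_3,G_6,G_8,G_9$. This is false on two counts. First, for $H_i\cong G_1$ the triple $\{x_1,x_3,y_1\}$ can be any three of the six subdivision vertices; girth $\geq 5$ and subcubicity do \emph{not} force them to have a common neighbor, and only the ``star'' configuration (three subdivision vertices of edges of $K_4$ sharing an endpoint) yields $G_3$ — a ``triangle'' or ``path'' triple yields a $16$-vertex graph that is neither $G_3$ nor in $\mathcal{B}\setminus\mathcal{B}_c$, so your dichotomy gives no contradiction there. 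Second, for $H_i\cong G_3$ the graph ``$G_3$ plus an attached $6$-cycle'' is not a member of $\mathcal{B}$ at all: the $6$-cycle extensions in $\mathcal{B}$ are only $G_3=G_1+C_6$, $G_6=G_2+C_6$ and $G_9=G_4+C_6$, while $G_8$ is $G_4$ plus a path of length $2$ (you misread the family), so again neither branch of your dichotomy applies.

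These cases cannot be dispatched by structure alone; they must be killed by the extremal hypothesis, which is exactly what the paper does. There, one takes a maximum matching of $T$ missing $x_2,x_4$; since $N_T(y_1)=\{v_i,x_2,x_4\}$ for the common neighbor $y_1\in A_T$, this matching covers $v_i$, and its corresponding matching in $G'$ meets $H_i$ in a single degree-$2$ vertex $y_2$. Claim \ref{c7} then forces $H_i$ to have \emph{no} maximum uniquely restricted matching avoiding $\{x_1,x_3,y_2\}$; by Table \ref{table1} (property $(ii)$, plus the fact that no vertex of $G_3$ has three degree-$2$ neighbors) this rules out $H_i\cong G_3$ altogether, and for $H_i\cong G_1$ it forces $x_1,x_3,y_2$ to have a common neighbor, i.e.\ exactly the star configuration. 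Only after this step do the structural identifications with $G_3$, $G_6$, $G_9$ and the Lemma \ref{l3} replacement argument (your last step, which is otherwise correct) go through. To repair your proof you need to insert this matching argument inside $H_i$ in the subcase where $z$ exists.
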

\begin{proof}
Suppose, for a contradiction, that there is some $i \in [k]$
such that $|X \cap V(H_i)| = 2$.
By Claim \ref{c10}, 
we may assume that $|X \cap D_T^\prime| = 2$.
By Lemma \ref{l2} $(ii)$ and Claim \ref{c7},
we may assume that the vertices in 
$X \cap D_T^\prime$ and $v_i$ 
have a common neighbor $y_1$ in $A_{T}$.
By the girth condition and symmetry,
we may assume that $X \cap D_T^\prime = \{x_2,x_4\}$
and $X \cap V(H_i) = \{x_1,x_3\}$.
Let $M$ be some maximum matching in $T$ that does
not cover the vertices in
$X \cap D_T^\prime$, 
and let $M^\prime$ be the corresponding
uniquely restricted matching in $G^\prime$.
Since $v_i$ is covered by $M$,
it follows that 
$V(H_i) \cap V(M^\prime) \neq \emptyset$, say
$y_2 \in V(H_i) \cap V(M^\prime)$.
Note that, by the construction of $\mathcal{G}$,
$y_2 \not\in V(H_i) \cap X$.
By Claim \ref{c7}, 
we may assume that $H_i$ does not contain a
maximum uniquely restricted matching
that does not cover $y_2$ and the vertices
in $X \cap V(H_i)$.
This implies that
$H_i$ is not isomorphic to $G_3$, see Table \ref{table1}.
If $H_i$ is isomorphic to $G_1$, then 
the vertices in 
$X \cap V(H_i)$ and $y_2$ 
have a common neighbor $y_3$, see Table \ref{table1}.
Hence, the subgraph induced by $V(H_i) \cup \{u,v,w,x_2,x_4,y_1\}$
is isomorphic to $G_3$, see Figure \ref{fig_c11}.
Let $T^\prime$ arise from $T$
by contracting all edges incident with $y_1$,
and let $z$ be the newly created vertex.
$G$ arises from $T$ by replacing
the vertices $v_1,\ldots,v_{i-1}, v_{i+1},\ldots,v_k$ 
by $H_1,\ldots,H_{i-1}, H_{i+1},\ldots,H_k$ 
and $z$ by $G_3$,
which, by Lemma \ref{l3}, 
implies that $G \in \mathcal{G}$, a contradiction.
Hence, $H_i$ is isomorphic to $G_2$ or $G_4$,
which implies that the subgraph induced by $V(H_i) \cup \{u,v,w,x_2,x_4,y_1\}$
is isomorphic to $G_6$ or $G_9$, see Figure \ref{fig_c11}, a contradiction to Claim \ref{c3}.
\end{proof}
\begin{figure}[H] 
 \centering\tiny
 \begin{tikzpicture}[scale = 1.2]
\draw (4.5,0) ellipse(4 and 0.35);
\node[label=left:\footnotesize$y_1$] (y1) at (4.5-2.1,0) {};
\node[label=right:\footnotesize$y_2$] (y2) at (4,0.95) {};

\draw (1.5,1.2) ellipse (1.5 and 0.35);

\foreach \i in {2,4} {
\node[label=left:\footnotesize$x_{\i}$] (x\i) at (\i/2+0.75,1.2) {};
	    }

\node[label=below:\footnotesize$x_1$] (x1) at (3.75,1.45) {};
\node[label=below:\footnotesize$x_3$] (x3) at (4.25,1.45) {};

\node[label=above:\footnotesize$u$] (u) at (2.25,2.5) {};
\node[label=left:\footnotesize$v$] (v) at (1.75,2) {};
\node[label=left:\footnotesize$w$] (w) at (2.75,2) {};
	    
\draw (4,1.2) circle (0.5);
\draw (6.55,1.2) circle (0.35);
\draw (8.65,1.2) circle (0.35);

\draw[-,dotted] (7.2,1.2) -- (8,1.2);

\pgftext[x=0.5cm,y=1.2cm] {$D_T^\prime$};
\pgftext[x=4.5cm,y=0] {$A_T$};
\pgftext[x=5cm,y=2cm] {$H_i$};
\draw[-latex] (5,1.9) to[out=-90,in=20] (4.6,1.2);
\pgftext[x=6.55cm,y=1.2cm] {$H_1$};
\pgftext[x=8.65cm,y=1.2cm] {$H_k$};

\draw[-] (u) -- (v);
\draw[-] (u) -- (w);
\draw[-] (v) -- (x1);
\draw[-] (v) -- (x2);
\draw[-] (w) -- (x3);
\draw[-] (w) -- (x4);

\draw[-] (y1) -- (x2);
\draw[-] (y1) -- (x4);
\draw[-] (y1) -- (y2);

\end{tikzpicture}
 \caption{An illustration of Claim \ref{c11}.} \label{fig_c11}
 \end{figure}
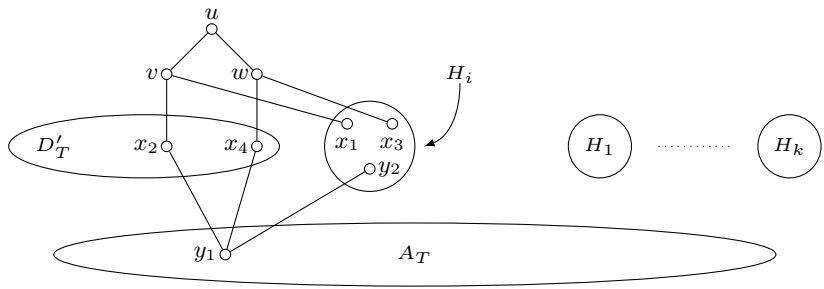 
\begin{claim} \label{c12}
 There is some $i \in [k]$ such that $X \subseteq V(H_i)$.
\end{claim}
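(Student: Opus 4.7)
The plan is to suppose, for a contradiction, that $|X\cap V(H_i)|=3$, where $i$ is the unique index supplied by Claims~\ref{c10} and~\ref{c11}, and to derive a uniquely restricted matching in $G'-X$ of size $\nu_{ur}(G')$, contradicting Claim~\ref{c7}. Without loss of generality, $x_1,x_2,x_3\in V(H_i)$ and $x_4\notin V(H_i)$. By Claim~\ref{c10}, $x_4$ lies in no $V(H_j)$, hence $x_4\in A_T\cup D_T'$; since every vertex of $A_T$ already has degree $3$ in $G'$, the edge $wx_4$ would force degree $\geq 4$ in $G$ if $x_4\in A_T$, so $x_4\in D_T'\subseteq D_T$. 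An analogous degree count shows that each $x_j$ with $j\in\{1,2,3\}$ must be a non-attachment degree-$2$ vertex of $H_i$, because any degree-$3$ vertex of $H_i$ or any attachment vertex has degree $3$ in $G'$, and the extra edge to $v$ or $w$ would violate subcubicity.

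Next I would split on the isomorphism type of $H_i\in\mathcal{B}_c$. In the first case, $H_i\in\{G_1,G_3\}$, so property~$(ii)$ of Table~\ref{table1} is available. Since $\{v_i,x_4\}\subseteq D_T$ has size two, Lemma~\ref{l2}$(i)$ produces a maximum matching $M$ of $T$, of size $\frac{n(T)-1}{3}$, covering neither $v_i$ nor $x_4$; let $M^\prime$ be the uniquely restricted matching in $G'$ corresponding to $M$, noting that $V(M^\prime)\cap V(H_i)=\emptyset$. For each $j\in[k]\setminus\{i\}$, $|V(M^\prime)\cap V(H_j)|\leq 1$, so Table~\ref{table1}$(i)$ yields a uniquely restricted matching $M_j$ in $H_j$ of size $\frac{n(H_j)-1}{3}$ disjoint from $M^\prime$. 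For $H_i$, the girth condition prevents any vertex of $V(H_i)\setminus\{x_1,x_2,x_3\}$ from having three neighbors in $\{x_1,x_2,x_3\}$, since such a vertex would share two neighbors (say $x_1,x_2$) with $v$ and close a $4$-cycle; Table~\ref{table1}$(ii)$ then supplies a uniquely restricted matching $M_i$ in $H_i$ of size $\frac{n(H_i)-1}{3}$ avoiding $\{x_1,x_2,x_3\}$. The union $M^\prime\cup\bigcup_{j=1}^k M_j$ is uniquely restricted in $G'-X$ and has size $\frac{n(T)-1}{3}+\sum_{j=1}^k\frac{n(H_j)-1}{3}=\frac{n(G')-1}{3}=\nu_{ur}(G')$, contradicting Claim~\ref{c7}.

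The second case, $H_i\in\{G_2,G_4\}$, is the step I expect to be the main subtlety, since property~$(ii)$ of Table~\ref{table1} fails there and the matching construction above collapses. The rescue is purely structural: each of $G_2$ and $G_4$ has exactly three vertices of degree $2$, so $\{x_1,x_2,x_3\}$ already exhausts the degree-$2$ vertices of $H_i$; attachment edges from $A_T$ to $H_i$ can only land on degree-$2$ vertices, and these $x_j$ are non-attachment, so no edge of $G'$ joins $V(H_i)$ to $A_T$. But $x_4\in D_T'$ forces $n(T)\geq 2$, so $v_i$ has a neighbor in $A_T$ within $T$, which by the construction of $\mathcal{G}$ produces at least one attachment edge at $H_i$, a contradiction. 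Hence $|X\cap V(H_i)|\neq 3$, and combined with Claim~\ref{c11} this gives $X\subseteq V(H_i)$.
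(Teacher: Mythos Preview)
Your proof is correct and follows essentially the same approach as the paper's. Both arguments assume $|X\cap V(H_i)|=3$ with $x_4\in D_T'$, rule out $H_i\cong G_2,G_4$ by the same counting of degree-$2$ vertices (three non-attachment vertices $x_1,x_2,x_3$ plus at least one attachment vertex forced by $n(T)\geq 2$), and in the $G_1/G_3$ case use the girth condition to exclude a common neighbour of $\{x_1,x_2,x_3\}$ so that Table~\ref{table1}$(ii)$ applies. The only cosmetic difference is that the paper invokes Lemma~\ref{l6} on the contracted graph $G''=G'/H_i$ to obtain the matching avoiding $\{v_i,x_4\}$, whereas you go one level deeper and apply Lemma~\ref{l2}$(i)$ directly to the host tree $T$ and then assemble the block matchings $M_j$ by hand; this is exactly how Lemma~\ref{l6} is proved, so the two arguments are equivalent.
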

\begin{proof}
Suppose, for a contradiction, that $X \cap D_T^\prime \neq \emptyset$.
Claim \ref{c11} implies that $|X \cap V(H_i)| = 3$ 
for some $i \in \lbrack k \rbrack$.
Therefore, $|X \cap D_T^\prime| = 1$.
By symmetry, we may assume that $x_4 \in X \cap D_T^\prime$.
Since there is some vertex $y$ of degree $2$ in $H_i$
that is adjacent to some vertex in $A_T$,
it follows that $H_i$ is not isomorphic to $G_2$ or $G_4$
since $y \not\in X$. Therefore, 
we may assume that $H_i$ is isomorphic to $G_1$ or $G_3$.
By Lemma \ref{l6}, the graph $G^{\prime\prime}$ that arises
from $G^\prime$ by contracting the edges
of $H_i$ contains a uniquely restricted matching $M^\prime$
of size $\frac{n(G^\prime)-n(H_i)}{3}$
that does not cover $v_i$ and $x_4$.
Furthermore, by the girth condition, 
the vertices in $X \cap V(H_i)$
do not have a common neighbor in $G^\prime$,
which implies that
$H_i$ contains a uniquely restricted matching $M_i$
of size $\frac{n(H_i)-1}{3}$ that does not 
cover the vertices in $X \cap V(H_i)$, see Table \ref{table1}.
Since the matching
$M^\prime \cup M_i$ is uniquely restricted in 
$G^\prime-X$, we obtain that 
\begin{align*}
 \nu_{ur}(G^\prime -X) \geq \frac{n(G^\prime) - n(H_i) + n(H_i) - 1}{3} = \nu_{ur}(G^\prime),
\end{align*}
which is a contradiction to Claim \ref{c7}.
\end{proof}
We are now in a position to complete the proof.
By Claim \ref{c12}, we may assume that $|X \cap V(H_i)| = 4$ 
for some $i \in \lbrack k \rbrack$.
Therefore, $H_i$ must be isomorphic
to $G_1$ or $G_3$.
First, we assume that $H_i$ is isomorphic to $G_1$.
Using the names specified in the left of Figure \ref{fig_cfinal},
we may, by symmetry, assume that $x_1=v_1$,
which, by the girth condition,
implies that $x_2 = v_6$.
Hence, $w$ is either adjacent to $v_3$ and $v_5$ 
or to $v_2$ and $v_4$. In both cases,
the subgraph induced by $V(H_i) \cup \{u,v,w\}$
is isomorphic to $G_2$, which implies that $G \in \mathcal{G}$,
a contradiction.

Hence, we may assume that $H_i$
is isomorphic to $G_3$.
Using the names specified in the right of Figure \ref{fig_cfinal},
we may, by symmetry, assume that $x_1=v_6$.
By symmetry between $v_2$ and $v_4$,
we may, by the girth condition, assume that $x_3 = v_4$.
By Table \ref{table1} and Claim \ref{c7}, we may assume that $H_i$ contains a vertex
$y_3$ with neighbors $y_1$ and $y_2$ such that 
$X \subseteq N_{H_i}(\{y_1,y_2\})$.
By symmetry and the girth condition, we may assume that
$y_1$ has neighbors
$x_1$ and $x_3$ and that $y_2$ has neighbors $x_2$ and $x_4$.
Therefore, $y_1 = v_5$, which implies that $y_3 = w_3$.
Since $x_2$ and $x_4$ have degree $2$ in $H_i$,
it follows that $y_2 \neq z$, that is, $y_2 = u_5$.
Thus, $\{x_2,x_4\} = \{u_4,u_6\}$.
If $x_2 = u_6$, then $x_4 = u_4$, and 
the subgraph induced by $V(H_i) \cup \{u,v,w\}$ is isomorphic to $G_4$,
which implies that $G \in \mathcal{G}$.
Hence, we may assume that $x_2 = u_4$, which implies that $x_4 = u_6$.
Therefore, the subgraph induced by $V(H_i) \cup \{u,v,w\}$
is isomorphic to $G_7$, which is a contradiction to Claim \ref{c3}.
\end{proof}

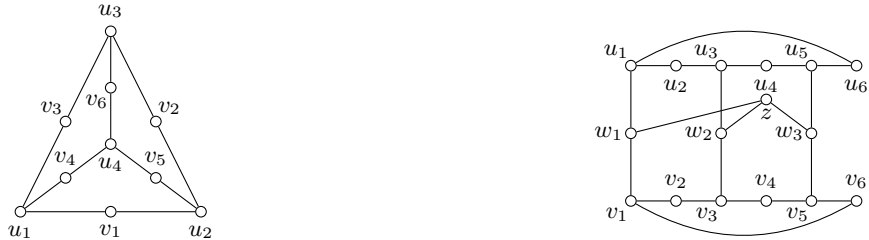
\begin{figure}[H] 
 \begin{minipage}[b]{0.49\textwidth}
 \centering\tiny
\begin{tikzpicture}[scale = 1.2]
	    \node[label=below:\footnotesize$u_1$]  (u1) at (0,0) {};
	    \node[label=below:\footnotesize$u_2$]   (u2) at (2,0) {};
	    \node[label=above:\footnotesize$u_3$]   (u3) at (1,2) {};
	    \node[label=below:\footnotesize$u_4$]   (u4) at (1,0.75) {};
	    \node[label=below:\footnotesize$v_1$]   (v1) at (1,0) {};
	    \node[label=above right:\footnotesize$v_2$]   (v2) at (1.5,1) {};
	    \node[label=above left:\footnotesize$v_3$]   (v3) at (0.5,1) {};
	    \node[label=above:\footnotesize$v_4$]   (v4) at (0.5,0.375) {};
	    \node[label=above:\footnotesize$v_5$]   (v5) at (1.5,0.375) {};
	    \node[label=below left:\footnotesize$v_6$]   (v6) at (1,1.375) {};
	    
	    \foreach \from/\to in {u1/v1, v1/u2, v2/u2, v2/u3, v3/u1, v3/u3, v4/u1, v4/u4, v5/u2, v5/u4, v6/u3, v6/u4}
	    \draw [-] (\from) -- (\to);
\end{tikzpicture}
 \end{minipage}
\begin{minipage}[b]{0.49\textwidth}
 \centering\tiny
\begin{tikzpicture}[scale = 1.2]

	    \foreach \i in {2,4,6} {
	    \node[label=above:\footnotesize$v_{\i}$]  (v\i) at (-0.5+\i/2,0.25) {};	    
	    }
	    \foreach \i in {1,3,5} {
	    \node[label=below left:\footnotesize$v_{\i}$]  (v\i) at (-0.5+\i/2,0.25) {};	    
	    }
	    
	    \foreach \i in {2,4,6} {
	    \node[label=below:\footnotesize$u_{\i}$]  (u\i) at (-0.5+\i/2,1.75) {};	    
	    }

	     \foreach \i in {1,3,5} {
	    \node[label=above left:\footnotesize$u_{\i}$]  (u\i) at (-0.5+\i/2,1.75) {};	    
	    }
	    
	    \node[label=left:\footnotesize$w_{1}$] (w0) at (0,1) {};
	    \node[label=left:\footnotesize$w_{2}$] (w1) at (1,1) {};
	    \node[label=left:\footnotesize$w_{3}$] (w2) at (2,1) {};

	    \foreach \i/\j in {0/1,1/3,2/5} {
	    \draw[-] (w\i) -- (v\j);
	    \draw[-] (w\i) -- (u\j);
	    }

	    \foreach \i/\j in {1/2, 2/3, 3/4, 4/5, 5/6} {
	    \draw[-] (v\i) -- (v\j);
	    \draw[-] (u\i) -- (u\j);
	    }
	    
	    \node[label=below:\footnotesize$z$] (z) at (1.5,1.375) {};
	    \foreach \i in {0,1,2} 
	    \draw[-] (z) -- (w\i);
	    
	    \draw[-] (v1) to[out=-30,in=-150] (v6);
	    \draw[-] (u1) to[out=30,in=150] (u6);
	    
\end{tikzpicture}
\end{minipage}
\caption{An illustration of the final contradiction.} \label{fig_cfinal}
\end{figure}

\end{document}